\newcommand{\bl}[1]{\textcolor{blue}{#1}}
\newcommand{\red}[1]{\textcolor{red}{#1}}
\newcommand{\gr}[1]{\textcolor{green}{#1}}
\definecolor{mypurple}{rgb}{.4,.0,.5}
\def\w{{\bf w}}
\def\y{{\bf y}}
\def\x{{\bf x}}
\def\x{{\mathbf x}}
\def\w{{\bf w}}
\def\x{{\bf x}}
\def\y{{\bf y}}
\def\h{{\bf h}}
\def\be{\begin{equation}}
\def\ee{\end{equation}}
\def\ba{\left[\begin{array}}
\def\ea{\end{array}\right]}
\def\w{{\bf w}}
\def\x{{\bf x}}
\def\y{{\bf y}}
\def\1{{\bf 1}}
\def\g{{\bf g}}
\def\0{{\bf 0}}
\def\erfinv{\mbox{erfinv}}
\def\erf{\mbox{erf}}
\def\erfc{\mbox{erfc}}
\def\erfinv{\mbox{erfinv}}
\def\Sw{S_w}
\def\hw{\bar{\h}}
\def\Sw{S_w}
\def\mR{{\mathbb R}}
\def\psiint{\Psi_{int}}
\def\psiext{\Psi_{ext}}
\def\psicom{\Psi_{com}}
\def\lp{\left (}
\def\rp{\right )}
\newtheorem{theorem}{Theorem}
\begin{document}

\begin{singlespace}

\title {Partial $\ell_1$ optimization in random linear systems -- phase transitions and large deviations 
}
\author{
\textsc{Mihailo Stojnic
\footnote{e-mail: {\tt flatoyer@gmail.com}} }}
\date{}
\maketitle

\centerline{{\bf Abstract}} \vspace*{0.1in}

$\ell_1$ optimization is a well known heuristic often employed for solving various forms of sparse linear problems. In this paper we look at its a variant that we refer to as the \emph{partial} $\ell_1$ and discuss its mathematical properties when used for solving linear under-determined systems of equations. We will focus on large random systems and discuss the phase transition (PT) phenomena and how they connect to the large deviation principles (LDP). Using a variety of probabilistic and geometric techniques that we have developed in recent years we will first present general guidelines that conceptually fully characterize both, the PTs and the LDPs. After that we will put an emphasis on providing a collection of explicit analytical solutions to all of the underlying mathematical problems. As a nice bonus to the developed concepts, the forms of the analytical solutions will, in our view,  turn out to be fairly elegant as well.

\vspace*{0.25in} \noindent {\bf Index Terms: Phase transitions;
linear systems of equations; partial $\ell_1$; large deviations}.

\end{singlespace}

\section{Introduction}
\label{sec:back}

Over last several decades, studying various properties of the $\ell_1$ heuristic has been the subject of great interest in a variety of scientific communities. We would think that there are two main reasons for that: 1) its excellent performance characteristics and 2) the breakthrough results of \cite{CRT,Donoho06CS} that for the first time provided fully mathematically rigorous results that on a qualitative level were accurately describing/emphasizing $\ell_1$'s performance abilities. Of course, the excellent performance characteristics have been known for a long time (in fact, for at least a half of the last century) and as such have served as cornerstones supporting further $\ell_1$'s developments, adaptations, and applications in a variety of different fields. From a mathematical purist standpoint, these were typically on a heuristic level. Although there were quite a few theoretical results in earlier decades as well, they typically weren't capturing what mathematicians would consider the key $\ell_1$ properties. These would eventually be proven for the first time in a statistical context in \cite{CRT,Donoho06CS}, and, in our view, it is with these works that a new era in studying of mathematical aspects of $\ell_1$ basically jump-started.

Nowdays of course the mathematical side of the $\ell_1$ story substantially expanded as well and one might even say that it is pretty much catching up with the diversity of the heuristic applications. In this paper we provide a contribution along these lines as well. Namely, we will study a particular form of $\ell_1$ that has been thought of for some time now as a strategy potentially offering an algorithmic path for eventually improving over the standard $\ell_1$. Before we introduce the form that we will study and say a few words what of its aspects we will specifically focus on we will take a few moments to put everything on a right mathematical track and introduce the standard linear systems setup that we will use as a benchmark to present our results. We should also mention that these problems are well known and we will occasionally assume a high degree of familiarity with many of the concepts that we will associate with them (the interested reader though can get a bit more detailed introduction to many of these in a long line of our works initiated in \cite{StojnicCSetamBlock09,StojnicUpperBlock10,StojnicICASSP09block,StojnicJSTSP09,StojnicCSetam09,StojnicICASSP10knownsupp,StojnicICASSP10var,StojnicICASSP10block,StojnicUpper10}; also, we will try to maintain as much of a parallelism with some of these works so that the transition and reading are rather smooth).

The standard linear systems assume an $m\times n$ ($m\leq n$) system matrix $A$ and an $n$ dimensional vector $\tilde{\x}$ with real entries (for short we write $A\in \mR^{m\times n}$ and $\tilde{\x}\in \mR^{n}$). Then the standard matrix-vector multiplication of $A$ and $\tilde{\x}$ gives
\begin{equation}
\y=A\tilde{\x}. \label{eq:defy}
\end{equation}
The key problem in studying linear inverse problems is finding $\tilde{\x}$ if $A$ and $\y$ in (\ref{eq:defy}) are given. By the formation of $\y$ such an $\tilde{\x}$ obviously exists. How many of them are there though is a whole another story. Namely, if $m=n$ and $A$ is full rank there is only one $\tilde{\x}$ such that (\ref{eq:defy}) holds. If $m<n$ and $A$ is full rank things become a bit more interesting. In fact, the possibility to have more than one $\tilde{\x}$ that satisfies (\ref{eq:defy}) would make finding a particular one as a typically not well posed problem. Of course, there is a catch and it is hidden in an additional structuring of $\tilde{\x}$ that makes the above problem typically well posed and ultimately of interest in many applications. The type of the structure that is typically of interest when the above mentioned $\ell_1$ optimization is involved (and that will also be assumed throughout this paper) is the so-called sparsity of $\tilde{\x}$.  In such scenarios, instead of finding just an $\tilde{\x}$ in (\ref{eq:defy}) one  focuses on solving the following problem
\begin{eqnarray}
\mbox{min} & & \|\x\|_{0}\nonumber \\
\mbox{subject to} & & A\x=\y, \label{eq:l0}
\end{eqnarray}
where $\|\x\|_{0}$ is what is typically called the $\ell_0$ (quasi) norm of vector $\x$  ($\|\x\|_{0}$ is, mathematically speaking, clearly not a norm; however, we will use the norm notation/terminology while essentially thinking of it as being the number of the nonzero entries of $\x$). In a nutshell, (\ref{eq:l0}) will find the so-called sparsest $\x$ that satisfies the constraints in (\ref{eq:l0}) (of course, from this point on the assumption will always be that there is at least one such $\x$). To mathematically deal with the sparsity we will introduce the $k$-sparse vectors as vectors that have no more than $k$ nonzero elements. An easy algebraic exercise then shows that for $k<m/2$ the solution of (\ref{eq:l0}) is in fact unique; moreover, we will also additionally assume that there is no $\x$ that satisfies constraints of (\ref{eq:l0}) that is less than $k$ sparse.


Solving (\ref{eq:l0}) is not so easy in general. The straightforward way of exhaustively selecting all subsets of $k$ columns and then solving the resulting over-determined systems is in the so-called linear regime (which will eventually be of our interest in this paper and which assumes $k=\beta n$, $m=\alpha n$, $n$ is large, and $\alpha$ and $\beta$ are constant independent of $n$) of exponential complexity in $n$. We will view such a complexity as too high and instead will focus on heuristics of polynomial complexity. Among many successful ones developed over last several decades (see, e.g. \cite{JATGomp,NeVe07,DTDSomp,NT08,DaiMil08,DonMalMon09}) one that supremely stands out in our view is the following $\ell_1$ relaxation of (\ref{eq:l0})
\begin{eqnarray}
\mbox{min} & & \|\x\|_{1}\nonumber \\
\mbox{subject to} & & A\x=\y. \label{eq:l1}
\end{eqnarray}
The above $\ell_1$ optimization is of course a linear program relatively easily solvable in polynomial time. In fact, if one considers the following object $\|\x\|_q=(\sum_{i=1}^{n}|\x_i|_q)^{\frac{1}{q}}$ as $q$ increases starting from (close to) zero then one observes that $q=1$ is the first case when $\|\x\|_q$ becomes a convex function and if used instead of $\|\x\|_0$ in (\ref{eq:l0}) would make (\ref{eq:l0}) a convex optimization problem solvable in polynomial time (in fact for $q=1$ the problem is actually linear).

As mentioned earlier, there are many reasons for $\ell_1$'s popularity but two of them are in our view the most important. The practical applicability due to its polynomial complexity and excellent solving abilities and the existence of rigorous mathematical results that confirm such performance characteristics. While the practical applicability has been known for quite some time the analytical progress flourished over the last decade. Performance characterizations started in \cite{CRT,Donoho06CS} and perfected in \cite{DonohoPol,DonohoUnsigned,StojnicCSetam09,StojnicUpper10} mathematically solidified the importance of (\ref{eq:l1}) in studying the linear under-determined systems with structured solutions.

In this paper we will try to move things a step further and consider a particular modification of the standard $\ell_1$ that has been viewed as a path towards potential improving on $\ell_1$ ($\ell_1$ though continues to be basically a golden standard when it comes to solving (\ref{eq:l0}) in polynomial time; more on this can be found in e.g. \cite{StojnicReDirChall13}). Such a modification will assume a bit of a feedback which the standard $\ell_1$ from (\ref{eq:l1}) does not use. Namely, we will assume that one can beforehand determine/guess the location of a fraction of $\x$'s support (as usual, under support of a vector we assume the location of its nonzero components; for vector $\x$ for example we denote its support as $supp(\x)$). If one recalls that what makes the problem in (\ref{eq:l0}) hard is determining the location of the nonzero components of $\x$ ($supp(\x)$) then one naturally expects that knowing beforehand some of these locations should be beneficial. This was indeed rigorously shown to be true in \cite{StojnicTowBettCompSens13,StojnicICASSP10knownsupp}. Before discussing what exactly was actually shown in \cite{StojnicTowBettCompSens13,StojnicICASSP10knownsupp}, we will say a few words about the rationale for feedback considerations. Additionally, to ensure that everything continues to be on a right mathematical track we will also, before switching to the following section and the discussion about the benefits of knowing parts of $supp(\x)$, introduce formal mathematical definitions for the objects that we have just mentioned.

Adding some knowledge to improve the $\ell_1$'s performance is actually natural. Namely, even when one fails to solve (\ref{eq:l0}) (through any algorithm including $\ell_1$ itself) it is reasonable to believe that at least some of $supp(\x)$ elements will be correctly identified. On its own this may not be of much use as one will not be able to recover unknown $\x$ unless the entire $supp(\x)$ is correctly determined. However, if it is put back in use it may be beneficial in designing/upgrading recovery algorithms. Of course one would be tempted to believe that the larger the correctly identifiable portion of $supp(\x)$ the higher the chance that such information will be useful if reused in the recovery process (among other things \cite{StojnicTowBettCompSens13,StojnicICASSP10knownsupp} precisely quantified the dependency between the size of the a priori known portion of $supp(\x)$ and the recoverable sparsity). Of course, things are not as simple as they may sound and we leave a more complete discussion to \cite{StojnicTowBettCompSens13}. Here we just add, that it is typically very important which fraction of $supp(\x)$ is used as a priori known and in general it is not an easy task to determine those fractions that are the most relevant. It seems though, that it could still be a bit easier task than actually solving the original problem (\ref{eq:l0}); it remains though to be seen if that indeed is the case. We should also say that the above is related to a potential improvement over $\ell_1$. There are scenarios that inherently allow for knowing a portion of $supp(\x)$. In such cases studying effects that a priori known portion of $supp(\x)$ will have on the recovery algorithms is of course of independent interest and should not necessarily be viewed through the prism of the improvement over $\ell_1$.

Now, going back to aligning everything so that it is on a right mathematical track, we start by formally introducing vectors with \emph{partially} known support, (more on this type of vectors as well as on their potential applications can be found in e.g. \cite{VasLu09}).
As mentioned above, under partially known support we assume that \emph{locations} of a fraction of the non-zero components of $\x$ are \emph{a priori} known and that that knowledge can possibly be exploited in solving (\ref{eq:l0}) (basically incorporated in the design of the recovery algorithms used for solving (\ref{eq:l0})). To make everything precise, we will denote by $\Pi$ the set of the indexes of the known locations of the non-zero components of $\x$. We will assume that the cardinality of $\Pi$ is $\eta k$ (where $\eta$ is a constant independent of $n$ and $0\leq \eta\leq 1$). To recover $k$-sparse $\x$ with partially known support one can then design the algorithms using the available knowledge of $\Pi$. Among many ways how one can do so, here, we focus on a particular modification of (\ref{eq:l1}) considered in e.g. \cite{VasLu09,SteChr09,StojnicICASSP10knownsupp} that assumes solving
\begin{eqnarray}
\mbox{min} & & \sum_{i\notin\Pi} |\x_i|\nonumber \\
\mbox{subject to} & & A\x=\y. \label{eq:l1imp}
\end{eqnarray}
The above problem, to which we will refer as \emph{partial} $\ell_1$, is fairly similar to the standard $\ell_1$ from (\ref{eq:l1}) and it is perhaps a bit surprising that it can offer much more than the standard $\ell_1$. \cite{StojnicTowBettCompSens13,StojnicICASSP10knownsupp} showed that it actually can
and in a statistical context these results in fact precisely quantified by how much the algorithm from (\ref{eq:l1imp}) improves on its a counterpart from (\ref{eq:l1}). As mentioned above, we will in the following sections in detail recall on the results from \cite{StojnicTowBettCompSens13,StojnicICASSP10knownsupp}. Here we give a little bit of a flavor as to what was done in \cite{StojnicTowBettCompSens13,StojnicICASSP10knownsupp} and what we will do here. Namely, the results of \cite{StojnicTowBettCompSens13,StojnicICASSP10knownsupp} relate to the so-called phase-transition (PT) phenomena (precisely the same phenomena that  \cite{DonohoPol,DonohoUnsigned,StojnicCSetam09,StojnicUpper10} uncovered for the standard $\ell_1$ from (\ref{eq:l1}) when utilized in statistical contexts). Similarly to what was done in both sets of results, \cite{DonohoPol,DonohoUnsigned} and \cite{StojnicCSetam09,StojnicUpper10}, for the standard $\ell_1$, we in \cite{StojnicTowBettCompSens13,StojnicICASSP10knownsupp}, in addition to uncovering the existence of the phase transition phenomenon for the partial $\ell_1$ from (\ref{eq:l1imp}), precisely characterized the so-called ``breaking points" where these phase transitions happen (essentially the highest possible $\beta$ for which the solution of (\ref{eq:l1imp}) with overwhelming probability matches the sparsest solution of (\ref{eq:l0}) for a fixed $\alpha$; under overwhelming probability we will in this paper consider probability over statistics of $A$ that is no more than a number exponentially decaying in $n$ away from $1$). Here though, we will make several key steps that will help us substantially deepen our understanding of the underlying phase transitions. As in \cite{Stojnicl1RegPosasymldp}, we will essentially connect the phase transitions to the so-called \emph{large deviations property/principle} (LDP) from the classical probability theory and provide their explicit characterizations when viewed through such a prism. We will do so for two types of partial $\ell_1$ (the one from (\ref{eq:l1imp}) and another one, essentially, potentially a more realistic variant of (\ref{eq:l1imp}), that we will introduce later on). Following further into the footsteps of \cite{Stojnicl1RegPosasymldp}, we will do so through two seemingly different approaches, one that is purely probabilistic and another one that relies on high-dimensional geometry.

The paper presentation will be split into several sections. We will start by discussing the standard phase transitions and in what form they appear in the analysis of the partial $\ell_1$. After that we will move to the LDP characterizations and their connections with PTs. In the later sections of the paper we will show how the PT and LDP results that we will create for the partial $\ell_1$ can be modified so that they fit a more realistic form of the partial $\ell_1$ that we will introduce later on and to which we will refer as the \emph{hidden partial} $\ell_1$. Finally, we will attempt to maintain the level of the presentation so that the final results eventually approach, if not match, the elegance of the corresponding ones from \cite{StojnicCSetam09,Stojnicl1RegPosasymldp,StojnicCSetamBlock09}.

\section{Partial $\ell_1$ -- phase transitions}
\label{sec:phasetrans}

In this section we will revisit the phase transitions (PTs) of the partial $\ell_1$. We start by recalling what PTs are when viewed in the context of the partial $\ell_1$. Informally speaking, they of course refer to the relations between systems dimensions and sparsity so that the partial $\ell_1$ (i.e. (\ref{eq:l1imp})) produces or fails to produce the solution of (\ref{eq:l0}). To put this in a proper mathematical context, we say that for any given constants $0< \alpha\leq 1$ and $0\leq \eta\leq 1$ and \emph{any} given $\x$ with a given fixed location and a given fixed set of signs
there will be a maximum allowable value of $\beta$ such that
(\ref{eq:l1imp}) (with $\Pi$ from the given fixed locations) finds that given $\x$ with overwhelming
probability. We will refer to this maximum allowable value of
$\beta$ as the \emph{weak threshold/breaking point} and will denote it by $\beta_{w}$ (see, e.g. \cite{StojnicICASSP09,StojnicCSetam09,StojnicTowBettCompSens13,StojnicICASSP10knownsupp,Stojnicl1RegPosasymldp}). Correspondingly, we also say that the algorithm exhibits the \emph{weak} phase transition (i.e. \emph{weak} PT) and we call the resulting curve in the $(\alpha,\beta)$ plane the \emph{weak} phase transition curve (\emph{weak} PT curve). Now, in a more mathematical language, the phase transition phenomenon essentially means that if the problem dimensions are such that the pair $(\alpha,\beta)$ is below the PT curve then the algorithm (here (\ref{eq:l1imp})) solves (in a probabilistic sense) the problem (\ref{eq:l0}); otherwise it fails. A full asymptotic performance characterization of an algorithm that exhibits the phase transition phenomenon assumes determining this phase transition curve. We should mentioned that in addition to the weak thresholds, PTs, and PT curves, one can define various other forms of phase transitions. As our main concern here will be the above introduced weak PT forms we stop short of discussing the other ones in greater details and instead mention in passing that more on them can be found in e.g. \cite{DonohoPol,StojnicCSetam09,StojnicUpper10,StojnicUpperSec13,StojnicLiftStrSec13,Stojnicl1RegPosasymldp}.

When it comes to the standard $\ell_1$ (i.e. the algorithm from (\ref{eq:l1})), there is a large body of work that deals with various aspects of its PTs. As mentioned earlier, the mathematical studying jump-started with the initial, qualitative characterizations that appeared in \cite{CRT,Donoho06CS}. These results were later on substantially improved, eventually reaching the ultimate level of exact PT characterizations achieved in \cite{DonohoPol,DonohoUnsigned,StojnicCSetam09,StojnicUpper10}. \cite{DonohoPol,DonohoUnsigned} did so through establishing a connection between the (\ref{eq:l1})'s PT properties and studying of neighbourly polytopes in high-dimensional geometry, while our own series of work \cite{StojnicCSetam09,StojnicUpper10} did so by developing a novel purely probabilistic approach. On the other hand, when it comes to the partial $\ell_1$ (i.e. the algorithm from (\ref{eq:l1imp})), a bit less work has been done. Nonetheless, the achieved results are equally successful. After initial characterizations of \cite{VasLu09}, we eventually in \cite{StojnicICASSP10knownsupp,StojnicTowBettCompSens13} fully characterized the partial $\ell_1$'s PT. We below recall on a theorem that essentially summarizes the results obtained in \cite{StojnicICASSP10knownsupp,StojnicTowBettCompSens13} and effectively establishes for any $0\leq \eta\leq 1$ and any $0<\alpha\leq 1$ the exact value of $\beta_w$ for which (\ref{eq:l1}) finds the $k$-sparse solution of (\ref{eq:l0}) with a priori known portion of its support of size $\eta k$.


\begin{theorem}(\cite{StojnicICASSP10knownsupp,StojnicTowBettCompSens13} Exact partial $\ell_1$'s weak threshold/PT)
Let $A$ be an $m\times n$ matrix in (\ref{eq:l0})
with i.i.d. standard normal components. Let
the unknown $\x$ that solves (\ref{eq:l0}) be $k$-sparse. Further, let the location and signs of nonzero elements of $\x$ be arbitrarily chosen but fixed. Assume that the location of $\eta k$ ($0\leq \eta\leq 1$) of non-zero elements is arbitrarily chosen, fixed, and a priori known, and let $\Pi$ be the set of those locations.
Let $k,m,n$ be large
and let $\alpha_w=\frac{m}{n}$ and $\beta_w=\frac{k}{n}$ be constants
independent of $m$ and $n$. Let $\erfinv$ be the inverse of the standard error function associated with zero-mean unit variance Gaussian random variable.  Further, let $\alpha_w$ and $\beta_w$ satisfy the following \textbf{fundamental characterization of the \emph{partial} $\ell_1$'s PT}

\begin{center}
\shadowbox{$
\xi^{(p)}_{\alpha_{w},\eta}(\beta_w)\triangleq\psi^{(p)}_{\beta_w,\eta}(\alpha_{w})\triangleq
\frac{(1-\beta_w)\sqrt{\frac{2}{\pi}}e^{-\lp\erfinv\lp\frac{1-\alpha_w}{1-\beta_w}\rp\rp^2}}{(\alpha_w-\eta\beta_w)\sqrt{2}\erfinv \lp\frac{1-\alpha_w}{1-\beta_w}\rp}=1.
$}
-\vspace{-.5in}\begin{equation}
\label{eq:thmweaktheta2}
\end{equation}
\end{center}

Then:
\begin{enumerate}
\item If $\alpha>\alpha_w$ then with overwhelming probability the solution of (\ref{eq:l1imp}) is the $k$-sparse $\x$ that solves (\ref{eq:l0}).
\item If $\alpha<\alpha_w$ then with overwhelming probability there will be a $k$-sparse $\x$ (from a set of $\x$'s with fixed locations and signs of nonzero components) with a priori known set of locations $\Pi$ that is the solution of (\ref{eq:l0}) and is \textbf{not} the solution of (\ref{eq:l1imp}).
    \end{enumerate}
\label{thm:thmweakthr}
\end{theorem}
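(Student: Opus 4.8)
The plan is to establish both items simultaneously through the null-space/descent-cone reduction together with a Gaussian comparison argument, along the lines of the probabilistic machinery of \cite{StojnicCSetam09,StojnicUpper10,StojnicICASSP10knownsupp}. First I would reduce the statement to a geometric one: the fixed $k$-sparse $\tilde{\x}$ is the unique solution of (\ref{eq:l1imp}) if and only if the cone of feasible descent directions of $f(\x)=\sum_{i\notin\Pi}|x_i|$ at $\tilde{\x}$ intersects $\mathrm{null}(A)$ only at the origin. Since $\Pi\subseteq supp(\tilde{\x})$ and the signs are fixed, the directional derivative is $f'(\tilde{\x};\w)=\sum_{i\in supp(\tilde{\x})\setminus\Pi}\mathrm{sign}(\tilde{\x}_i)\w_i+\sum_{i\notin supp(\tilde{\x})}|\w_i|$, so the descent cone is
\[
\mathcal{D}=\Big\{\w\in\mR^n:\ \sum_{i\in supp(\tilde{\x})\setminus\Pi}\mathrm{sign}(\tilde{\x}_i)\w_i+\sum_{i\notin supp(\tilde{\x})}|\w_i|\le 0\Big\},
\]
i.e.\ the $\eta k$ a priori known coordinates are simply unconstrained. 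Because $A$ has i.i.d.\ Gaussian entries, $\mathrm{null}(A)$ is a uniformly random $(n-m)$-dimensional subspace and, by rotation invariance of its law, the recovery probability depends on $\tilde{\x}$ only through the cardinalities $|\Pi|=\eta\beta_w n$ and $|supp(\tilde{\x})|=\beta_w n$; in particular it is the same for every admissible location and sign pattern, which is exactly what the weak threshold asks for.

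Next I would convert the event $\mathcal{D}\cap\mathrm{null}(A)=\{\0\}$ into a comparison between $m$ and a Gaussian-width functional of $\mathcal{D}$. Writing
\[
\xi\ =\ \min\{\|A\w\|_2:\ \w\in\mathcal{D},\ \|\w\|_2=1\}\ =\ \min_{\w}\ \max_{\|\u\|_2\le1}\ \u^{T}A\w ,
\]
Gordon's comparison inequality lower-bounds $\xi$ by the associated auxiliary Gaussian process and gives $\xi\gtrsim \mE\|\g\|_2-w(\mathcal{D})$, where $\g\in\mR^{m}$ and $\h\in\mR^{n}$ are standard normal and $w(\mathcal{D})=\mE\big[\max_{\w\in\mathcal{D},\,\|\w\|_2=1}\h^{T}\w\big]$; the companion direction of the Gaussian min-max theorem, needed for item 2, shows that $\mathcal{D}$ meets $\mathrm{null}(A)$ beyond the origin — so recovery fails — once $m<\delta(\mathcal{D})(1-o(1))$. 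Hence recovery holds with overwhelming probability when $m>\delta(\mathcal{D})(1+o(1))$, with $\delta(\mathcal{D})\approx w(\mathcal{D})^2$ the statistical dimension of the descent cone. (The same $\alpha_w$ is reachable through the high-dimensional integral-geometry / neighborly-polytope route that \cite{DonohoPol,DonohoUnsigned} used for the standard $\ell_1$; I would record this as the second, purely geometric, derivation the paper advertises.)

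It then remains to evaluate $\delta(\mathcal{D})$ and simplify. Dualizing the polar description of $\mathcal{D}$ — whose generators are $0$ on $\Pi$, $\mathrm{sign}(\tilde{\x}_i)$ on $supp(\tilde{\x})\setminus\Pi$, and $[-1,1]$ off the support — reduces the width to a one-dimensional optimization over a threshold $\lambda\ge0$, and a routine concentration step (the relevant integrand is convex in $\lambda$, so the minimum and the expectation commute in the large-$n$ limit) yields
\[
\frac{\delta(\mathcal{D})}{n}\ \longrightarrow\ \min_{\lambda\ge0}\Big[\eta\beta_w+(1-\eta)\beta_w\,\mE[(G-\lambda)^2]+(1-\beta_w)\,\mE[(|G|-\lambda)_+^2]\Big],
\]
with $G$ standard normal. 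Setting $\alpha_w=\delta(\mathcal{D})/n$ and writing down the first-order optimality condition in $\lambda$ produces two scalar equations in $(\alpha_w,\beta_w,\lambda^{*})$; eliminating, and using $\erf(x)=2\Phi(x\sqrt2)-1$, one finds $\lambda^{*}=\sqrt{2}\,\erfinv\big(\tfrac{1-\alpha_w}{1-\beta_w}\big)$ together with $\lambda^{*}(\alpha_w-\eta\beta_w)=\sqrt{2/\pi}\,(1-\beta_w)e^{-(\lambda^{*})^2/2}$, which is precisely the fundamental characterization (\ref{eq:thmweaktheta2}). Combining this with the previous step delivers items 1 and 2.

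I expect the genuinely delicate point to be the Gaussian-comparison step rather than the algebra: it must be run in both directions (Gordon's inequality for achievability and its companion for the converse), the boundary case $\w\in\partial\mathcal{D}$ and the strictness of the descent inequality must be handled so that a degenerate direction cannot spoil uniqueness, and the conclusions must come with the claimed overwhelming-probability (exponentially small failure) bounds. Once that is in place, the width evaluation and the final simplification are essentially bookkeeping.
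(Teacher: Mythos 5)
Your proposal is correct and follows essentially the same route as the paper's source references \cite{StojnicICASSP10knownsupp,StojnicTowBettCompSens13}: the paper itself proves Theorem \ref{thm:thmweakthr} only by citation, but the machinery it recalls in Section \ref{sec:ldp} — the null-space condition of Theorem \ref{thm:thmknownsuppcond}, the set $\Sw$, and the width computation culminating in (\ref{eq:ldpwhSw}) — is exactly your descent-cone/Gordon-comparison argument, and your limiting width functional matches (\ref{eq:ldpwhSw}) term by term ($\eta\beta_w$ from the known support, $(1-\eta)\beta_w\,\mE(G-\lambda)^2$ from the unknown support, $(1-\beta_w)\,\mE(|G|-\lambda)_+^2$ off the support). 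The first-order condition you write down does reduce to (\ref{eq:thmweaktheta2}), so the remaining work is, as you say, the two-sided comparison and concentration bookkeeping already carried out in the cited works.
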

\begin{proof}
The first part was established in \cite{StojnicICASSP10knownsupp} and the second one was established in \cite{StojnicTowBettCompSens13}.
\end{proof}

\subsection{Partial $\ell_1$'s PT is unambiguous}
\label{sec:propxi}

In this subsection we will provide brief arguments that functions $\xi^{(p)}_{\alpha,\eta}(\beta)$ and $\psi^{(p)}_{\beta,\eta}(\alpha)$ utilized in Theorem \ref{thm:thmweakthr} in an unambiguous way establish the partial $\ell_1$'s PT. These functions are very similar to the corresponding ones in \cite{Stojnicl1RegPosasymldp} and can be analyzed in a similar way. Instead of repeating the entire analysis we will just highlight the parts that are different from what was presented in \cite{Stojnicl1RegPosasymldp}.

\subsubsection{$\xi^{(p)}_{\alpha,\eta}(\beta)$}
\label{sec:propxi1}

Similarly to what was done in \cite{Stojnicl1RegPosasymldp}, the key observation regarding $\xi^{(p)}_{\alpha,\eta}(\beta)$ is that for any fixed $\eta\in (0,1)$ and any fixed $\alpha\in (0,1)$ there is a unique $\beta$ such that $\xi^{(p)}_{\alpha,\eta}(\beta)=1$ which essentially ensures that (\ref{eq:thmweaktheta2}) is an unambiguous PT characterization. To confirm that this is indeed true we will first show that for any fixed $\eta\in (0,1)$ and any fixed $\alpha\in (0,1)$, $\xi^{(p)}_{\alpha,\eta}(\beta)-1$ is a decreasing function of $\beta$ on interval $[0,\alpha)$. Computing the derivative with respect to $\beta$ gives
\begin{eqnarray}\label{eq:propxi1}
  \frac{d(\xi^{(p)}_{\alpha,\eta}(\beta)-1)}{d\beta} & = & \frac{d\lp\frac{(1-\beta)\sqrt{\frac{2}{\pi}}e^{-\lp\erfinv\lp\frac{1-\alpha}{1-\beta}\rp\rp^2}}{(\alpha-\eta\beta)\sqrt{2}\erfinv \lp\frac{1-\alpha}{1-\beta}\rp}-1\rp}{d\beta}\nonumber\\
  & = & \sqrt{\frac{2}{\pi}}\frac{-\frac{\sqrt{\pi} (1-\alpha)}{(1-\beta) \erfinv((1-\alpha)/(1-\beta))^2}-\frac{2 e^{-\lp \erfinv\lp\frac{1-\alpha}{1-\beta}\rp\rp^2}}{\erfinv((1-\alpha)/(1-\beta))}-\frac{2\sqrt{\pi} (1-\alpha)}{1-\beta}}{2 \sqrt{2} (\alpha-\eta\beta)}\nonumber \\
  & & +\eta\lp\frac{(1-\beta)\sqrt{\frac{2}{\pi}}e^{-\lp\erfinv\lp\frac{1-\alpha}{1-\beta}\rp\rp^2}}{(\alpha-\eta\beta)^2\sqrt{2}\erfinv \lp\frac{1-\alpha}{1-\beta}\rp}\rp\nonumber \\
  & = & \sqrt{\frac{2}{\pi}}\frac{-\frac{\sqrt{\pi} (1-\alpha)}{(1-\beta) \erfinv((1-\alpha)/(1-\beta))^2}-\frac{2\lp 1-\frac{\eta(1-\beta)}{\alpha-\eta\beta}\rp e^{-\lp \erfinv\lp\frac{1-\alpha}{1-\beta}\rp\rp^2}}{\erfinv((1-\alpha)/(1-\beta))}-\frac{2\sqrt{\pi} (1-\alpha)}{1-\beta}}{2 \sqrt{2} (\alpha-\eta\beta)}\nonumber \\
  & \leq & \sqrt{\frac{2}{\pi}}\frac{-\frac{\sqrt{\pi} (1-\alpha)}{(1-\beta) \erfinv((1-\alpha)/(1-\beta))^2}+\frac{2\frac{1-\alpha}{\alpha-\beta} e^{-\lp \erfinv\lp\frac{1-\alpha}{1-\beta}\rp\rp^2}}{\erfinv((1-\alpha)/(1-\beta))}-\frac{2\sqrt{\pi} (1-\alpha)}{1-\beta}}{2 \sqrt{2} (\alpha-\eta\beta)}\nonumber \\
  & = & \sqrt{2}\frac{ (1-\alpha)}{1-\beta}\frac{-\frac{1}{ \erfinv((1-\alpha)/(1-\beta))^2}+\frac{2\frac{1-\beta}{\alpha-\beta} e^{-\lp \erfinv\lp\frac{1-\alpha}{1-\beta}\rp\rp^2}}{\sqrt{\pi}\erfinv((1-\alpha)/(1-\beta))}-2}{2 \sqrt{2} (\alpha-\eta\beta)}.\nonumber \\
\end{eqnarray}
We now recall on the following well known inequalities that relate to $\erfc(\cdot)$
\begin{equation}
\frac{2}{\sqrt{\pi}}\frac{e^{-y^2}}{y+\sqrt{y^2+2}}< \erfc(y)\leq \frac{2}{\sqrt{\pi}}\frac{e^{-y^2}}{y+\sqrt{y^2+\frac{4}{\pi}}}.
 \label{eq:propxi2}
\end{equation}
Setting
\begin{equation}\label{eq:propxi3}
  q=\erfinv\left (\frac{1-\alpha}{1-\beta}\right ),
\end{equation}
and utilizing the first of the inequalities in (\ref{eq:propxi2}) we have
\begin{equation}\label{eq:propxi4}
  \frac{2\frac{1-\beta}{\alpha-\beta} e^{-\lp \erfinv\lp\frac{1-\alpha}{1-\beta}\rp\rp^2}}{\sqrt{\pi}\erfinv((1-\alpha)/(1-\beta))}
  =\frac{2e^{-q^2}}{\sqrt{\pi}q\erfc(q)}<1+\sqrt{1+\frac{2}{q^2}}=1+\sqrt{1+\frac{2}{\lp \erfinv\left (\frac{1-\alpha}{1-\beta}\right )\rp ^2}}.
\end{equation}
A combination of (\ref{eq:propxi1}) and (\ref{eq:propxi4}) then gives
\begin{eqnarray}\label{eq:propxi5}
  \frac{d(\xi^{(p)}_{\alpha,\eta}(\beta)-1)}{d\beta}
  & \leq & \sqrt{2}\frac{ (1-\alpha)}{1-\beta}\frac{-\frac{1}{ \erfinv((1-\alpha)/(1-\beta))^2}+\frac{2\frac{1-\beta}{\alpha-\beta} e^{-\lp \erfinv\lp\frac{1-\alpha}{1-\beta}\rp\rp^2}}{\sqrt{\pi}\erfinv((1-\alpha)/(1-\beta))}-2}{2 \sqrt{2} (\alpha-\eta\beta)}\nonumber \\
& < & \sqrt{2}\frac{ (1-\alpha)}{1-\beta}\frac{-\frac{1}{ \erfinv((1-\alpha)/(1-\beta))^2}+
\sqrt{1+\frac{2}{\lp \erfinv\left (\frac{1-\alpha}{1-\beta}\right )\rp ^2}}-1}{2 \sqrt{2} (\alpha-\eta\beta)}.\nonumber \\
& < & 0.
\end{eqnarray}
In \cite{Stojnicl1RegPosasymldp} we showed that for any fixed $\alpha\in (0,1)$, $\lim_{\beta\rightarrow \alpha} (\alpha-\eta\beta)\xi^{(p)}_{\alpha,\eta}(\beta)-1=-1$ which then implies that for any fixed $\eta\in (0,1)$ and any fixed $\alpha\in (0,1)$  one also has $\lim_{\beta\rightarrow \alpha} \xi^{(p)}_{\alpha,\eta}(\beta)-1=-1$. Moreover, in \cite{Stojnicl1RegPosasymldp} we also showed that for any fixed $\alpha\in (0,1)$, $\xi^{(p)}_{\alpha,\eta}(0)-1>0$. Together with (\ref{eq:propxi4}) this is then enough to conclude that for any fixed $\eta\in (0,1)$ and any fixed $\alpha\in (0,1)$ there is a unique $\beta$ such that $\xi^{(p)}_{\alpha,\eta}(\beta)=1$, which as mentioned above essentially means that (\ref{eq:thmweaktheta2}) is an unambiguous PT characterization. For the completeness, in Figure \ref{fig:propxi} we present a few numerical results related to the behavior of $\xi^{(p)}_{\alpha,\eta}(\beta)$ that indeed confirm the above calculations.
\begin{figure}[htb]
\begin{minipage}[b]{.5\linewidth}
\centering
\centerline{\epsfig{figure=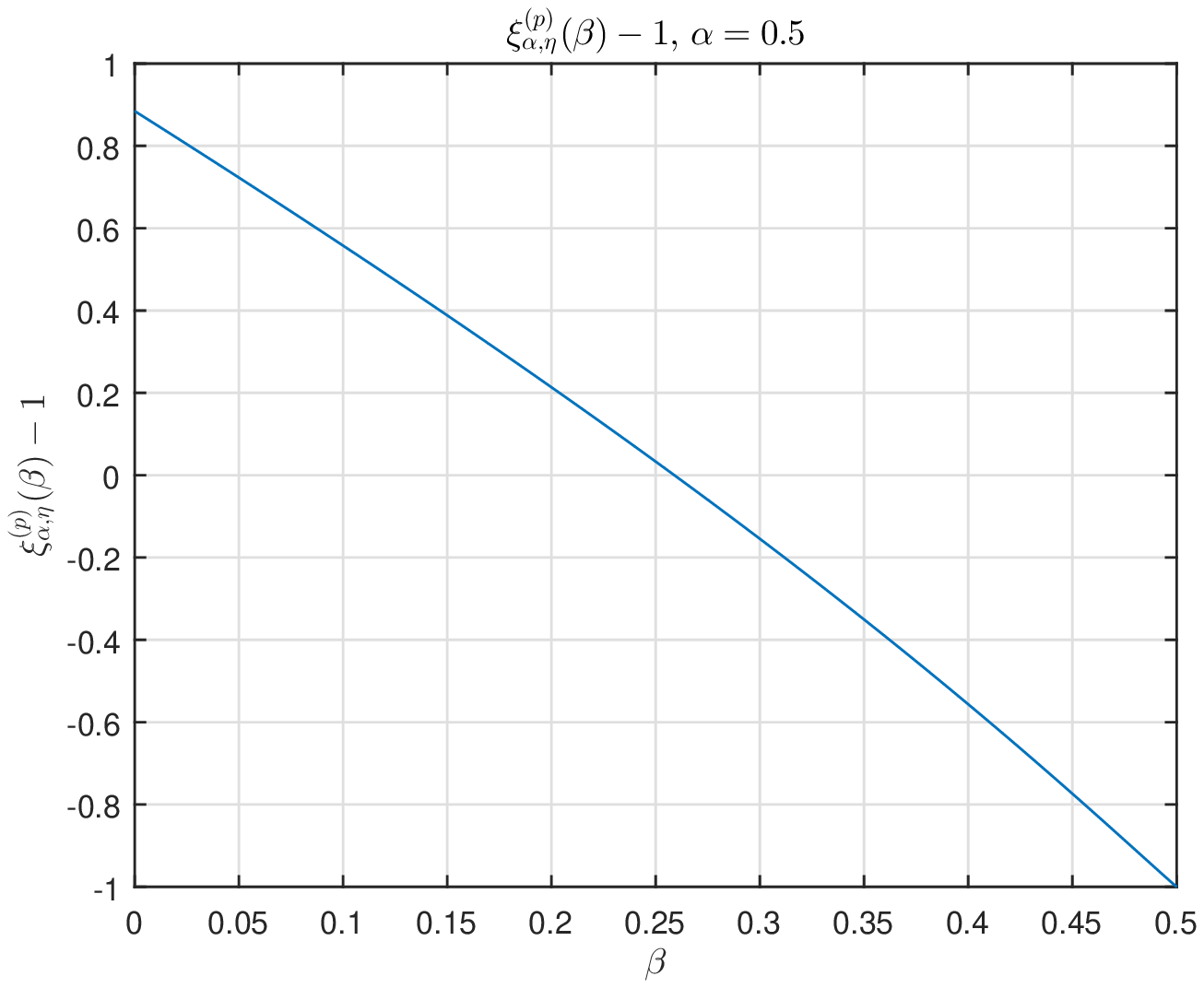,width=9cm,height=7cm}}
\end{minipage}
\begin{minipage}[b]{.5\linewidth}
\centering
\centerline{\epsfig{figure=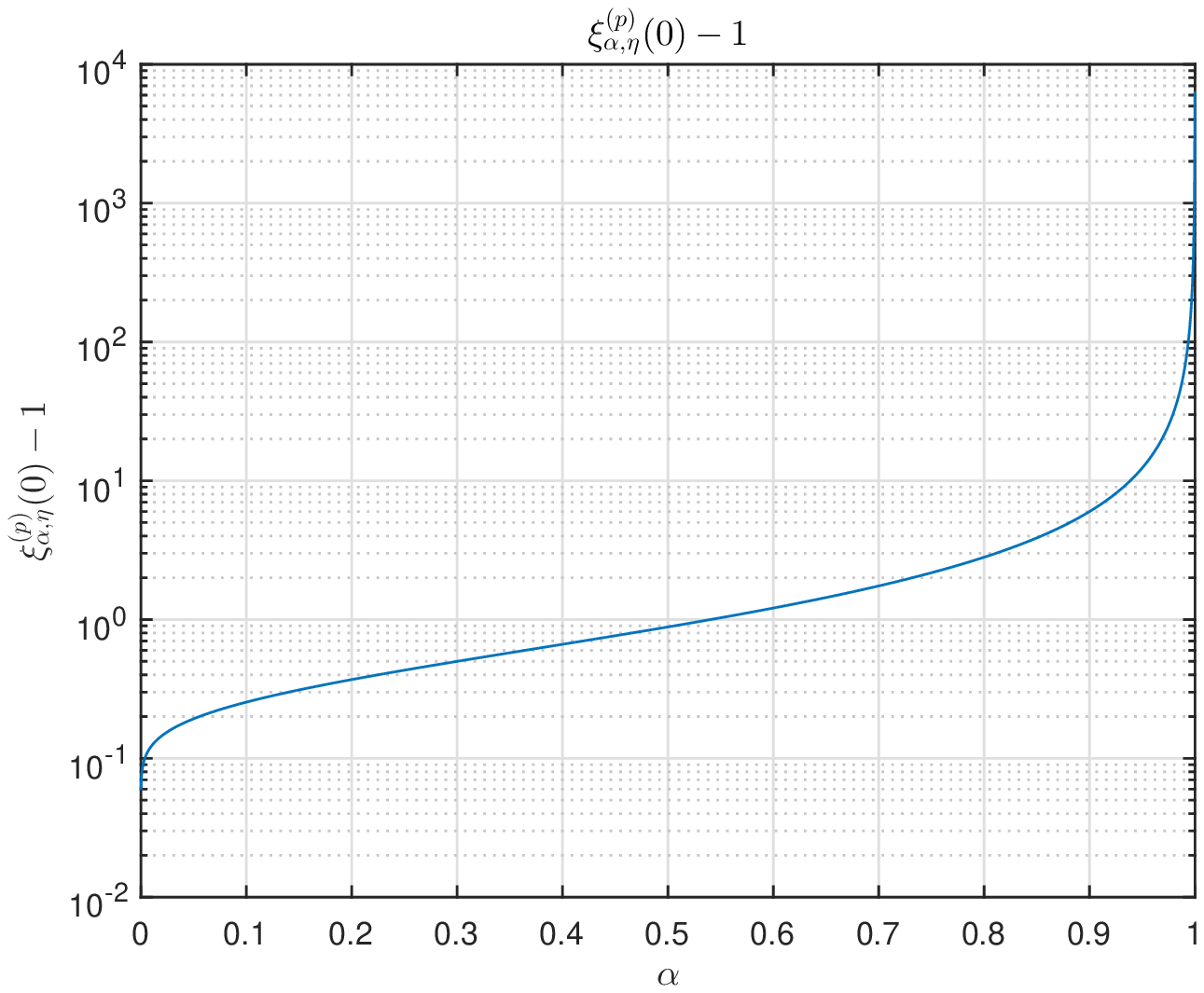,width=9cm,height=7cm}}
\end{minipage}
\caption{Properties of $\xi^{(p)}_{\alpha,\eta}(\beta)$: $(\xi^{(p)}_{\alpha,\eta}(\beta)-1)$ as a function of $\beta$ ($\alpha=0.5$, $\eta=0.5$) -- left; $(\xi^{(p)}_{\alpha,\eta}(0)-1)$ as a function of $\alpha$ ($\eta=0.5$) -- right}
\label{fig:propxi}
\end{figure}

\subsubsection{$\psi^{(p)}_{\beta,\eta}(\alpha)$}
\label{sec:proppsixi1}

In this subsection we will look at $\psi^{(p)}_{\beta,\eta}(\alpha)$ and show that the property proved for $\xi^{(p)}_{\alpha,\eta}(\beta)$ in the previous subsection holds for $\psi^{(p)}_{\beta,\eta}(\alpha)$ as well. Namely, for any fixed $\eta\in (0,1)$ and any fixed $\beta\in (0,1)$ there is a unique $\alpha$ such that $\psi^{(p)}_{\beta,\eta}(\alpha)=1$. This essentially ensures that the $\ell_1$'s fundamental PT from the above theorem is also unambiguous when viewed as a function of $\alpha$. To show this it is then enough to show that for any fixed $\eta\in (0,1)$ and any fixed $\beta\in (0,1)$, $\psi^{(p)}_{\beta,\eta}(\alpha)-1$ is an increasing function of $\alpha$ on interval $(\beta,1]$. We then proceed in a fashion similar to the one from Section \ref{sec:propxi1} and compute the derivative with respect to $\alpha$.
\begin{eqnarray}\label{eq:proppsixi1}
  \frac{d(\psi^{(p)}_{\beta,\eta}(\alpha)-1)}{d\alpha} & = & \frac{d\lp(1-\beta)\frac{\sqrt{\frac{2}{\pi}}e^{-\lp\erfinv\lp\frac{1-\alpha}{1-\beta}\rp\rp^2}}{(\alpha-\eta\beta)\sqrt{2}\erfinv (\frac{1-\alpha}{1-\beta})}-1\rp}{d\alpha}\nonumber\\
   & = & \frac{2 (\beta-1) e^{-\left (\erfinv\left (\frac{1-\alpha}{1-\beta}\right )\right )^2} \left (\erfinv\left (\frac{1-\alpha}{1-\beta}\right )\right )
  +\sqrt{\pi} (\alpha-\eta\beta) \left (2 \left (\erfinv\left (\frac{1-\alpha}{1-\beta}\right )\right )^2+1\right )}{2 (\alpha-\eta\beta)^2 \sqrt{\pi} \left (\erfinv\left (\frac{1-\alpha}{1-\beta}\right )\right )^2}.\nonumber \\
\end{eqnarray}
Utilizing $q$ from (\ref{eq:propxi3}) we then have
\begin{equation}\label{eq:proppsixi3}
  \frac{\frac{2 (\beta-1)}{e^{\left (\erfinv\left (\frac{1-\alpha}{1-\beta}\right )\right )^2}} \left (\erfinv\left (\frac{1-\alpha}{1-\beta}\right )\right )
  +\sqrt{\pi} (\alpha-\eta\beta) \left (2 \left (\erfinv\left (\frac{1-\alpha}{1-\beta}\right )\right )^2+1\right )}{2 (\alpha-\eta\beta)^2 \sqrt{\pi} \left (\erfinv\left (\frac{1-\alpha}{1-\beta}\right )\right )^2}   =  \frac{2 \frac{(\alpha-1)e^{-q^2} q}{\erf(q)}
  +\sqrt{\pi} (\alpha-\eta\beta) \left (2q^2+1\right )}{2 (\alpha-\eta\beta)^2 \sqrt{\pi} q^2}.
\end{equation}
Moreover,
\begin{eqnarray}\label{eq:proppsixi4}
\frac{2 \frac{(\alpha-1)e^{-q^2} q}{\erf(q)}
  +\sqrt{\pi} (\alpha-\eta\beta) \left (2q^2+1\right )}{2 (\alpha-\eta\beta)^2 \sqrt{\pi} q^2}
  & \geq &
  \frac{2 \frac{(\alpha-1)e^{-q^2} q}{\erf(q)}
  +\sqrt{\pi} (\alpha-\beta) \left (2q^2+1\right )}{2 (\alpha-\eta\beta)^2 \sqrt{\pi} q^2}\nonumber \\
  & = &
  \frac{2 \frac{(\alpha-1)e^{-q^2} q}{\erf(q)}
  +\sqrt{\pi} ((\alpha-1)+(1-\beta)) \left (2q^2+1\right )}{2 (\alpha-\eta\beta)^2 \sqrt{\pi} q^2}\nonumber \\
  & = &
  (1-\alpha)\lp \frac{-2 \frac{e^{-q^2} q}{\erf(q)}
  +\sqrt{\pi} \lp -1+\frac{1}{\erf(q)}\rp \left (2q^2+1\right )}{2 (\alpha-\eta\beta)^2 \sqrt{\pi} q^2}\rp\nonumber \\
  & = &
  (1-\alpha)\lp \frac{-2 \frac{e^{-q^2} q}{\erf(q)}
  +\sqrt{\pi} \lp \frac{\erfc(q)}{\erf(q)}\rp \left (2q^2+1\right )}{2 (\alpha-\eta\beta)^2 \sqrt{\pi} q^2}\rp\nonumber \\
  & = &
  (1-\alpha)\lp \frac{-2 e^{-q^2} q
  +\sqrt{\pi} \erfc(q) \left (2q^2+1\right )}{2 (\alpha-\eta\beta)^2 \sqrt{\pi} q^2\erf(q)}\rp\nonumber \\
  & > &
  (1-\alpha)\lp \frac{-2 e^{-q^2}q
  +\frac{2 e^{-q^2}\left (2q^2+1\right )}{\left (q+\sqrt{q^2+2}\right )}}{2 (\alpha-\eta\beta)^2 \sqrt{\pi} q^2\erf(q)}\rp\nonumber \\
  & = &
  (1-\alpha)\frac{2 e^{-q^2}}{\left (q+\sqrt{q^2+2}\right )}\lp \frac{(1+q^2-q\sqrt{q^2+2})}{2 (\alpha-\eta\beta)^2 \sqrt{\pi} q^2\erf(q)}\rp\nonumber \\
  & >& 0,
\end{eqnarray}
where the second to last inequality follows as an application of the first inequality in (\ref{eq:propxi2}). Connecting (\ref{eq:proppsixi1}), (\ref{eq:proppsixi3}), and (\ref{eq:proppsixi4}) we finally have
\begin{equation}\label{proppsoxi5}
    \frac{d(\psi^{(p)}_{\beta,\eta}(\alpha)-1)}{d\alpha}>0,
\end{equation}
and the function $(\psi^{(p)}_{\beta,\eta}(\alpha)-1)$ is indeed increasing on $(\beta,1]$.
Also, for any fixed $\eta\in (0,1)$ and any fixed $\beta\in (0,1)$, in \cite{Stojnicl1RegPosasymldp} we showed that $\lim_{\alpha\rightarrow \beta}(\alpha-\eta\beta)\psi^{(p)}_{\beta,\eta}(\alpha)-1=-1$. This then easily implies that $\lim_{\alpha\rightarrow \beta}\psi^{(p)}_{\beta,\eta}(\alpha)-1=-1$. Finally, in \cite{Stojnicl1RegPosasymldp}, we also argued that for $\eta=0$ and any fixed $\beta\in (0,1)$, $\lim_{\alpha\rightarrow 1}\psi^{(p)}_{\beta,\eta}(\alpha)-1=\infty >0$. This then easily implies that for any fixed $\eta\in (0,1)$ and any fixed $\beta\in (0,1)$, $\lim_{\alpha\rightarrow 1}\psi^{(p)}_{\beta,\eta}(\alpha)-1=\infty >0$. Together with the above proven increasing property this then shows that for any fixed $\eta\in (0,1)$ and any fixed $\beta\in (0,1)$ there is a unique $\alpha$ such that $\psi^{(p)}_{\beta,\eta}(\alpha)=1$, which reconfirms that the partial $\ell_1$'s fundamental PT characterization is unambiguous. For the completeness, in Figure \ref{fig:proppsi} we present a few numerical results related to the behavior of $\psi^{(p)}_{\beta,\eta}(\alpha)$ that are indeed in agreement with the above calculations.
\begin{figure}[htb]
\begin{minipage}[b]{.5\linewidth}
\centering
\centerline{\epsfig{figure=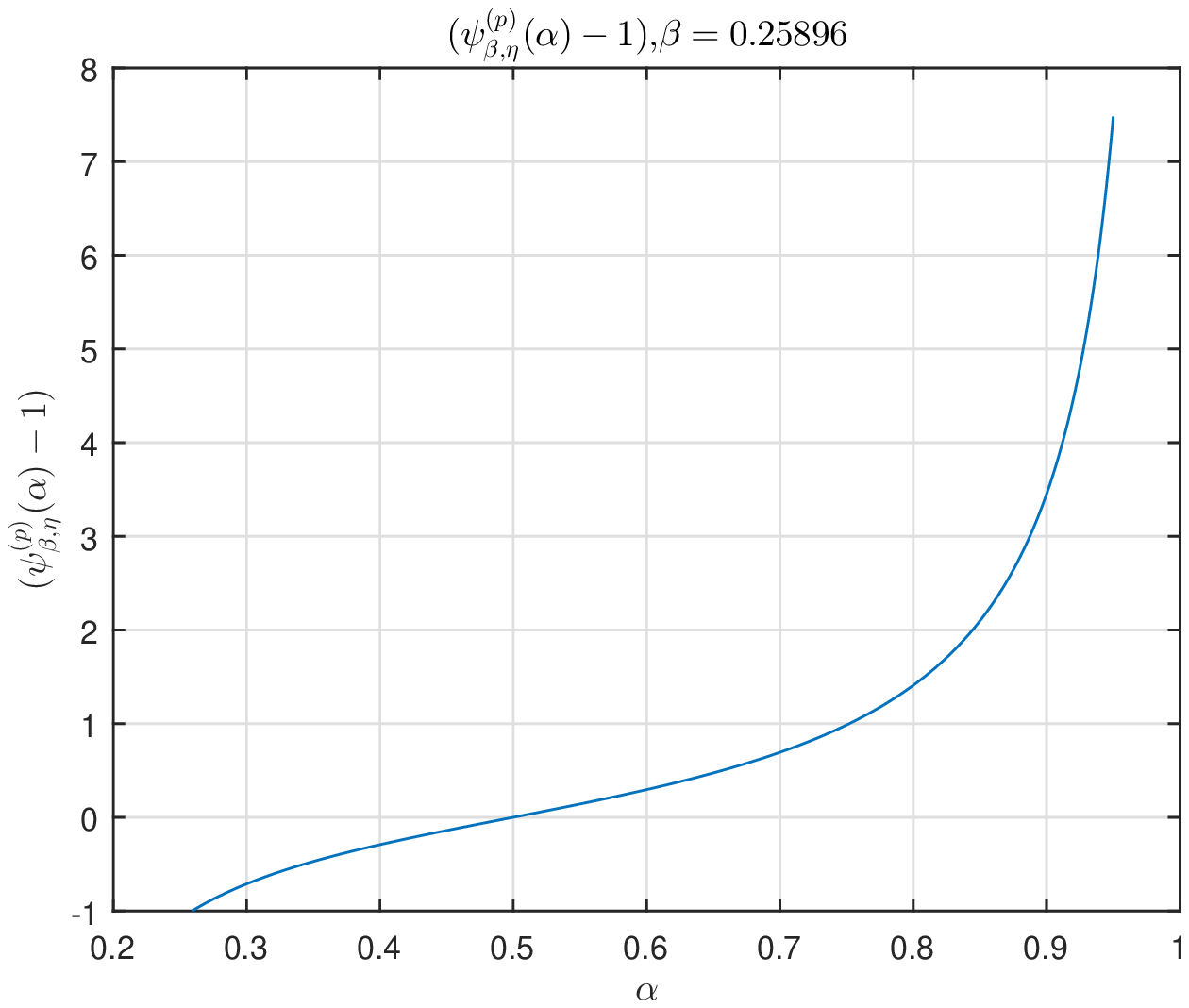,width=9cm,height=7cm}}
\end{minipage}
\begin{minipage}[b]{.5\linewidth}
\centering
\centerline{\epsfig{figure=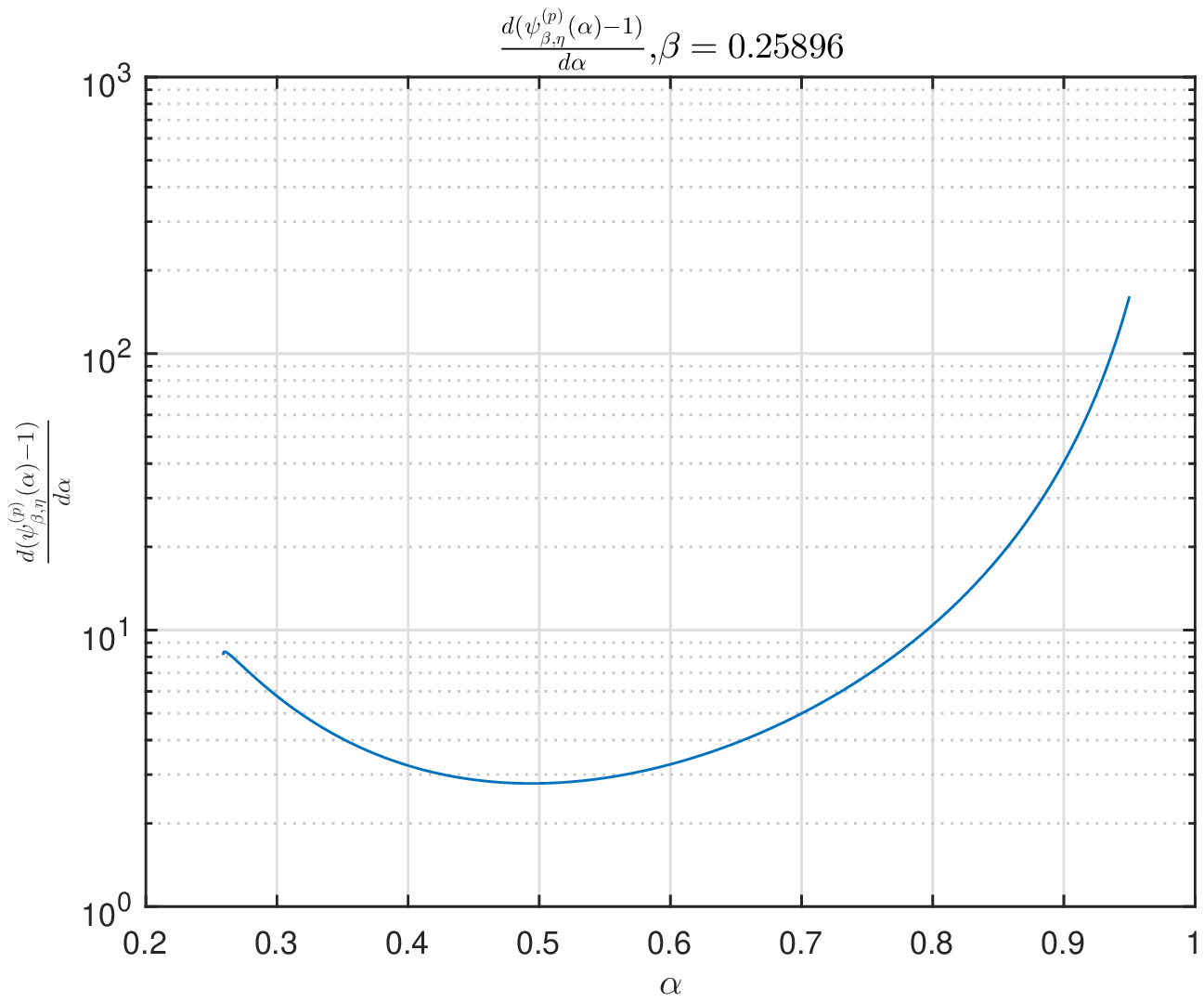,width=9cm,height=7cm}}
\end{minipage}
\caption{Properties of $\psi^{(p)}_{\beta,\eta}(\alpha)$: $\psi^{(p)}_{\beta,\eta}(\alpha)-1$ as a function of $\alpha$ ($\beta=0.25896$) -- left; $\frac{d(\psi^{(p)}_{\beta,\eta}(\alpha)-1)}{d\alpha}$ as a function of $\alpha$ ($\beta=0.25896$) -- right}
\label{fig:proppsi}
\end{figure}

Finally, to give a little bit of a flavor as to what is actually proven in Theorem \ref{thm:thmweakthr} we in Figure \ref{fig:weakl1PT} show the theoretical PT curves that one can obtain based on (\ref{eq:thmweaktheta2}) for several different values of $\eta$. One easily observes that as $\eta$ increases the recoverable sparsity increases as well. In other words, the larger the size of the known subset of $supp(\x)$ the larger the cardinality of $supp(\x)$ that can be recovered through $\ell_1$ as well.
\begin{figure}[htb]
\centering
\centerline{\epsfig{figure=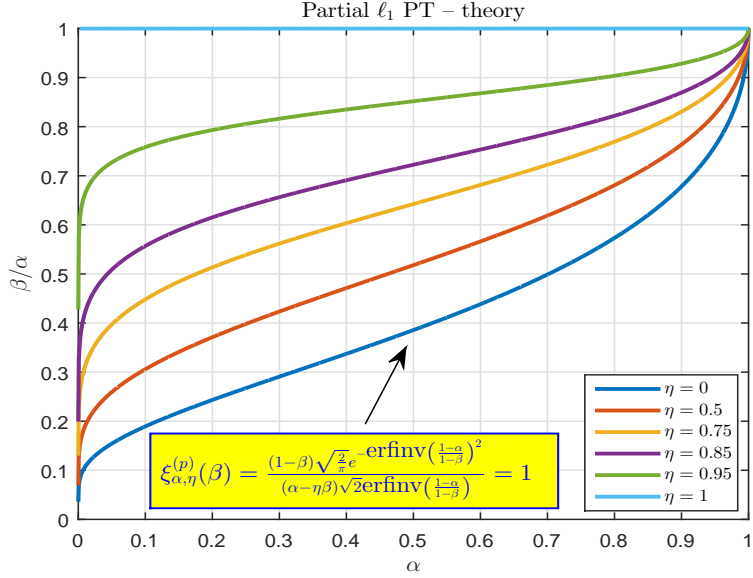,width=11.5cm,height=8cm}}
\caption{Partial $\ell_1$'s weak PT; $\{(\alpha,\beta)|\xi^{(p)}_{\alpha,\eta}(\beta)=1\}$}
\label{fig:weakl1PT}
\end{figure}


\section{Partial $\ell_1$ -- large deviations}
\label{sec:ldp}

In this section we substantially expand the phase transition considerations from the previous section. We will do so by connecting them to the large deviation principles (LDP) and then providing the LDPs thorough characterizations. In a nutshell, the LDP characterizations will provide a significantly richer spectrum of information about the PTs. In addition to determining the PT curves that the standard PT characterizations do, the LDP ones explain the algorithms behaviour in the entire transition zone. They essentially determine the rate at which the probabilities of algorithms' success (failure) tend to zero as the systems dimensions deviate from the ones that satisfy the PT curves (i.e. the breaking points of the algorithms' success). To fully characterize these rates, we will essentially determine them for any point in $(\alpha,\beta)$ plane (also, and as is probably expected, to ensure that the results make sense we will assume $\{(\alpha,\beta)| \beta<\alpha,\alpha\in(0,1)\}$). To achieve these characterizations we will first recall on the connection between the PTs and the LDPs that we established in \cite{Stojnicl1RegPosasymldp} and then try to emulate the strategies designed in \cite{Stojnicl1RegPosasymldp}. To facilitate the following we will, as usual, try to make everything look as parallel to what was done in \cite{Stojnicl1RegPosasymldp} as possible. To that end we will try to skip all the steps that remain the same as in \cite{Stojnicl1RegPosasymldp} and highlight the different ones. Before doing any of that we will start things off by recalling on a couple of results that we established in \cite{StojnicCSetam09,StojnicICASSP09,StojnicICASSP10knownsupp}. These are of course among the key unsung heros of all the success that we achieved in designing our probabilistic approach for characterizing PTs and LDPs.

As was done in \cite{Stojnicl1RegPosasymldp}, for the simplicity and without loss of generality we will assume that the elements $\x_{1},\x_{2},\dots,\x_{n-k}$ of $\x$ are equal to zero and that the elements $\x_{n-k+1},\x_{n-k+2},\dots,\x_n$ have fixed signs, say they all are positive (this is of course in an agreement with the requirement that the weak phase transition imposes). The following was proved in \cite{StojnicICASSP10knownsupp,StojnicTowBettCompSens13} relying on the breakthrough observations of \cite{StojnicCSetam09,StojnicICASSP09} and as mentioned above is one of the key features that enabled us to run the entire machinery developed in \cite{StojnicCSetam09,StojnicICASSP09,StojnicICASSP10knownsupp,StojnicTowBettCompSens13}.
\begin{theorem}(\cite{StojnicICASSP10knownsupp,StojnicTowBettCompSens13} Nonzero elements of $\x$ have fixed signs and location)
Assume that an $m\times n$ measurement matrix $A$ is given. Let $\x$
be a $k$ sparse vector. Also let $\x_1=\x_2=\dots=\x_{n-k}=0$. Let the signs of $\x_{n-k+1},\x_{n-k+2},\dots,\x_n$ be fixed, say all positive. Also, let it be known to the algorithm given in (\ref{eq:l1}) that $\x_{n-\eta k+1},\x_{n-\eta k+2},\dots,\x_n$ are among the $k$ non-zero components of $\x$, i.e. let $\Pi=\{n-\eta k+1,n-\eta k+2,\dots,n\}$, where $0\leq\eta\leq 1$.
Further, assume that $\y\triangleq A\x$ and that $\w$ is
an $n\times 1$ vector. If
\begin{equation}
(\forall \w\in \textbf{R}^n | A\w=0) \quad  -\sum_{i=n-k+1}^{n-\eta k} \w_i<\sum_{i=1}^{n-k}|\w_{i}|,
\end{equation}
then the solutions of (\ref{eq:l1imp}) and (\ref{eq:l0}) coincide. Moreover, if
\begin{equation}
(\exists \w\in \textbf{R}^n | A\w=0) \quad  -\sum_{i=n-k+1}^{n-\eta k} \w_i\geq \sum_{i=1}^{n-k}|\w_{i}|,
\label{eq:thmeqgen}
\end{equation}
then there will be a $k$-sparse $\x$ (from the set of $\x$'s with fixed locations and signs of nonzero components) with a priori known set of locations $\Pi$ that is the solution of (\ref{eq:l0}) and is not the solution of (\ref{eq:l1imp}).
\label{thm:thmknownsuppcond}
\end{theorem}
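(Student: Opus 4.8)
The plan is to reduce the equivalence of (\ref{eq:l1imp}) and (\ref{eq:l0}) to a null-space condition on $A$, exactly mirroring the classical null-space property argument for the standard $\ell_1$ but now adapted to the partial objective $\sum_{i\notin\Pi}|\x_i|$. First I would fix the sparse vector $\x$ with the prescribed support structure: $\x_1=\dots=\x_{n-k}=0$, the nonzero block $\x_{n-k+1},\dots,\x_n$ positive, and $\Pi=\{n-\eta k+1,\dots,n\}$ the a priori known part of the support. Any competitor in (\ref{eq:l1imp}) can be written as $\x+\w$ where $\w\in\textbf{R}^n$ lies in the null space of $A$ (since $A(\x+\w)=\y=A\x$ forces $A\w=0$), and conversely every such $\x+\w$ is feasible for (\ref{eq:l1imp}). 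So $\x$ is the unique minimizer of the partial $\ell_1$ objective if and only if for every nonzero $\w$ with $A\w=0$ one has $\sum_{i\notin\Pi}|\x_i+\w_i|>\sum_{i\notin\Pi}|\x_i|$.

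Next I would expand the left-hand side by splitting the index set $\{1,\dots,n\}\setminus\Pi$ into the zero part $\{1,\dots,n-k\}$ and the "unknown nonzero" part $\{n-k+1,\dots,n-\eta k\}$ (the indices in $\Pi$ simply drop out of both sides of the objective since they carry no cost). On the zero part $\x_i=0$, so $|\x_i+\w_i|=|\w_i|$. On the unknown-nonzero part, $\x_i>0$, so for $\w$ small $|\x_i+\w_i|=\x_i+\w_i$; the key point — and this is where the positivity assumption is used — is that $|\x_i+\w_i|\ge \x_i+\w_i$ always, with the worst case (for showing strict increase) being realized in the limit, and more importantly that if $\x$ fails to be optimal it fails against a scaled copy of a witnessing $\w$, so by homogeneity we may restrict attention to the linearized inequality. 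Carrying out the cancellation of $\sum_{i=n-k+1}^{n-\eta k}\x_i$ from both sides, the condition $\sum_{i\notin\Pi}|\x_i+\w_i|>\sum_{i\notin\Pi}|\x_i|$ becomes $\sum_{i=1}^{n-k}|\w_i|+\sum_{i=n-k+1}^{n-\eta k}\w_i>0$, i.e. $-\sum_{i=n-k+1}^{n-\eta k}\w_i<\sum_{i=1}^{n-k}|\w_i|$, which is precisely the stated sufficient condition. Since $A\x=\y$ guarantees $\x$ is feasible and the sparsest (by the hypotheses carried over from Theorem \ref{thm:thmweakthr}'s setup), this gives the first implication. For the converse (the "Moreover" part), given a $\w$ in the null space violating the inequality, i.e. $-\sum_{i=n-k+1}^{n-\eta k}\w_i\ge\sum_{i=1}^{n-k}|\w_i|$, I would construct an explicit $\x$ (choosing the magnitudes on the unknown-support block large enough relative to $\w$ so that $\x+\w$ stays sign-consistent there) for which $\x+\w$ has partial $\ell_1$ cost no larger than that of $\x$, exhibiting a $k$-sparse solution of (\ref{eq:l0}) that is not recovered by (\ref{eq:l1imp}).

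The main obstacle I anticipate is the careful handling of the non-strict versus strict inequalities and the homogeneity/scaling argument in the "Moreover" direction: one must verify that a single witnessing null-space vector $\w$ can be converted into a genuine counterexample vector $\x$ with the prescribed fixed-sign, fixed-location structure, controlling the entries of $\x$ on the block $\{n-k+1,\dots,n-\eta k\}$ so that the absolute values linearize in the intended direction while keeping $\x$ exactly $k$-sparse. This bookkeeping — rather than any deep idea — is the delicate part; the underlying geometry is just the standard observation that $\ell_1$-type recovery is governed by a one-sided condition on how null-space vectors distribute their mass across the known-zero, known-nonzero, and a priori known coordinates. I would defer the full details to the cited references \cite{StojnicICASSP10knownsupp,StojnicTowBettCompSens13}, where the construction was first carried out.
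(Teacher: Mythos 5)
Your proposal is correct and follows essentially the same route as the cited works \cite{StojnicICASSP10knownsupp,StojnicTowBettCompSens13}: the paper itself supplies no proof beyond the citation, and the intended argument is exactly the null-space characterization you describe, with sufficiency following from the pointwise bound $|\x_i+\w_i|\ge \x_i+\w_i$ on the unknown-support block (no scaling or linearization is actually needed there) and necessity from choosing the nonzero magnitudes of $\x$ large enough that $\x+\w$ remains sign-consistent. The only caveat worth noting is the implicit restriction to nonzero $\w$ in both directions, which you handle in the intended spirit.
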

To facilitate the exposition we set
\begin{equation}
\Sw\triangleq\{\w\in S^{n-1}| \quad -\sum_{i=n-k+1}^{n-\eta k} \w_i<\sum_{i=1}^{n-k}|\w_{i}|\}.\label{eq:defSwpr}
\end{equation}
As was done in \cite{Stojnicl1RegPosasymldp} we will split the LDP analysis into two parts, the first main one that deals with the so-called upper tail of the LDP characterizations and the second one that deals with the corresponding lower tail. Also, as was the case in \cite{Stojnicl1RegPosasymldp}, it will turn out that the upper tail analysis with minimal adaptations automatically settles the lower tail as well. Consequently, we will mostly focus on the upper tail and once we have those results established we will quickly transform them to cover the lower tail as well.

\subsection{Upper tail}
\label{sec:uppertail}

As mentioned above, we will first consider points $(\alpha,\beta)$ such that $\alpha\geq \alpha_w$ where $\alpha_w$ is such that $\psi^{(p)}_{\beta,\eta}(\alpha_w)=\xi^{(p)}_{\alpha_w,\eta}(\beta)=1$. These points will establish what we will refer to as the LDPs upper tail (or for short just the upper tail). The remaining ones will be briefly discussed in a section later on and they will establish what we will refer to as the lower tail.

As in \cite{Stojnicl1RegPosasymldp}, we will assume that the elements of $A$ are i.i.d. standard normals and will be interested in the following probability
\begin{equation}
P_{err}\triangleq P(\min_{\w\in S_w}\|A\w\|_2\leq 0)=P(\max_{\w\in S_w}\min_{\|\y\|_2=1}(\y^T A\w )\geq 0).
\label{eq:ldpprob}
\end{equation}
where $P_{err}$ is the so-called probability of error/failure, i.e. the probability that (\ref{eq:l1imp}) fails to produce the solution of (\ref{eq:l0}). In \cite{Stojnicl1RegPosasymldp} we, for any $c_3\geq 0$, established the following
\begin{equation}
P_{err}\leq e^{-\frac{c_3^2}{2}}E\max_{\w\in S_w}\min_{\|\y\|_2=1}e^{-c_3(\y^T A\w+g)}\leq
e^{-\frac{c_3^2}{2}}Ee^{-c_3\|\g\|_2}Ee^{c_3w(\h,S_w)},
\label{eq:ldpprob3}
\end{equation}
where
\begin{equation}
w(\h,\Sw)\triangleq\max_{\w\in \Sw} (\h^T\w), \label{eq:widthdefSw}
\end{equation}
and the elements of $\h$ are i.i.d. standard normals. To characterize the right hand side of (\ref{eq:ldpprob3}) one then focuses, as in \cite{Stojnicl1RegPosasymldp}, on $w(\h,S_w)$. In \cite{StojnicCSetam09,StojnicCSetamBlock09,StojnicBlockasymldpfinn15,StojnicLiftStrSec13,StojnicICASSP10knownsupp,StojnicTowBettCompSens13} we developed a super powerful mechanism that enables a very elegant and useful representation for $w(\h,S_w)$. Instead of repeating all the details we just present the final, neat results that we will utilize here.

We start by setting
\begin{equation}
\hw\triangleq(|\h_1|,|\h_2|,\dots,|\h_{n-k}|,
-\h_{n-k+1},-\h_{n-k+2},\dots,-\h_{n})^T,\label{eq:defhweak}
\end{equation}
then one can characterize $w(\h,\Sw)$ in (\ref{eq:widthdefSw}) in the following way
\begin{eqnarray}
w(\h,\Sw) = \max_{\bar{\y}\in \mR^{n}} & &  \sum_{i=1}^{n} \hw_i \bar{\y}_i\nonumber \\
\mbox{subject to} &  & \bar{\y}_i\geq 0, 0\leq i\leq n-k\nonumber \\
& & \sum_{i=n-k+1}^{n-\eta k}\bar{\y}_i\geq \sum_{i=1}^{n-k} \bar{\y}_i \nonumber \\
& & \sum_{i=1}^{n}\bar{\y}_i^2\leq 1\label{eq:workww2}
\end{eqnarray}
where $\hw_i$ is the $i$-th element of $\hw$ and $\bar{\y}_i$ is the $i$-th element of $\bar{\y}$. Solving (\ref{eq:workww2}) as was done in \cite{StojnicCSetam09,StojnicCSetamBlock09,StojnicBlockasymldpfinn15,StojnicLiftStrSec13,StojnicICASSP10knownsupp,StojnicTowBettCompSens13} one obtains
\begin{eqnarray}
w(\h,\Sw) = -\max_{\nu\geq 0,\gamma\geq 0}\min_{\bar{\y}} & & \sum_{i=1}^{n} -\hw_i \bar{\y}_i+\nu\sum_{i=1}^{n-k}\bar{\y}_i
-\nu\sum_{i=n-k+1}^{n-\eta k}\bar{\y}_i+\gamma\sum_{i=1}^{n}\bar{\y}_i^2-\gamma\nonumber \\
\mbox{subject to} & & \bar{\y}_i\geq 0, 0\leq i\leq n-k.\label{eq:ldpwhSw0}
\end{eqnarray}
Finally, one also has
\begin{eqnarray}
w(\h,\Sw) & = & \min_{\nu\geq0,\gamma\geq 0} \frac{\sum_{i=1}^{n-k}\max(\hw_i-\nu,0)^2+\sum_{i=n-k+1}^{n}(\hw_i+\nu)^2+\sum_{i=n-\eta k+1}^{n}(\hw_i)^2}{4\gamma}+\gamma\nonumber \\
& = & \min_{\nu\geq0}\sqrt{\sum_{i=1}^{n-k}\max(\hw_i-\nu,0)^2+\sum_{i=n-k+1}^{n-\eta k}(\hw_i+\nu)^2+\sum_{i=n-\eta k+1}^{n}(\hw_i)^2}.\label{eq:ldpwhSw}
\end{eqnarray}
We summarize the above methodology to upper bound $P_{err}$ in the following theorem.
\begin{theorem}
Let $A$ be an $m\times n$ matrix in (\ref{eq:l0})
with i.i.d. standard normal components. Let
the unknown $\x$ in (\ref{eq:l0}) be $k$-sparse and let the location and the signs of nonzero elements of $\x$ be arbitrarily chosen but fixed. Assume that the locations of $\eta k$ ($0\leq\eta\leq 1$) of the non-zero elements are arbitrarily chosen, fixed, and a priori known, and let $\Pi$ be the set of those locations. Let $P_{err}$ be the probability that the solution of (\ref{eq:l1imp}) is not the solution of (\ref{eq:l0}). Then
\begin{eqnarray}
P_{err}& \leq & \min_{c_3\geq 0}e^{-\frac{c_3^2}{2}}e^{-c_3\|\g\|_2}Ee^{c_3w(\h,S_w)}\nonumber \\
& = & \min_{c_3\geq 0}\left (e^{-\frac{c_3^2}{2}}\frac{1}{\sqrt{2\pi}^m}\int_{\g}e^{-\sum_{i=1}^{m}\g_i^2/2-c_3\|\g\|_2}d\g \min_{\nu\geq 0,\gamma\geq\frac{c_3}{2}} w_1^{n-k}w_2^{k(1-\eta)}w_3^{k\eta}e^{c_3\gamma}\right ),
\label{eq:ldpthm1perrub1}
\end{eqnarray}
where
\begin{eqnarray}
w_1 &=& \frac{1}{\sqrt{2\pi}}\int_{\bar{h}}e^{-\bar{h}^2/2}e^{c_3\max(|\bar{h}|-\nu,0)^2/4/\gamma}d\bar{h}
  =\frac{e^{\frac{c_3\nu^2/4/\gamma}{1-c_3/2/\gamma}}}{\sqrt{1-c_3/2/\gamma}}\erfc\left (\frac{\nu}{\sqrt{2}\sqrt{1-c_3/2/\gamma}}\right )+\erf\left (\frac{\nu}{\sqrt{2}}\right )\nonumber \\
w_2 &=& \frac{1}{\sqrt{2\pi}}\int_{\bar{h}}e^{-\bar{h}^2/2}e^{c_3(\bar{h}+\nu)^2/4/\gamma}d\bar{h}
  =\frac{e^{\frac{c_3\nu^2/4/\gamma}{1-c_3/2/\gamma}}}{\sqrt{1-c_3/2/\gamma}}\nonumber \\
w_3 &=& \frac{1}{\sqrt{2\pi}}\int_{\bar{h}}e^{-\bar{h}^2/2}e^{c_3(\bar{h})^2/4/\gamma}d\bar{h}
  =\frac{1}{\sqrt{1-c_3/2/\gamma}}.\label{eq:ldpthm1perrub2}
\end{eqnarray}\label{thm:ldp1}
\end{theorem}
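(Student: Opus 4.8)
The plan is to combine the Chernoff-type bound on $P_{err}$ recalled in (\ref{eq:ldpprob3}) with the explicit minimization representation of the Gaussian width $w(\h,\Sw)$ given in (\ref{eq:ldpwhSw}), and then to evaluate the resulting one-dimensional Gaussian integrals in closed form; no genuinely new inequality is needed beyond what is already on the page.

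First I would invoke (\ref{eq:ldpprob3}) directly: for every $c_3\geq 0$ one has $P_{err}\leq e^{-c_3^2/2}\,Ee^{-c_3\|\g\|_2}\,Ee^{c_3w(\h,\Sw)}$, where $\g$ has i.i.d. standard normal entries independent of $\h$, this being the output of the Gordon-type comparison argument carried out in \cite{Stojnicl1RegPosasymldp}. Taking the infimum over $c_3\geq 0$ already produces the first line of the claimed bound, while the factor involving $\g$ is precisely $Ee^{-c_3\|\g\|_2}=\frac{1}{\sqrt{2\pi}^m}\int_{\g}e^{-\sum_{i=1}^{m}\g_i^2/2-c_3\|\g\|_2}d\g$, which is the corresponding factor in the asserted equality. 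Hence the whole problem reduces to bounding $Ee^{c_3w(\h,\Sw)}$.

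The main step is to feed the first equality in (\ref{eq:ldpwhSw}) into this expectation. Abbreviating $\Sigma(\nu)\triangleq\sum_{i=1}^{n-k}\max(\hw_i-\nu,0)^2+\sum_{i=n-k+1}^{n-\eta k}(\hw_i+\nu)^2+\sum_{i=n-\eta k+1}^{n}(\hw_i)^2$, that equality reads $w(\h,\Sw)=\min_{\nu\geq 0,\gamma\geq 0}(\Sigma(\nu)/(4\gamma)+\gamma)$, so for each fixed feasible $\nu\geq 0$, $\gamma\geq 0$ the expression $\Sigma(\nu)/(4\gamma)+\gamma$ is a pointwise (in $\h$) upper bound on $w(\h,\Sw)$, giving $Ee^{c_3w(\h,\Sw)}\leq e^{c_3\gamma}\,Ee^{\frac{c_3}{4\gamma}\Sigma(\nu)}$. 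By the definition (\ref{eq:defhweak}) each $\hw_i$ depends only on the single independent standard normal $\h_i$ ($\hw_i=|\h_i|$ for $i\leq n-k$ and $\hw_i=-\h_i$ for $i>n-k$), so the three blocks of summands in $\Sigma(\nu)$ are mutually independent and $Ee^{\frac{c_3}{4\gamma}\Sigma(\nu)}$ factorizes into $n-k$ identical factors from the first block, $k(1-\eta)$ from the second, and $\eta k$ from the third. Each factor is a scalar Gaussian integral: completing the square handles the $(\hw_i+\nu)^2$ and $(\hw_i)^2$ blocks and yields $w_2$ and $w_3$, while for the first block one splits the integral at $|\bar h|=\nu$ so that the flat region $[-\nu,\nu]$ contributes the $\erf(\nu/\sqrt 2)$ term and the shifted tails contribute the $\erfc$ term, yielding $w_1$, all exactly as in (\ref{eq:ldpthm1perrub2}). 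Therefore $Ee^{c_3w(\h,\Sw)}\leq\min_{\nu\geq 0,\gamma\geq c_3/2}w_1^{n-k}w_2^{k(1-\eta)}w_3^{k\eta}e^{c_3\gamma}$, and substituting this into the first line produces the stated equality.

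I expect the computation itself to be routine, the single point requiring care being the integrability bookkeeping. The scalar integrals defining $w_1,w_2,w_3$ converge exactly when $1-c_3/(2\gamma)>0$, so the constraint $\gamma\geq c_3/2$ must be propagated into the inner minimization (the boundary value $\gamma=c_3/2$ is harmless, merely producing the trivial $+\infty$ bound), and in the $w_1$ integral the truncation $\max(\,\cdot\,,0)^2$ must be treated so that the non-truncated region supplies the $\erf(\nu/\sqrt 2)$ term rather than a Gaussian-tail term. Everything else is a direct transcription of the displays already preceding the theorem and of the parallel development in \cite{Stojnicl1RegPosasymldp}.
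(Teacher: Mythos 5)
Your proposal is correct and follows exactly the route the paper intends: the paper's proof of Theorem \ref{thm:ldp1} simply points back to the displays (\ref{eq:ldpprob3})--(\ref{eq:ldpwhSw}) and the mechanisms of the cited prior works, which is precisely the chain you execute (Chernoff/Gordon bound, fixing $\nu,\gamma$ in the dual representation of $w(\h,\Sw)$ to get a pointwise upper bound, factorizing over independent coordinates, and evaluating the three scalar Gaussian integrals). Your remarks on the convergence constraint $\gamma>c_3/2$ and on reading the chain as an upper bound rather than a literal equality are the right way to interpret the statement.
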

\begin{proof}
Follows from the above considerations and ultimately through an adaptation of the mechanisms developed in \cite{StojnicCSetam09,StojnicCSetamBlock09,StojnicBlockasymldpfinn15,StojnicLiftStrSec13,StojnicICASSP10knownsupp,StojnicTowBettCompSens13}.
\end{proof}
The upper bound given in the above theorem is valid for any integers $m$, $k$, and $n$ (provided $k\leq m\leq n$ so that the results make sense). Our main concern below though is the asymptotic regime, basically the same one as in Theorem \ref{thm:thmweakthr}. In particular, and following \cite{Stojnicl1RegPosasymldp}, we will be interested in the rate, $I^{(p)}_{err}(\alpha,\beta)$, at which $P_{err}$ decays
\begin{equation}\label{eq:ldpasymp1}
  I^{(p)}_{err}(\alpha,\beta)\triangleq\lim_{n\rightarrow\infty}\frac{\log{P_{err}}}{n}.
\end{equation}
One now clearly recognizes that the $I^{(p)}_{err}(\alpha,\beta)$ essentially emulates the so-called LDP's rate (indicator) function. Based on Theorem \ref{thm:ldp1} we have the following LDP type of theorem.
\begin{theorem}
Assume the setup of Theorem \ref{thm:ldp1}. Further, let integers $m$, $k$, and $n$ be large ($k\leq m\leq n$) such that $\beta=\frac{k}{n}$ and $\alpha=\frac{m}{n}$ are constants independent of $n$. Assume that a pair $(\alpha,\beta)$  is given. Also, assume the following scaling: $c_3\rightarrow c_3\sqrt{n}$ and $\gamma\rightarrow\gamma\sqrt{n}$. Then
\begin{eqnarray}
I^{(p)}_{err}(\alpha,\beta)& \triangleq &\lim_{n\rightarrow\infty}\frac{\log{P_{err}}}{n}\nonumber \\
& \leq & \min_{c_3\geq 0}\left (-\frac{(c_3)^2}{2}+I_{sph}+\min_{\nu\geq 0,\gamma\geq \frac{c_3}{2}} ((1-\beta)\log{w_1}+(1-\eta)\beta\log{w_2}+\eta\beta\log{w_3}+c_3\gamma)\right )\nonumber \\
& \triangleq &I_{err,u}^{(p,ub)}(\alpha,\beta),
\label{eq:ldpthm2Ierrub1}
\end{eqnarray}
where
\begin{eqnarray}
I_{sph} &=& \widehat{\gamma}c_3-\frac{\alpha }{2}\log\left (1-\frac{c_3}{2\widehat{\gamma}}\right )\nonumber \\
  \widehat{\gamma} &=& \frac{c_3-\sqrt{(c_3)^2+4\alpha}}{4}\nonumber \\
w_1 &=& \frac{1}{\sqrt{2\pi}}\int_{\bar{h}}e^{-\bar{h}^2/2}e^{c_3\max(|\bar{h}|-\nu,0)^2/4/\gamma}d\bar{h}
  =\frac{e^{\frac{c_3\nu^2/4/\gamma}{1-c_3/2/\gamma}}}{\sqrt{1-c_3/2/\gamma}}\erfc\left (\frac{\nu}{\sqrt{2}\sqrt{1-c_3/2/\gamma}}\right )+\erf\left (\frac{\nu}{\sqrt{2}}\right )\nonumber \\
  w_2 &=& \frac{1}{\sqrt{2\pi}}\int_{\bar{h}}e^{-\bar{h}^2/2}e^{c_3(\bar{h}+\nu)^2/4/\gamma}d\bar{h}
  =\frac{e^{\frac{c_3\nu^2/4/\gamma}{1-c_3/2/\gamma}}}{\sqrt{1-c_3/2/\gamma}}\nonumber \\
  w_3 &=& \frac{1}{\sqrt{2\pi}}\int_{\bar{h}}e^{-\bar{h}^2/2}e^{c_3(\bar{h})^2/4/\gamma}d\bar{h}
  =\frac{1}{\sqrt{1-c_3/2/\gamma}}.\label{eq:ldpthm2perrub2}
\end{eqnarray}\label{thm:ldp2}
\end{theorem}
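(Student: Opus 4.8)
The plan is to derive the asymptotic rate bound in Theorem~\ref{thm:ldp2} directly from the finite-dimensional bound in Theorem~\ref{thm:ldp1} by taking logarithms, dividing by $n$, and applying the prescribed scaling $c_3\rightarrow c_3\sqrt{n}$, $\gamma\rightarrow\gamma\sqrt{n}$. Starting from \eqref{eq:ldpthm1perrub1}, the right-hand side is a product of four factors inside $\min_{c_3\ge 0}$: the term $e^{-c_3^2/2}$, the spherical integral $\frac{1}{\sqrt{2\pi}^m}\int_{\g}e^{-\|\g\|_2^2/2-c_3\|\g\|_2}d\g$, the product $w_1^{n-k}w_2^{k(1-\eta)}w_3^{k\eta}$, and $e^{c_3\gamma}$. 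After the substitution $c_3\mapsto c_3\sqrt{n}$, the first factor contributes $-\frac{c_3^2 n}{2}$ in the exponent, so $\frac{1}{n}$ times its log is $-\frac{c_3^2}{2}$. The factor $e^{c_3\gamma}$ with both scalings becomes $e^{c_3\gamma n}$, contributing $c_3\gamma$. The quantities $w_1,w_2,w_3$ from \eqref{eq:ldpthm1perrub2} depend on $c_3$ and $\gamma$ only through the ratio $c_3/(2\gamma)$, which is scale-invariant under the joint substitution, so $w_1,w_2,w_3$ are unchanged and $\frac{1}{n}\log(w_1^{n-k}w_2^{k(1-\eta)}w_3^{k\eta})\to (1-\beta)\log w_1+(1-\eta)\beta\log w_2+\eta\beta\log w_3$ using $\frac{k}{n}\to\beta$, $\frac{m}{n}\to\alpha$.

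The only genuinely new piece of work is evaluating $I_{sph}$, i.e.\ computing $\lim_{n\to\infty}\frac{1}{n}\log\left(\frac{1}{\sqrt{2\pi}^m}\int_{\g}e^{-\|\g\|_2^2/2-c_3\sqrt{n}\|\g\|_2}d\g\right)$ with $m=\alpha n$. I would handle this exactly as in \cite{Stojnicl1RegPosasymldp}: switch to the radial variable, so the integral becomes (up to the surface-area constant of $S^{m-1}$) $\int_0^\infty r^{m-1}e^{-r^2/2-c_3\sqrt{n}\, r}\,dr$, rescale $r=\rho\sqrt{n}$ to make the integrand of the form $e^{n\phi(\rho)}$ with $\phi(\rho)=\alpha\log\rho-\rho^2/2-c_3\rho$ (plus lower-order terms from the surface-area normalization and the Gaussian constant that vanish after dividing by $n$), and apply Laplace's method. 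Maximizing $\phi$ gives $\frac{\alpha}{\rho}-\rho-c_3=0$, i.e.\ $\rho^2+c_3\rho-\alpha=0$, whose positive root relates to $\widehat\gamma=\frac{c_3-\sqrt{c_3^2+4\alpha}}{4}$ in the stated form; substituting back and simplifying (using $\log\rho$ in terms of $\widehat\gamma$) yields $I_{sph}=\widehat\gamma c_3-\frac{\alpha}{2}\log\left(1-\frac{c_3}{2\widehat\gamma}\right)$. The constraint $\gamma\ge\frac{c_3}{2}$ in the finite-$n$ statement survives verbatim (it is needed for $w_1,w_2,w_3$ to be finite, since it forces $1-c_3/(2\gamma)>0$), and the scaling leaves it unchanged.

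Assembling the four contributions inside $\min_{c_3\ge0}$ and $\min_{\nu\ge0,\gamma\ge c_3/2}$ gives exactly $I_{err,u}^{(p,ub)}(\alpha,\beta)$ as in \eqref{eq:ldpthm2Ierrub1}. The one point requiring a little care is interchanging the limit $n\to\infty$ with the minimizations over $c_3,\nu,\gamma$: since the finite-$n$ bound holds for \emph{every} admissible $(c_3,\nu,\gamma)$, one can first fix them, pass to the limit, and only then minimize, which yields an upper bound on $\limsup_{n}\frac{\log P_{err}}{n}$; this is all that is claimed (the theorem states a ``$\le$''). The main obstacle, such as it is, is the Laplace-method evaluation of $I_{sph}$ and the algebraic manipulation identifying the maximizer with $\widehat\gamma$ --- but this is routine and identical to the computation already carried out in \cite{Stojnicl1RegPosasymldp} and \cite{StojnicCSetam09}, so it can be cited rather than redone. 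I would close by noting that everything else (the forms of $w_1,w_2,w_3$, the Gaussian integral evaluations producing the $\erfc$ and $\erf$ expressions) is inherited unchanged from Theorem~\ref{thm:ldp1}.
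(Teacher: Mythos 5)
Your proposal is correct and follows essentially the same route the paper intends: the paper's own proof is a one-line appeal to the analogous derivation in \cite{Stojnicl1RegPosasymldp}, and your steps (taking $\frac{1}{n}\log$ of the finite-$n$ bound of Theorem \ref{thm:ldp1}, using the scale-invariance of $w_1,w_2,w_3$ under $c_3\rightarrow c_3\sqrt{n}$, $\gamma\rightarrow\gamma\sqrt{n}$, evaluating the spherical term by Laplace's method to get $I_{sph}$ and $\widehat{\gamma}$, and fixing $(c_3,\nu,\gamma)$ before passing to the limit so the minimizations cause no interchange issue) are precisely that derivation spelled out.
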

\begin{proof} Follows in a fashion analogous to the one employed in \cite{Stojnicl1RegPosasymldp}.
\end{proof}

Numerical solution of the above optimization problem would provide the estimates for the rate of $P_{err}$'s decay. However, we here follow \cite{Stojnicl1RegPosasymldp} and raise the bar a bit higher. Instead of relying on solving the above problem numerically we will here try to solve it explicitly. We will again follow as much of the methodology introduced in \cite{Stojnicl1RegPosasymldp} as possible. However, there will be quite a few technical differences that will also result in a behavior fundamentally different from the one observed in \cite{Stojnicl1RegPosasymldp} when it comes to some of the optimizing quantities and ultimately the estimates of the LDPs rate function. Also, we emphasize that among several different ways how one can present the final solutions we chose the one that makes them look as analogous to the corresponding ones from \cite{Stojnicl1RegPosasymldp} as possible.

\subsubsection{Handling $I_{err,u}^{(p,ub)}$}
\label{sec:analysisIerr}

We will first set
\begin{equation}\label{eq:detanalIerr1}
  A_{0}\triangleq\sqrt{1-\frac{c_3}{2\gamma}},
\end{equation}
and then consider the following optimization problem
\begin{equation}\label{eq:detanalIeer2}
I_{err,u}^{(p,ub)}(\alpha,\beta)\triangleq \min_{c_3\geq 0,\nu\geq 0,A_0\leq 1}\zeta_{\alpha,\beta}(c_3,\nu,A_0),
\end{equation}
where
\begin{eqnarray}
\zeta_{\alpha,\beta}(c_3,\nu,A_0)&=&\left (-\frac{c_3^2}{2}+I_{sph}+(1-\beta)\log{w_1}+\beta(1-\eta)\log{w_2}+\beta\eta\log{w_3}+\frac{c_3^2}{2(1-A_0^2)}\right )\nonumber \\
I_{sph} &=& \widehat{\gamma}c_3-\frac{\alpha }{2}\log\left (1-\frac{c_3}{2\widehat{\gamma}}\right )\nonumber \\
  \widehat{\gamma} &=& \frac{c_3-\sqrt{(c_3)^2+4\alpha}}{4}\nonumber \\
w_1 &=&
  \frac{e^{\frac{(1-A_0^2)\nu^2}{2A_0^2}}}{A_0}\erfc\left (\frac{\nu}{\sqrt{2}A_0}\right )+\erf\left (\frac{\nu}{\sqrt{2}}\right )\nonumber \\
  w_2 &=&
  \frac{e^{\frac{(1-A_0^2)\nu^2}{2A_0^2}}}{A_0}\nonumber \\
    w_3 &=&
  \frac{1}{A_0}.\label{eq:detanalIeer3}
\end{eqnarray}
Our goal below will be to compute the derivatives of $\zeta_{\alpha,\beta}(c_3,\nu,A_0)$ with respect to $c_3$, $\nu$, and $A_0$ to eventually solve
\begin{equation}\label{eq:detanalIeer3a}
  \frac{d \zeta_{\alpha,\beta}(c_3,\nu,A_0)}{dc_3}=  \frac{d \zeta_{\alpha,\beta}(c_3,\nu,A_0)}{d\nu}=  \frac{d \zeta_{\alpha,\beta}(c_3,\nu,A_0)}{dA_0}=0.
\end{equation}
We will start with the derivative with respect to $c_3$ and observe that for this derivative from \cite{Stojnicl1RegPosasymldp} one has
\begin{equation}
\frac{d\zeta_{\alpha,\beta}(c_3,\nu,A_0)}{dc_3}=-c_3+\frac{c_3}{1-A_0^2}+\frac{c_3-\sqrt{(c_3)^2+4\alpha}}{2}.
\label{eq:detanalIeer9}
\end{equation}
Setting the above derivative to zero implies
\begin{equation}\label{eq:detanalIeer12}
  c_3=\frac{(1-A_0^2)\sqrt{\alpha}}{A_0}.
\end{equation}
For the derivative with respect to $\nu$ we have
\begin{eqnarray}
\frac{d\zeta_{\alpha,\beta}(c_3,\nu,A_0)}{d\nu}&=&\frac{d}{d\nu}\left (-\frac{c_3^2}{2}+I_{sph}+(1-\beta)\log{w_1}+\beta(1-\eta)\log{w_2}+\beta\eta\log{w_3}+\frac{c_3^2}{2(1-A_0^2)}\right )\nonumber \\
&=& \frac{\beta(1-A_0^2)\nu(1-\eta)}{A_0^2}+\frac{1-\beta}{w_1}\left (\frac{(1-A_0^2)\nu}{A_0^2}\frac{e^{\frac{(1-A_0^2)\nu^2}{2A_0^2}}}{A_0}\erfc\left (\frac{\nu}{\sqrt{2}A_0}\right )-\frac{e^{\frac{(1-A_0^2)\nu^2}{2A_0^2}}}{A_0^2}\frac{2e^{-\frac{\nu^2}{2A_0^2}}}{\sqrt{2}\sqrt{\pi}}\right)\nonumber \\
&&+\frac{1-\beta}{w_1}\frac{2e^{-\frac{\nu^2}{2}}}{\sqrt{2}\sqrt{\pi}}\nonumber \\
&=& \frac{\beta(1-A_0^2)\nu(1-\eta)}{A_0^2}+\frac{1-\beta}{w_1}\left (\frac{(1-A_0^2)\nu}{A_0^2}\frac{e^{\frac{(1-A_0^2)\nu^2}{2A_0^2}}}{A_0}\erfc\left (\frac{\nu}{\sqrt{2}A_0}\right )-\frac{1-A_0^2}{A_0^2}\frac{2e^{-\frac{\nu^2}{2}}}{\sqrt{2}\sqrt{\pi}}\right)\nonumber \\
&=& \frac{1-A_0^2}{w_1A_0^2}\left (\beta(1-\eta)\nu\erf\left (\frac{\nu}{\sqrt{2}}\right )+\frac{(1-\beta\eta)\nu}{A_0}\frac{\erfc\left (\frac{\nu}{\sqrt{2}A_0}\right )}{e^{-\frac{(1-A_0^2)\nu^2}{2A_0^2}}}-(1-\beta)\frac{2e^{-\frac{\nu^2}{2}}}{\sqrt{2}\sqrt{\pi}}\right)\nonumber \\
&=& \frac{(1-A_0^2)e^{-\frac{\nu^2}{2}}}{w_1A_0^3}\left (\beta(1-\eta)\nu\erf\left (\frac{\nu}{\sqrt{2}}\right )A_0e^{\frac{\nu^2}{2}}+(1-\beta\eta)\nu\frac{\erfc\left (\frac{\nu}{\sqrt{2}A_0}\right )}{e^{-\frac{\nu^2}{2A_0^2}}}-(1-\beta)\sqrt{\frac{2}{\pi}}A_0\right).\nonumber \\
\label{eq:detanalIeer4}
\end{eqnarray}
To further transform the above derivative we will need the derivative with respect to $A_0$ as well. To that end we have
\begin{eqnarray}
\frac{d\zeta_{\alpha,\beta}(c_3,\nu,A_0)}{dA_0}&=&\frac{d}{dA_0}\left (-\frac{c_3^2}{2}+I_{sph}+(1-\beta)\log{w_1}+\beta(1-\eta)\log{w_2}+\beta\eta\log{w_3}+\frac{c_3^2}{2(1-A_0^2)}\right )\nonumber \\
&=& (1-\beta)\frac{d\log{w_1}}{dA_0}+\beta(1-\eta)\left (\nu^2\frac{d}{dA_0}\left (\frac{1-A_0^2}{2A_0^2}\right )-\frac{1}{A_0}\right )-\frac{\beta\eta}{A_0}+\frac{c_3^2A_0}{(1-A_0^2)^2}\nonumber \\
&=& (1-\beta)\frac{d\log{w_1}}{dA_0}-\frac{\beta(1-\eta)\nu^2}{A_0^3}-\frac{\beta A_0^2}{A_0^3}+\frac{c_3^2A_0}{(1-A_0^2)^2}\nonumber \\
&=& (1-\beta)\frac{d\log{w_1}}{dA_0}-\frac{\beta(1-\eta)\nu^2}{A_0^3}+\frac{(\alpha-\beta) A_0^2}{A_0^3}.\nonumber \\
\label{eq:detanalIeer10}
\end{eqnarray}
From \cite{Stojnicl1RegPosasymldp} we have
\begin{eqnarray}
\frac{d\log{w_1}}{dA_0}=\frac{d\log{(\frac{1}{A_0}e^{\frac{\nu^2}{2A_0^2}}\erfc(\frac{\nu}{\sqrt{2}A_0})+e^{\frac{\nu^2}{2}}\erf(\frac{\nu}{\sqrt{2}}))}}{dA_0}=
-\frac{e^{\frac{\nu^2}{2A_0^2}}(A_0^2+\nu^2)\erfc(\frac{\nu}{\sqrt{2}A_0})-\sqrt{\frac{2}{\pi}}A_0\nu}
{A_0^3(e^{\frac{\nu^2}{2A_0^2}}\erfc(\frac{\nu}{\sqrt{2}A_0})+A_0e^{\frac{\nu^2}{2}}\erf(\frac{\nu}{\sqrt{2}}))}.\nonumber \\
\label{eq:detanalIeer11}
\end{eqnarray}
Combining (\ref{eq:detanalIeer10}) and (\ref{eq:detanalIeer11}) one obtains
\begin{eqnarray}
\frac{d\zeta_{\alpha,\beta}(c_3,\nu,A_0)}{dA_0}&=& (1-\beta)\frac{d\log{w_1}}{dA_0}-\frac{\beta(1-\eta)\nu^2}{A_0^3}+\frac{(\alpha-\beta) A_0^2}{A_0^3}\nonumber \\
&=& -(1-\beta)\frac{e^{\frac{\nu^2}{2A_0^2}}(A_0^2+\nu^2)\erfc(\frac{\nu}{\sqrt{2}A_0})-\sqrt{\frac{2}{\pi}}A_0\nu}
{A_0^3(e^{\frac{\nu^2}{2A_0^2}}\erfc(\frac{\nu}{\sqrt{2}A_0})+A_0e^{\frac{\nu^2}{2}}\erf(\frac{\nu}{\sqrt{2}}))}
-\frac{\beta(1-\eta)\nu^2}{A_0^3}+\frac{(\alpha-\beta) A_0^2}{A_0^3}.\nonumber \\
\label{eq:detanalIeer10a}
\end{eqnarray}
Transforming a bit further one also has
\begin{multline}
\frac{d\zeta_{\alpha,\beta}(c_3,\nu,A_0)}{dA_0}
= -(1-\beta)\frac{e^{\frac{\nu^2}{2A_0^2}}(A_0^2+\nu^2)\erfc(\frac{\nu}{\sqrt{2}A_0})-\sqrt{\frac{2}{\pi}}A_0\nu}
{A_0^3(e^{\frac{\nu^2}{2A_0^2}}\erfc(\frac{\nu}{\sqrt{2}A_0})+A_0e^{\frac{\nu^2}{2}}\erf(\frac{\nu}{\sqrt{2}}))}
-\frac{\beta(1-\eta)\nu^2}{A_0^3}+\frac{(\alpha-\beta) A_0^2}{A_0^3} \\
=\frac{((\alpha-1)A_0^2-\nu^2(1-\beta\eta))e^{\frac{\nu^2}{2A_0^2}}\erfc(\frac{\nu}{\sqrt{2}A_0})+(1-\beta)\sqrt{\frac{2}{\pi}}A_0\nu
+((\alpha-\beta)A_0^2-\beta(1-\eta)\nu^2)A_0e^{\frac{\nu^2}{2}}\erf(\frac{\nu}{\sqrt{2}})}
{A_0^3(e^{\frac{\nu^2}{2A_0^2}}\erfc(\frac{\nu}{\sqrt{2}A_0})+A_0e^{\frac{\nu^2}{2}}\erf(\frac{\nu}{\sqrt{2}}))}.
\\
\label{eq:detanalIeer10a}
\end{multline}
Now, setting the derivative with respect to $\nu$ in (\ref{eq:detanalIeer11}) to zero gives
\begin{equation}\label{eq:detanalIeer10b}
  \frac{d\zeta_{\alpha,\beta}(c_3,\nu,A_0)}{d\nu}= \frac{(1-A_0^2)e^{-\frac{\nu^2}{2}}}{w_1A_0^3}\left (\beta(1-\eta)\nu\erf\left (\frac{\nu}{\sqrt{2}}\right )A_0e^{\frac{\nu^2}{2}}+(1-\beta\eta)\nu\frac{\erfc\left (\frac{\nu}{\sqrt{2}A_0}\right )}{e^{-\frac{\nu^2}{2A_0^2}}}-(1-\beta)\sqrt{\frac{2}{\pi}}A_0\right)=0. \\
\end{equation}
From (\ref{eq:detanalIeer10b}) one then finds
\begin{equation}\label{eq:detanalIeer10c}
-\beta(1-\eta)\nu\erf\left (\frac{\nu}{\sqrt{2}}\right )A_0e^{\frac{\nu^2}{2}}+(1-\beta)\sqrt{\frac{2}{\pi}}A_0=
(1-\beta\eta)\nu\frac{\erfc\left (\frac{\nu}{\sqrt{2}A_0}\right )}{e^{-\frac{\nu^2}{2A_0^2}}}.
\end{equation}
Combining (\ref{eq:detanalIeer10a}) and (\ref{eq:detanalIeer10c}) we finally have
\begin{eqnarray}\label{eq:detanalIeer10d}
\frac{d\zeta_{\alpha,\beta}(c_3,\nu,A_0)}{dA_0}
& = & \frac{(\alpha-1)A_0^2e^{\frac{\nu^2}{2A_0^2}}\erfc(\frac{\nu}{\sqrt{2}A_0})
+((\alpha-\beta)A_0^2)A_0e^{\frac{\nu^2}{2}}\erf(\frac{\nu}{\sqrt{2}})}
{A_0^3(e^{\frac{\nu^2}{2A_0^2}}\erfc(\frac{\nu}{\sqrt{2}A_0})+A_0e^{\frac{\nu^2}{2}}\erf(\frac{\nu}{\sqrt{2}}))}\nonumber \\
& = & \frac{(\alpha-1)e^{\frac{\nu^2}{2A_0^2}}\erfc(\frac{\nu}{\sqrt{2}A_0})
+(\alpha-\beta)A_0e^{\frac{\nu^2}{2}}\erf(\frac{\nu}{\sqrt{2}})}
{A_0(e^{\frac{\nu^2}{2A_0^2}}\erfc(\frac{\nu}{\sqrt{2}A_0})+A_0e^{\frac{\nu^2}{2}}\erf(\frac{\nu}{\sqrt{2}}))}.
\end{eqnarray}
Setting the above derivative with respect to $A_0$ to zero we obtain
\begin{eqnarray}\label{eq:detanalIeer10e}
\frac{d\zeta_{\alpha,\beta}(c_3,\nu,A_0)}{dA_0}
& = & (\alpha-1)e^{\frac{\nu^2}{2A_0^2}}\erfc(\frac{\nu}{\sqrt{2}A_0})
+(\alpha-\beta)A_0e^{\frac{\nu^2}{2}}\erf(\frac{\nu}{\sqrt{2}})=0\nonumber \\
 \Longleftrightarrow \quad  e^{\frac{\nu^2}{2A_0^2}}\erfc(\frac{\nu}{\sqrt{2}A_0}) & = &
\frac{\alpha-\beta}{1-\alpha}A_0e^{\frac{\nu^2}{2}}\erf(\frac{\nu}{\sqrt{2}}).
\end{eqnarray}
A combination of (\ref{eq:detanalIeer10b}) and (\ref{eq:detanalIeer10e}) gives
\begin{eqnarray}\label{eq:detanalIeer10f}
  \frac{d\zeta_{\alpha,\beta}(c_3,\nu,A_0)}{d\nu}& = & \frac{(1-A_0^2)e^{-\frac{\nu^2}{2}}}{w_1A_0^3}\left (\beta(1-\eta)\nu\erf\left (\frac{\nu}{\sqrt{2}}\right )A_0e^{\frac{\nu^2}{2}}+(1-\beta\eta)\nu\frac{\erfc\left (\frac{\nu}{\sqrt{2}A_0}\right )}{e^{-\frac{\nu^2}{2A_0^2}}}-(1-\beta)\sqrt{\frac{2}{\pi}}A_0\right)\nonumber \\
& = & \frac{(1-A_0^2)e^{-\frac{\nu^2}{2}}}{w_1A_0^3}\left (\lp\beta(1-\eta)+\frac{(1-\beta\eta)(\alpha-\beta)}{1-\alpha}\rp\nu\erf\left (\frac{\nu}{\sqrt{2}}\right )A_0e^{\frac{\nu^2}{2}}-(1-\beta)\sqrt{\frac{2}{\pi}}A_0\right)\nonumber \\
& = & \frac{(1-A_0^2)e^{-\frac{\nu^2}{2}}}{w_1A_0^3}\left (\lp\frac{(1-\beta)(\alpha-\eta\beta)}{1-\alpha}\rp\nu\erf\left (\frac{\nu}{\sqrt{2}}\right )A_0e^{\frac{\nu^2}{2}}-(1-\beta)\sqrt{\frac{2}{\pi}}A_0\right)\nonumber \\
& = & 0.
\end{eqnarray}
Combining further(\ref{eq:detanalIeer10e}) and (\ref{eq:detanalIeer10f}) one can finally get the solution to our initial problem. In the following subsection we highlight a couple of special features of the obtained solution that are in a sharp contrast when compared to the similar ones associated with the corresponding quantities of the standard $\ell_1$'s LDP considered in \cite{Stojnicl1RegPosasymldp}.

\subsubsection{Emergence of $\beta_0$ and $\beta_1$}
\label{sec:emerbeta0beta1}

From (\ref{eq:detanalIeer10f}) we have
\begin{eqnarray}\label{eq:emerb0b1a}
\lp\frac{(1-\beta)(\alpha-\eta\beta)}{1-\alpha}\rp\nu\erf\left (\frac{\nu}{\sqrt{2}}\right )A_0e^{\frac{\nu^2}{2}}-(1-\beta)\sqrt{\frac{2}{\pi}}A_0 &= &0\nonumber \\
\Longleftrightarrow  \frac{(1-\alpha)\sqrt{\frac{2}{\pi}}e^{-\frac{\nu^2}{2}}}{(\alpha-\eta\beta)\nu\erf\left (\frac{\nu}{\sqrt{2}}\right )} & = &1.
\end{eqnarray}
To highlight the difference between the partial $\ell_1$ and the standard $\ell_1$ LDPs we will instead of $\nu$ and $A_0$ rely on two new quantities $\beta_0$ and $\beta_1$. For $\nu$ that satisfies (\ref{eq:emerb0b1a}), we first introduce $\beta_1$ by setting
\begin{equation}\label{eq:emerb0b1b}
  \nu\triangleq\sqrt{2}\erfinv\lp\frac{1-\alpha}{1-\beta_1}\rp.
\end{equation}
In other words, we let $\beta_1$ be such that
\begin{equation}\label{eq:emerb0b1c}
  \frac{(1-\beta_1)\sqrt{\frac{2}{\pi}}e^{-\frac{\lp\sqrt{2}\erfinv\lp\frac{1-\alpha}{1-\beta_1}\rp\rp^2}{2}}}{(\alpha-\eta\beta)\sqrt{2}\erfinv\lp\frac{1-\alpha}{1-\beta_1}\rp
  }  = 1.
\end{equation}
From (\ref{eq:detanalIeer10c}) and (\ref{eq:detanalIeer10e}) we, for $\nu$ defined through (\ref{eq:emerb0b1b}) and (\ref{eq:emerb0b1c}), have
\begin{eqnarray}\label{eq:emerb0b1d}
e^{\frac{\nu^2}{2A_0^2}}\erfc(\frac{\nu}{\sqrt{2}A_0}) & = &
\frac{\alpha-\beta}{1-\alpha}A_0e^{\frac{\nu^2}{2}}\erf(\frac{\nu}{\sqrt{2}})\nonumber \\
\Longleftrightarrow  \frac{(\alpha-\beta)\sqrt{\frac{2}{\pi}}A_0e^{-\frac{\nu^2}{2A_0^2}}}{(\alpha-\eta\beta)\nu\erfc\left (\frac{\nu}{\sqrt{2}A_0}\right )} & = &1.
\end{eqnarray}
For $\nu$ that satisfies (\ref{eq:emerb0b1a}) and $A_0$ that satisfies (\ref{eq:emerb0b1d}) for such a $\nu$, we introduce $\beta_0$ by setting
\begin{equation}\label{eq:emerb0b1e}
  \nu\triangleq\sqrt{2}A_0\erfinv\lp\frac{1-\alpha}{1-\beta_0}\rp.
\end{equation}
In other words, we let $\beta_0$ be such that
\begin{equation}\label{eq:emerb0b1f}
\frac{(\alpha-\beta)(1-\beta_0)\sqrt{\frac{2}{\pi}}e^{-\frac{\nu^2}{2A_0^2}}}
{(\alpha-\beta_0)(\alpha-\eta\beta)\sqrt{2}\erfinv\lp\frac{1-\alpha}{1-\beta_0}\rp} = 1.
\end{equation}
Utilizing the fundamental characterization of the partial $\ell_1$ one can further say that $\beta_1$ is such that
\begin{equation}\label{eq:emerb0b1g}
  \frac{\alpha-\eta\beta_1}{\alpha-\eta\beta}
\xi^{(p)}_{\alpha,\eta}(\beta_1)=\frac{(1-\beta_1)\sqrt{\frac{2}{\pi}}e^{-\frac{\lp\sqrt{2}\erfinv\lp\frac{1-\alpha}{1-\beta_1}\rp\rp^2}{2}}}{(\alpha-\eta\beta)\sqrt{2}\erfinv\lp\frac{1-\alpha}{1-\beta_1}\rp
  } =1,
\end{equation}
and $\beta_0$ is such that
\begin{equation}\label{eq:emerb0b1h}
  \frac{\alpha-\eta\beta_0}{\alpha-\eta\beta}
\frac{\alpha-\beta}{\alpha-\beta_0}\xi^{(p)}_{\alpha,\eta}(\beta_0)=\frac{(\alpha-\beta)(1-\beta_0)\sqrt{\frac{2}{\pi}}e^{-\frac{\nu^2}{2A_0^2}}}
{(\alpha-\beta_0)(\alpha-\eta\beta)\sqrt{2}\erfinv\lp\frac{1-\alpha}{1-\beta_0}\rp}=1.
\end{equation}
For such $\beta_0$ and $\beta_1$ one then sets
\begin{eqnarray}\label{eq:emerb0b1h1}
  \nu &  = &\sqrt{2}\erfinv \left (\frac{1-\alpha}{1-\beta_1}\right )\nonumber \\
  A_0&  = &\frac{\erfinv \left (\frac{1-\alpha}{1-\beta_1}\right )}{\erfinv \left (\frac{1-\alpha}{1-\beta_0}\right )}=\frac{\nu}{\sqrt{2}\erfinv \left (\frac{1-\alpha}{1-\beta_0}\right )}\nonumber \\
  c_3 &= &\frac{(1-A_0^2)\sqrt{\alpha}}{A_0}=\frac{\left (\erfinv \left (\frac{1-\alpha}{1-\beta_0}\right )\right )^2- \left (\erfinv\left (\frac{1-\alpha}{1-\beta_1}\right )\right )^2}{\erfinv \left (\frac{1-\alpha}{1-\beta_0}\right )\erfinv \left (\frac{1-\alpha}{1-\beta_1}\right )}\sqrt{\alpha}.
\end{eqnarray}
The choice for $\nu$, $A_0$, and $c_3$ given in (\ref{eq:emerb0b1h1}) ensures that (\ref{eq:detanalIeer3a}) is satisfied. Moreover, as it will turn out later on this choice is unique, i.e. it is the only solution of (\ref{eq:detanalIeer3a}). With a little bit of additional juggling one can then argue that this choice is not only a stationary point, but also a global optimum in (\ref{eq:detanalIeer2}). We skip such an exercise as these things will automatically follow through another set of considerations that we will present later on (in fact, quite a lot more will turn out to be true, not only will the choice for $\nu$, $A_0$, and $c_3$ given in (\ref{eq:emerb0b1h1}) turn out to be precisely the one that solves the optimization in (\ref{eq:detanalIeer2}) but also precisely the one that determines $I^{(p)}_{err}(\alpha,\beta)$). Here though, we would like to point out the key property regarding the above choice of $\nu$, $A_0$, and $c_3$. Namely, carefully looking at the above expressions one can note that, differently from what was the case when we studied the standard $\ell_1$ in \cite{Stojnicl1RegPosasymldp}, here the quantity $\beta_1$ emerges as a completely new object. Its an analogue for the standard $\ell_1$ case was $\beta_w$, i.e. the so-called weak sparsity threshold for given $\alpha$. As such, $\beta_w$ was obviously only a function of $\alpha$ and not a function of $\beta$. However, such a scenario does not repeat itself when it comes to the partial $\ell_1$. For the partial $\ell_1$, $\beta_1$ does depend on both $\alpha$ and $\beta$. This then implies that the optimal $\nu$ in (\ref{eq:detanalIeer2}) also depends on both $\alpha$ and $\beta$. It is in our view quite remarkable that in such a fully dependent scenario a closed form solution of (\ref{eq:detanalIeer2}) could still be obtained.

In the following subsection we compute the value of $\zeta_{\alpha,\beta}(c_3,\nu,A_0)$ that one gets if $c_3$, $\nu$, and $A_0$ are as in (\ref{eq:emerb0b1h1}). As stated above, and as will turn out later on, this  value of $\zeta_{\alpha,\beta}(c_3,\nu,A_0)$ will be precisely the $I^{(p)}_{err}(\alpha,\beta)$ from (\ref{eq:ldpasymp1}) and Theorem \ref{thm:ldp2}.

\subsubsection{Computing $\zeta_{\alpha,\beta}(c_3,\nu,A_0)$}
\label{sec:computingzeta}

In order to compute $\zeta_{\alpha,\beta}(c_3,\nu,A_0)$ we will first compute $\widehat{\gamma}$, $I_{sph}$, $w_1$, $w_2$, and $w_3$. In fact,  $\widehat{\gamma}$ and $I_{sph}$ have already been computed in \cite{Stojnicl1RegPosasymldp} and we for them immediately have
\begin{eqnarray}\label{eq:computingzeta1}
  \widehat{\gamma}&=&-\frac{A_0\sqrt{\alpha}}{2},\nonumber \\
I_{sph} &=&-\frac{(1-A_0^2)\alpha}{2}+\alpha\log(A_0).
\end{eqnarray}
For $w_1$ we have
\begin{equation}\label{eq:computingzeta3}
w_1 =
  \frac{e^{\frac{(1-A_0^2)\nu^2}{2A_0^2}}}{A_0}\erfc\left (\frac{\nu}{\sqrt{2}A_0}\right )+\erf\left (\frac{\nu}{\sqrt{2}}\right )
  =\frac{\alpha-\beta}{(\alpha-\eta\beta)\nu}\sqrt{\frac{2}{\pi}}e^{-\frac{\nu^2}{2}}+\erf\left (\frac{\nu}{\sqrt{2}}\right )
  =\frac{\alpha-\beta}{1-\beta_1}+\frac{1-\alpha}{1-\beta_1}=\frac{1-\beta}{1-\beta_1},
\end{equation}
where the second equality follows from (\ref{eq:emerb0b1d}) while the third equality follows from (\ref{eq:emerb0b1b}) and (\ref{eq:emerb0b1c}). One then also easily has
\begin{equation}\label{eq:computingzeta4}
  w_2 =
  \frac{e^{\frac{(1-A_0^2)\nu^2}{2A_0^2}}}{A_0}=\frac{\alpha-\beta}{1-\beta_1}\frac{1}{\erfc\left (\frac{\nu}{\sqrt{2}A_0}\right )}
  =\frac{\alpha-\beta}{1-\beta_1}\frac{1}{1-\erf\left (\frac{\nu}{\sqrt{2}A_0}\right )}
  =\frac{\alpha-\beta}{1-\beta_1}\frac{1}{1-\frac{1-\alpha}{1-\beta_0}}
  =\frac{\alpha-\beta}{\alpha-\beta_0}\frac{1-\beta_0}{1-\beta_1},
\end{equation}
and obviously
\begin{equation}\label{eq:computingzeta4a}
  w_3 =\frac{1}{A_0}.
\end{equation}
A combination of (\ref{eq:computingzeta1}), (\ref{eq:computingzeta3}), (\ref{eq:computingzeta4}), and (\ref{eq:computingzeta4a}) then gives
\begin{eqnarray}\label{eq:computingzeta4b}
\zeta_{\alpha,\beta}(c_3,\nu,A_0)&=&\left (-\frac{c_3^2}{2}+I_{sph}+(1-\beta)\log{w_1}+\beta(1-\eta)\log{w_2}
+\beta\eta\log{w_3}+\frac{c_3^2}{2(1-A_0^2)}\right )\nonumber \\
&=&\left (\frac{c_3^2A_0^2}{2(1-A_0^2)}-\frac{(1-A_0^2)\alpha}{2}+\alpha\log(A_0)+(1-\beta)\log{w_1}+\beta(1-\eta)\log{w_2}
+\beta\eta\log{w_3}\right )\nonumber \\
&=& (\alpha-\eta\beta)\log(A_0)+(1-\beta)\log{w_1}+\beta(1-\eta)\log{w_2}
\nonumber \\
&=& (\alpha-\eta\beta)\log\left (\frac{\erfinv\left (\frac{1-\alpha}{1-\beta_1}\right )}{\erfinv\left (\frac{1-\alpha}{1-\beta_0}\right )}\right )+(1-\beta)\log{\left (\frac{1-\beta}{1-\beta_1}\right )}+\beta(1-\eta)\log{\left (\frac{\alpha-\beta}{\alpha-\beta_0}\frac{1-\beta_0}{1-\beta_1}\right )}.\nonumber \\
\end{eqnarray}
We summarize the above results in the following theorem.
\begin{theorem}
Assume the setup of Theorem \ref{thm:ldp2} and assume that a pair $(\alpha,\beta)$  is given. Let $\alpha>\alpha_w$ where $\alpha_w$ is such that $\psi^{(p)}_{\beta,\eta}(\alpha_w)=\xi^{(p)}_{\alpha_w}(\beta)=1$. Also let $\beta_1$ and $\beta_0$ satisfy the following \textbf{fundamental characterizations of the partial $\ell_1$'s LDP:}

\begin{equation}
  \frac{\alpha-\eta\beta_1}{\alpha-\eta\beta}
\xi^{(p)}_{\alpha,\eta}(\beta_1)=\frac{(1-\beta_1)\sqrt{\frac{2}{\pi}}e^{-\lp\erfinv\lp\frac{1-\alpha}{1-\beta_1}\rp\rp^2}}{(\alpha-\eta\beta)\sqrt{2}\erfinv\lp\frac{1-\alpha}{1-\beta_1}\rp
  } =1,
\label{eq:thmldp3l1PT}
\end{equation}

\noindent and

\begin{equation}
  \frac{\alpha-\eta\beta_0}{\alpha-\eta\beta}
\frac{\alpha-\beta}{\alpha-\beta_0}\xi^{(p)}_{\alpha,\eta}(\beta_0)=\frac{(\alpha-\beta)(1-\beta_0)\sqrt{\frac{2}{\pi}}e^{-\lp\erfinv\lp\frac{1-\alpha}{1-\beta_0}\rp\rp^2}}
{(\alpha-\beta_0)(\alpha-\eta\beta)\sqrt{2}\erfinv\lp\frac{1-\alpha}{1-\beta_0}\rp}=1.
\label{eq:thmldp3l1LDP}
\end{equation}

\noindent Then
\begin{eqnarray}
I^{(p)}_{err}(\alpha,\beta)\triangleq\lim_{n\rightarrow\infty}\frac{\log{P_{err}}}{n}
& \leq &
(\alpha-\eta\beta)\log\left (\frac{\erfinv\left (\frac{1-\alpha}{1-\beta_1}\right )}{\erfinv\left (\frac{1-\alpha}{1-\beta_0}\right )}\right )+(1-\beta)\log\left (\frac{1-\beta}{1-\beta_1}\right ) \nonumber \\
& & +\beta(1-\eta)\log\left (\frac{(\alpha-\beta)(1-\beta_0)}{(\alpha-\beta_0)(1-\beta_1)}\right )\triangleq I^{(p)}_{ldp}(\alpha,\beta).
\label{eq:ldpthm3Ierrub1}
\end{eqnarray}
Moreover, the following choice for $\nu$, $c_3$, and $\gamma$ in the optimization problem in Theorem \ref{thm:ldp2} achieves the right hand side of (\ref{eq:ldpthm3Ierrub1})
\begin{eqnarray}
  \nu &  = & \sqrt{2}\erfinv \left (\frac{1-\alpha}{1-\beta_1}\right )\nonumber \\
    A_0&=&\frac{\erfinv \left (\frac{1-\alpha}{1-\beta_1}\right )}{\erfinv \left (\frac{1-\alpha}{1-\beta_0}\right )}=\frac{\nu}{\sqrt{2}\erfinv \left (\frac{1-\alpha}{1-\beta_0}\right )}\nonumber \\
      c_3& = &\frac{(1-A_0^2)\sqrt{\alpha}}{A_0}=\frac{\left (\erfinv \left (\frac{1-\alpha}{1-\beta_0}\right )\right )^2- \left (\erfinv\left (\frac{1-\alpha}{1-\beta_1}\right )\right )^2}{\erfinv \left (\frac{1-\alpha}{1-\beta_0}\right )\erfinv \left (\frac{1-\alpha}{1-\beta_1}\right )}\sqrt{\alpha}\nonumber \\
      \gamma&=&\frac{c_3}{2(1-A_0^2)}=\frac{\sqrt{\alpha}}{2A_0}=\frac{\sqrt{\alpha}\erfinv \left (\frac{1-\alpha}{1-\beta_0}\right )}{2\erfinv \left (\frac{1-\alpha}{1-\beta_1}\right )}.
\label{eq:ldpthm3perrub2}
\end{eqnarray}\label{thm:ldp3}
\end{theorem}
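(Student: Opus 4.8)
The plan is to assemble into one chain of inequalities the ingredients already prepared in Sections~\ref{sec:analysisIerr}--\ref{sec:computingzeta}. The entry point is Theorem~\ref{thm:ldp2}, which reduces bounding $I^{(p)}_{err}(\alpha,\beta)$ to bounding the scalar program $\min_{c_3\geq 0,\nu\geq 0,A_0\leq 1}\zeta_{\alpha,\beta}(c_3,\nu,A_0)$, where $A_0=\sqrt{1-c_3/(2\gamma)}$ and the constraint $\gamma\geq c_3/2$ is exactly $A_0\leq 1$. Since that bound is an infimum, to obtain (\ref{eq:ldpthm3Ierrub1}) it suffices to exhibit one feasible triple whose $\zeta$-value equals $I^{(p)}_{ldp}(\alpha,\beta)$; global optimality of the triple is not needed for the inequality and, as announced in Section~\ref{sec:emerbeta0beta1}, is left to the later geometric treatment. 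The data displayed in (\ref{eq:ldpthm3perrub2}) will be precisely this triple, so the ``moreover'' clause follows once the evaluation is carried out.

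First I would check that, under the hypothesis $\alpha>\alpha_w$, the equations (\ref{eq:thmldp3l1PT}) and (\ref{eq:thmldp3l1LDP}) have solutions $\beta_1$ and $\beta_0$ with $\beta<\beta_1\leq\beta_0<\alpha$. Rewriting (\ref{eq:thmldp3l1PT}) as $\xi^{(p)}_{\alpha,\eta}(\beta_1)=\frac{\alpha-\eta\beta}{\alpha-\eta\beta_1}$, the left side is the decreasing function of $\beta_1$ analysed in Section~\ref{sec:propxi1} while the right side is increasing in $\beta_1$; at $\beta_1=\beta$ the left side equals $\xi^{(p)}_{\alpha,\eta}(\beta)>1$ (this is where $\alpha>\alpha_w$, i.e. $\psi^{(p)}_{\beta,\eta}(\alpha)>1$, is used) and the right side equals $1$, and at $\beta_1=\beta_w$ the left side is $1$ while the right side exceeds $1$; strict monotonicity of the difference then gives a unique root $\beta_1\in(\beta,\beta_w)$. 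An analogous sign comparison handles (\ref{eq:thmldp3l1LDP}), and comparing the two equations yields $\beta_1\leq\beta_0$, whence $A_0=\erfinv(\frac{1-\alpha}{1-\beta_1})/\erfinv(\frac{1-\alpha}{1-\beta_0})\in(0,1]$ and the quantities $\nu,c_3,\gamma$ in (\ref{eq:ldpthm3perrub2}) are well-defined and nonnegative, i.e. the triple $(c_3,\nu,A_0)$ is feasible for the program of Theorem~\ref{thm:ldp2}.

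Next I would verify that this triple solves the stationarity system (\ref{eq:detanalIeer3a}); this is exactly the content of Section~\ref{sec:emerbeta0beta1}, where the definition of $\beta_1$ annihilates the $\nu$-derivative via (\ref{eq:detanalIeer10f}), the definition of $\beta_0$ annihilates the $A_0$-derivative via (\ref{eq:detanalIeer10e}), and $c_3=(1-A_0^2)\sqrt{\alpha}/A_0$ annihilates the $c_3$-derivative via (\ref{eq:detanalIeer12}). Then I would evaluate $\zeta_{\alpha,\beta}$ at this triple: using $\widehat{\gamma}=-A_0\sqrt{\alpha}/2$ and $I_{sph}=-\frac{(1-A_0^2)\alpha}{2}+\alpha\log A_0$ together with the evaluations $w_1=\frac{1-\beta}{1-\beta_1}$, $w_2=\frac{\alpha-\beta}{\alpha-\beta_0}\frac{1-\beta_0}{1-\beta_1}$, $w_3=1/A_0$ from (\ref{eq:computingzeta3})--(\ref{eq:computingzeta4a}), the $c_3^2$ terms combine with $I_{sph}$ and $\beta\eta\log w_3$ as in (\ref{eq:computingzeta4b}) to leave $(\alpha-\eta\beta)\log A_0+(1-\beta)\log w_1+\beta(1-\eta)\log w_2$, which on substituting the closed forms is $I^{(p)}_{ldp}(\alpha,\beta)$. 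Chaining $I^{(p)}_{err}(\alpha,\beta)\leq I_{err,u}^{(p,ub)}(\alpha,\beta)\leq\zeta_{\alpha,\beta}(c_3,\nu,A_0)=I^{(p)}_{ldp}(\alpha,\beta)$ proves (\ref{eq:ldpthm3Ierrub1}), and since the triple used is exactly (\ref{eq:ldpthm3perrub2}), the ``moreover'' clause is established as well.

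I expect the only genuinely nontrivial step to be the first one: the existence, uniqueness, and ordering $\beta<\beta_1\leq\beta_0<\alpha$ of the roots of (\ref{eq:thmldp3l1PT})--(\ref{eq:thmldp3l1LDP}). The extra rational factors $\frac{\alpha-\eta\beta_1}{\alpha-\eta\beta}$ and $\frac{\alpha-\eta\beta_0}{\alpha-\eta\beta}\frac{\alpha-\beta}{\alpha-\beta_0}$ that distinguish these equations from the bare partial $\ell_1$ PT equation must be controlled by monotonicity estimates slightly beyond those recorded in Section~\ref{sec:propxi1}; everything downstream is bookkeeping already performed in the preceding subsections, and the fact that the obtained triple is not merely stationary but actually the global minimizer (hence that the bound is the genuine $I^{(p)}_{err}$) is deliberately postponed to the companion geometric analysis.
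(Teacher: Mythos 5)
Your proposal is correct and follows essentially the same route as the paper: the paper's proof of Theorem \ref{thm:ldp3} is exactly the material of Sections \ref{sec:analysisIerr}--\ref{sec:computingzeta} (stationarity via (\ref{eq:detanalIeer12}), (\ref{eq:detanalIeer10e}), (\ref{eq:detanalIeer10f}), evaluation of $\zeta_{\alpha,\beta}$ in (\ref{eq:computingzeta4b})), with global optimality and the matching lower bound deferred to the high-dimensional geometry section, just as you do. Your added front-end discussion of existence, uniqueness and the ordering $\beta<\beta_1\leq\beta_0<\alpha$ (hence feasibility $A_0\leq 1$, $c_3\geq 0$ in the upper tail) is a sound supplement to what the paper only treats later in Sections \ref{sec:propxibeta1}--\ref{sec:propxibeta0}, but it does not change the substance of the argument.
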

\begin{proof} Follows from the above discussion.
\end{proof}
The results obtained in the above theorem are then enough to fully characterize the upper tail of the partial $\ell_1$ LDP. In fact, they remain correct in the lower tail regime and therefore are then enough to fully characterize the entire partial $\ell_1$ LDP. We will formalize the lower tail statements in the following section and after that we will then finally present the quantitative results that one can obtain relying on what was established through the above theorem. A couple of properties of functions $\frac{\alpha-\eta\beta_1}{\alpha-\eta\beta}
\xi^{(p)}_{\alpha,\eta}(\beta_1)$ and $\frac{\alpha-\eta\beta_0}{\alpha-\eta\beta}
\frac{\alpha-\beta}{\alpha-\beta_0}\xi^{(p)}_{\alpha,\eta}(\beta_0)$ will also be discussed at that time.

\subsection{Lower tail}
\label{sec:lowertail}

%

In this section we will quickly formalize the above statements about the lower tail type of large deviations. As there is not much difference between the arguments that we will use here and those we used in \cite{Stojnicl1RegPosasymldp} we will skip repeating the details and instead just state the final results. For the lower tail we introduce a quantity complementary to the $P_{err}$ considered in the previous sections and denote it by $P_{cor}$, i.e. we set
\begin{equation}
P_{cor} \triangleq P(\min_{A\w=0,\|\w\|_2\leq 1}\sum_{i=n-k+1}^{n-\eta k} \w_i+\sum_{i=1}^{n-k}|\w_{i}|\geq 0).
\label{eq:ldpproblower}
\end{equation}
If $P_{err}$ is the probability that (\ref{eq:l1imp}) fails to produce the solution of (\ref{eq:l0}) then obviously $P_{cor}=1-P_{err}$ is the probability that (\ref{eq:l1}) does produce the solution of (\ref{eq:l0}). As in \cite{Stojnicl1RegPosasymldp} (and ultimately \cite{StojnicBlockasymldpfinn15}) we then have
\begin{equation}
P_{cor}
\leq
P(\|\g\|_2 -w(\h,S_w)-t_1\geq 0)/P(g\geq t_1)
\leq  \min_{t_1}\min_{c_3\geq 0}
Ee^{c_3\|\g\|_2}Ee^{-c_3w(\h,S_w)}e^{-c_3t_1}/P(g\geq t_1).
\label{eq:ldpprob3lower}
\end{equation}
One then views the above bound (which is valid for any integers $m$, $k$, and $n$ for which the results make sense) in the LDP sense and defines $I^{(p)}_{cor}(\alpha,\beta)$ (essentially a lower tail analogue to $I^{(p)}_{err}(\alpha,\beta)$), in the following way
\begin{equation}\label{eq:ldpasymp1lower}
  I^{(p)}_{cor}(\alpha,\beta)\triangleq\lim_{n\rightarrow\infty}\frac{\log{P_{cor}}}{n}.
\end{equation}
The following theorem is then the lower tail analogue to Theorem \ref{thm:ldp2}.
\begin{theorem}
Assume the setup of Theorem \ref{thm:ldp2}. Then
\begin{eqnarray}
I^{(p)}_{cor}(\alpha,\beta)& \triangleq & \lim_{n\rightarrow\infty}\frac{\log{P_{cor}}}{n}\nonumber \\
& \leq  &\min_{c_3\geq 0}\left (-\frac{c_3^2}{2}+I_{sph}^++\max_{\nu\geq 0,\gamma^{(s)}\geq 0} ((1-\beta\log{w_1}+\beta(1-\eta)\log{w_2}+\beta\eta\log{w_3}-c_3\gamma)\right )\triangleq I_{cor,l}^{(p,ub)}(\alpha,\beta), \nonumber \\
\label{eq:ldpthm2Icorub1}
\end{eqnarray}
where
\begin{eqnarray}
I_{sph}^+ &=& \widehat{\gamma_+}c_3-\frac{\alpha d}{2}\log\left (1-\frac{c_3}{2\widehat{\gamma_+}}\right )\nonumber \\
  \widehat{\gamma_+} &=& \frac{2c_3+\sqrt{4c_3^2+16\alpha }}{8}\nonumber \\
w_1 &=& \frac{1}{\sqrt{2\pi}}\int_{\bar{h}}e^{-\bar{h}^2/2}e^{-c_3\max(|\bar{h}|-\nu,0)^2/4/\gamma}d\bar{h}
  =\frac{e^{\frac{-c_3\nu^2/4/\gamma}{1+c_3/2/\gamma}}}{\sqrt{1+c_3/2/\gamma}}\erfc\left (\frac{\nu}{\sqrt{2}\sqrt{1+c_3/2/\gamma}}\right )+\erf\left (\frac{\nu}{\sqrt{2}}\right )\nonumber \\
  w_2 &=& \frac{1}{\sqrt{2\pi}}\int_{\bar{h}}e^{-\bar{h}^2/2}e^{-c_3(\bar{h}+\nu)^2/4/\gamma}d\bar{h}
  =\frac{e^{\frac{-c_3\nu^2/4/\gamma}{1+c_3/2/\gamma}}}{\sqrt{1+c_3/2/\gamma}}\nonumber \\
    w_3 &=& \frac{1}{\sqrt{2\pi}}\int_{\bar{h}}e^{-\bar{h}^2/2}e^{-c_3\bar{h}^2/4/\gamma}d\bar{h}
  =\frac{1}{\sqrt{1+c_3/2/\gamma}}.\label{eq:ldpthm2perrub2lower}
\end{eqnarray}\label{thm:ldp2lower}
\end{theorem}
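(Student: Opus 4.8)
The plan is to reproduce, almost verbatim, the passage from Theorem~\ref{thm:ldp1} to Theorem~\ref{thm:ldp2}, but starting from $P_{cor}$ and the bound (\ref{eq:ldpprob3lower}) in place of $P_{err}$ and (\ref{eq:ldpprob3}); this is exactly the lower-tail argument of \cite{Stojnicl1RegPosasymldp,StojnicBlockasymldpfinn15}, so most of what remains is bookkeeping with the sign of $c_3$ flipped wherever it touches $\|\g\|_2$ and $w(\h,S_w)$. Concretely, I would begin from the Chernoff bound already recorded in (\ref{eq:ldpprob3lower}),
\[
P_{cor}\leq \min_{t_1}\min_{c_3\geq 0}\; Ee^{c_3\|\g\|_2}\, Ee^{-c_3w(\h,S_w)}\, e^{-c_3t_1}/P(g\geq t_1),
\]
and evaluate the two expectations separately, using the independence of $\g$ and $\h$.

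For $Ee^{c_3\|\g\|_2}$ I would use the same Gaussian-integral linearization of the Euclidean norm, $\|\g\|_2=\min_{\gamma>0}\big(\tfrac{\|\g\|_2^2}{4\gamma}+\gamma\big)$, that produced $I_{sph}$ in Theorem~\ref{thm:ldp2}; since the exponent now carries $+c_3$, the interchange $E\min_\gamma\leq\min_\gamma E$ is still in the favourable direction, the optimal auxiliary variable becomes the $+$ branch of the quadratic, $\widehat{\gamma_+}=\tfrac{2c_3+\sqrt{4c_3^2+16\alpha}}{8}$, and one lands on $I_{sph}^+$ exactly as stated. For $Ee^{-c_3w(\h,S_w)}$ I would insert the closed form of $w(\h,S_w)$ from (\ref{eq:ldpwhSw}), split the $n$ coordinates into the three blocks of sizes $n-k$, $(1-\eta)k$ and $\eta k$ visible in $\hw$ (cf.\ (\ref{eq:defhweak})), and compute the three scalar Gaussian integrals; because the exponents are now negative the completed-square coefficient is $\tfrac{1}{2}(1+c_3/2/\gamma)$, so the integrals converge for \emph{every} $\gamma>0$ — which is precisely why the auxiliary constraint relaxes from $\gamma\geq c_3/2$ in the upper tail to merely $\gamma^{(s)}\geq 0$ here — and this reproduces the factors $w_1,w_2,w_3$ of (\ref{eq:ldpthm2perrub2lower}).

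The one genuinely non-cosmetic point is the direction of the optimization over the auxiliary pair $(\nu,\gamma)$. Writing $w(\h,S_w)=\min_{\nu\geq0,\gamma\geq0}F(\nu,\gamma,\h)$ from the first line of (\ref{eq:ldpwhSw}), we have $-c_3w=\sup_{\nu,\gamma}(-c_3F)$, hence $Ee^{-c_3w}=E\sup_{\nu,\gamma}e^{-c_3F}$, and — unlike the $\|\g\|_2$ term — the expectation and the optimization now sit on the unfavourable side of Jensen. I would handle this as in \cite{Stojnicl1RegPosasymldp}: restrict $(\nu,\gamma)$ to a fine grid of polynomial cardinality in $n$, bound $E\sup$ by (grid size)$\,\times\,\max_{\mathrm{grid}}E$, and note that the grid cardinality contributes only $O(\log n/n)\to 0$ to the exponential rate while the continuity (Lipschitzness) of $F$ makes the discretization error vanish in the limit; this converts the inner $\sup$ into the $\max_{\nu\geq0,\gamma^{(s)}\geq0}$ appearing in (\ref{eq:ldpthm2Icorub1}). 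The conditioning factor $1/P(g\geq t_1)$ and the term $e^{-c_3t_1}$ are dealt with exactly as in \cite{Stojnicl1RegPosasymldp,StojnicBlockasymldpfinn15}: $\tfrac{1}{n}\log P(g\geq t_1)\to 0$ on the relevant range of $t_1$, so after taking logarithms, dividing by $n$, applying Laplace's method to the Gaussian integrals, and minimizing over $c_3\geq0$ (the $t_1$-minimization being absorbed into this), one recovers precisely $I_{cor,l}^{(p,ub)}(\alpha,\beta)$. I expect the $E\sup$-versus-$\sup E$ reduction to be the main obstacle; everything else is a sign-flipped copy of the upper-tail computation.
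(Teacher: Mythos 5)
Your proposal is correct and follows essentially the same route the paper takes, since the paper's own proof is just a one-line deferral to the sign-flipped lower-tail argument of \cite{Stojnicl1RegPosasymldp,StojnicBlockasymldpfinn15}: starting from (\ref{eq:ldpprob3lower}), flipping the sign of $c_3$ in the Gaussian integrals (which yields $I_{sph}^+$, the relaxed constraint $\gamma\geq 0$, and the $w_1,w_2,w_3$ of (\ref{eq:ldpthm2perrub2lower})), and passing to the exponential scale. Your explicit treatment of the $E\sup$ versus $\sup E$ issue via a polynomial-cardinality discretization is exactly the standard device implicit in the cited analysis, so nothing is missing.
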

\begin{proof} Follows in exactly the same way as the result for the lower tail of the standard $\ell_1$ LDP in \cite{Stojnicl1RegPosasymldp}.
\end{proof}
To analyze and solve the above optimization one can then repeat the procedure from the previous section. Instead one can just quickly observe that
the change $c_3\rightarrow -c_3$ gives as in Section \ref{sec:analysisIerr}
\begin{equation}\label{eq:detanalIcor1}
  A_{0}\triangleq\sqrt{1-\frac{c_3}{2\gamma}},
\end{equation}
and
\begin{equation}\label{eq:detanalIcor2}
I_{cor,l}^{(p,ub)}(\alpha,\beta)\triangleq \min_{c_3\leq 0}\max_{\nu\geq 0,A_0\leq 1}\zeta_{\alpha,\beta}(c_3,\nu,A_0)
\end{equation}
where
\begin{eqnarray}
\zeta_{\alpha,\beta}(c_3,\nu,A_0)&=&\left (-\frac{c_3^2}{2}+I_{sph}^++(1-\beta)\log{w_1}+\beta\log{w_2}+\frac{c_3^2}{2(1-A_0^2)}\right )\nonumber \\
I_{sph}^+ &=& -\widehat{\gamma^+}c_3-\frac{\alpha }{2}\log\left (1+\frac{c_3}{2\widehat{\gamma^+}}\right )\nonumber \\
  \widehat{\gamma^+} &=& \frac{-c_3+\sqrt{c_3^2+4\alpha}}{4}=-\widehat{\gamma}\nonumber \\
w_1 &=&
  \frac{e^{\frac{(1-A_0^2)\nu^2}{2A_0^2}}}{A_0}\erfc\left (\frac{\nu}{\sqrt{2}A_0}\right )+\erf\left (\frac{\nu}{\sqrt{2}}\right )\nonumber \\
  w_2 &=&
  \frac{e^{\frac{(1-A_0^2)\nu^2}{2A_0^2}}}{A_0}\nonumber \\
    w_3 &=&
  \frac{1}{A_0}.\label{eq:detanalIcor3}
\end{eqnarray}
Moreover, $\zeta_{\alpha,\beta}(c_3,\nu,A_0)$ defined in (\ref{eq:detanalIcor3}) is exactly the same as the corresponding one in (\ref{eq:detanalIeer3}) which means that one can proceed with the computation of all the derivatives as earlier and the values we have chosen for $c_3$, $\nu$, $\gamma$, and $A_0$ in the upper tail regime will have the same form. The following theorem summarizes the final results (this is of course nothing but a lower tail analogue to Theorem \ref{thm:ldp3}).
\begin{theorem}
Assume the setup of Theorem \ref{thm:ldp3} and assume that a pair $(\alpha,\beta)$  is given. Differently from Theorem \ref{thm:ldp3}, let $\alpha<\alpha_w$ where $\alpha_w$ is such that $\psi^{(p)}_{\beta,\eta}(\alpha_w)=\xi^{(p)}_{\alpha_w}(\beta)=1$. Also let $\beta_1$ and $\beta_0$ satisfy the \textbf{fundamental \emph{partial} $\ell_1$'s LDP characterizations}, respectively as in Theorem \ref{thm:ldp3}. Then choosing $\nu$, $c_3$, and $\gamma$ in the optimization problem in (\ref{eq:ldpthm2Icorub1}) as $\nu$, $-c_3$, and $\gamma$  from Theorem \ref{thm:ldp3} (or equivalently, choosing $\nu$, $c_3$, and $A_0$ in the optimization problem in (\ref{eq:detanalIcor2}) as $\nu$, $c_3$, and $A_0$  from Theorem \ref{thm:ldp3}) gives
\begin{eqnarray}
\zeta_{\alpha,\beta}(c_3,\nu,A_0)& = &
(\alpha-\eta\beta)\log\left (\frac{\erfinv\left (\frac{1-\alpha}{1-\beta_1}\right )}{\erfinv\left (\frac{1-\alpha}{1-\beta_0}\right )}\right )\nonumber \\
& & +(1-\beta)\log\left (\frac{1-\beta}{1-\beta_1}\right ) +\beta(1-\eta)\log\left (\frac{(\alpha-\beta)(1-\beta_0)}{(\alpha-\beta_0)(1-\beta_1)}\right )=I^{(p)}_{ldp}(\alpha,\beta).
\label{eq:ldpthm3Icorub1}
\end{eqnarray}
\label{thm:ldp3lower}
\end{theorem}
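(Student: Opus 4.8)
The plan is to reduce Theorem \ref{thm:ldp3lower} to the computation already carried out in the upper-tail analysis of Sections \ref{sec:analysisIerr}--\ref{sec:computingzeta}, exploiting the fact that the objective $\zeta_{\alpha,\beta}(c_3,\nu,A_0)$ in (\ref{eq:detanalIcor3}) is \emph{literally the same function} of $(c_3,\nu,A_0)$ as the one in (\ref{eq:detanalIeer3}). The only structural change is that the optimization in (\ref{eq:detanalIcor2}) runs over $c_3\le 0$ (with a $\max$ over $\nu\ge 0,A_0\le 1$) rather than $c_3\ge 0$ (with a $\min$). So the strategy is: first verify that the substitution $c_3\to -c_3$ correctly transports the lower-tail bound of Theorem \ref{thm:ldp2lower} into the form (\ref{eq:detanalIcor2})--(\ref{eq:detanalIcor3}); second, observe that plugging in the prescribed $(\nu,c_3,A_0)$ from Theorem \ref{thm:ldp3} into this identical function yields exactly the value computed in (\ref{eq:computingzeta4b}); third, conclude that this value equals $I^{(p)}_{ldp}(\alpha,\beta)$.

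Concretely, I would proceed as follows. First I would spell out the change of variables in Theorem \ref{thm:ldp2lower}: writing $I_{sph}^+$ in terms of $\widehat{\gamma^+}=-\widehat{\gamma}$, and rewriting $w_1,w_2,w_3$ from (\ref{eq:ldpthm2perrub2lower}) with $c_3\to -c_3$, one finds that the factor $1+c_3/2/\gamma$ becomes $1-c_3/2/\gamma = A_0^2$ under (\ref{eq:detanalIcor1}), so the $w_i$'s match (\ref{eq:detanalIcor3}) term by term, and the spherical term $I_{sph}^+$ with the sign flip collapses to the same $-\frac{(1-A_0^2)\alpha}{2}+\alpha\log A_0$ computed in (\ref{eq:computingzeta1}). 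This is the step requiring genuine care — one has to make sure the $-c_3\gamma$ penalty, the $-c_3^2/2$ term, and the $\frac{c_3^2}{2(1-A_0^2)}$ term that arises from eliminating $\gamma$ all carry the right signs so that the resulting $\zeta_{\alpha,\beta}$ is genuinely identical to (\ref{eq:detanalIeer3}), not merely similar. Second, since the function is identical, the stationarity equations (\ref{eq:detanalIeer3a}) and their solution (\ref{eq:emerb0b1h1}) are unchanged; the only thing to check is that the sense of the optimization (a $\max$ in $\nu,A_0$ and a $\min$ over $c_3\le 0$) is consistent with this stationary point being the relevant extremum, which follows from the concavity/convexity structure already implicit in the earlier argument (and anyway the value at the stationary point is what the theorem asserts). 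Third, I would invoke (\ref{eq:computingzeta4b}) verbatim to read off
\begin{eqnarray}
\zeta_{\alpha,\beta}(c_3,\nu,A_0) &=& (\alpha-\eta\beta)\log\left(\frac{\erfinv\left(\frac{1-\alpha}{1-\beta_1}\right)}{\erfinv\left(\frac{1-\alpha}{1-\beta_0}\right)}\right) +(1-\beta)\log\left(\frac{1-\beta}{1-\beta_1}\right) \nonumber \\
& & {}+\beta(1-\eta)\log\left(\frac{(\alpha-\beta)(1-\beta_0)}{(\alpha-\beta_0)(1-\beta_1)}\right) = I^{(p)}_{ldp}(\alpha,\beta), \nonumber
\end{eqnarray}
which is exactly (\ref{eq:ldpthm3Icorub1}).

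The main obstacle, as flagged above, is the bookkeeping in the first step: one must be scrupulous that the lower-tail quantities $I_{sph}^+$, $\widehat{\gamma^+}$, and the $w_i$ from (\ref{eq:ldpthm2perrub2lower}), together with the sign of the $\gamma$-elimination that produces $A_0$ via (\ref{eq:detanalIcor1}), really do reassemble into the \emph{same} $\zeta_{\alpha,\beta}$ as in the upper-tail case — this is the content of the sentence in the excerpt that ``$\zeta_{\alpha,\beta}(c_3,\nu,A_0)$ defined in (\ref{eq:detanalIcor3}) is exactly the same as the corresponding one in (\ref{eq:detanalIeer3})''. Once that identity is in hand, everything else is a direct transfer: the optimality of (\ref{eq:emerb0b1h1}), the fundamental LDP characterizations (\ref{eq:thmldp3l1PT})--(\ref{eq:thmldp3l1LDP}) defining $\beta_0,\beta_1$, and the closed-form evaluation (\ref{eq:computingzeta4b}) all carry over unchanged, and the only genuinely new remark is that here $\alpha<\alpha_w$ so that $\beta_1>\beta$ (rather than $\beta_1<\beta$), which is what places this computation on the lower-tail side of the phase transition. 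I would close by noting, as the theorem's statement does, that since the value coincides with $I^{(p)}_{ldp}(\alpha,\beta)$ obtained for the upper tail, the single expression $I^{(p)}_{ldp}(\alpha,\beta)$ characterizes the entire LDP rate function across both tails.
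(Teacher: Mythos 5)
Your proposal is correct and follows essentially the same route as the paper: the paper's own argument in Section \ref{sec:lowertail} consists precisely of observing that the substitution $c_3\rightarrow -c_3$ together with $A_0=\sqrt{1-c_3/2/\gamma}$ turns the lower-tail bound into the optimization (\ref{eq:detanalIcor2}) over the \emph{identical} function $\zeta_{\alpha,\beta}(c_3,\nu,A_0)$ of (\ref{eq:detanalIeer3}), so that the stationary choice (\ref{eq:emerb0b1h1}) and the evaluation (\ref{eq:computingzeta4b}) carry over verbatim. The only blemish is your parenthetical sign remark (in the lower tail $\alpha<\alpha_w$ one has $\beta_1<\beta$, $A_0>1$, $c_3<0$, not $\beta_1>\beta$), but this aside plays no role in the argument.
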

\begin{proof} Follows from the considerations leading to Theorem \ref{thm:ldp3}.
\end{proof}
Similarly to what happens in the standard $\ell_1$ LDP lower tail regime, here one also observes that for $\alpha<\alpha_w$, $\beta_1<\beta_0$ which means $A_0>1$ and finally $c_3<0$. Of course, in the upper tail regime (i.e. in Theorem \ref{thm:ldp3}) things are reversed and $c_3>0$.

\subsection{High-dimensional geometry}
\label{sec:hdg}

While the previous section relies on a purely probabilistic approach to characterize the partial $\ell_1$ LDP, in this section we take a different path and view the partial $\ell_1$ and its LDPs through the prism of the high-dimensional integral geometry. We start by making similar assumption as earlier and splitting the discussion into two parts/regimes, the upper tail and the lower tail. To be more concrete, we assume that we are given a pair $(\alpha,\beta)$ and that $\beta_1$ and $\beta_0$ are given by the partial $\ell_1$ fundamental LDP characterizations defined earlier. Also, we will first consider the upper tail regimes, i.e. $\alpha>\alpha_w$ (where $\alpha_w$ is such that $\psi^{(p)}_{\beta,\eta}(\alpha_w)=\xi^{(p)}_{\alpha_w}(\beta)=1$) (the lower tail analogues will then immediately follow once the upper ones are established). To start things off we rely on the following observations from \cite{StojnicEquiv10}
\begin{equation}\label{eq:hdg1}
  \Psi^{(p)}_{net}(\alpha,\beta)=I^{(p)}_{err}(\alpha,\beta)\triangleq\lim_{n\rightarrow\infty}\frac{\log{P_{err}}}{n}=\psicom+\psiint-\psiext,
\end{equation}
where
\begin{eqnarray}
\psicom & = & (\alpha-\beta) \log(2)-(\alpha-\beta)\log\left (\frac{\alpha-\beta}{1-\beta}\right )-(1-\alpha)\log\left (\frac{1-\alpha}{1-\beta}\right )\nonumber \\
\psiint & = & \min_{y\geq 0} ((\alpha-\eta\beta) y^2 +(\alpha-\beta)\log(\erfc(y)))- (\alpha-\beta) \log(2)\nonumber\\
\psiext & = & \min_{y\geq 0} ((\alpha-\eta\beta) y^2 -(1-\alpha)\log(\erf(y))). \label{eq:hdg2}
\end{eqnarray}
Now, we observe that the above expressions for $\psiint$ and $\psiext$ are very similar to the corresponding ones that we obtained when considering the standard $\ell_1$ LDP through the high-dimensional geometry approach in \cite{Stojnicl1RegPosasymldp}. Consequently, we can closely follow the analysis from \cite{Stojnicl1RegPosasymldp}. However, instead of redoing all the steps from \cite{Stojnicl1RegPosasymldp} we will try to speed things up and focus only on the key differences. Following \cite{Stojnicl1RegPosasymldp}, let $y_{int}$ and $y_{ext}$ be the solutions of the above optimizations (clearly, $y_{int}$ is the solution of the optimization associated with $\psiint$ and $y_{ext}$ is the solution of the optimization associated with $\psiext$). To determine $y_{int}$ we start by taking the following derivative
\begin{equation}
\frac{d((\alpha-\eta\beta) y^2 +(\alpha-\beta)\log(\erfc(y)))}{dy}  = 2(\alpha-\eta\beta) y-\frac{\alpha-\beta}{1-\erf(y)}\frac{2e^{-y^2}}{\sqrt{\pi}}. \label{eq:hdg3}
\end{equation}
Choosing
\begin{equation}\label{eq:hdg4}
y_{int}  =  \erfinv\left (\frac{1-\alpha}{1-\beta_0}\right ),
\end{equation}
the derivative in (\ref{eq:hdg3}) becomes
\begin{eqnarray}
\frac{d((\alpha-\eta\beta) y^2 +(\alpha-\beta)\log(\erfc(y)))}{dy} | _{y=y_{int}}  & = &   2(\alpha-\eta\beta) y_{int}-\frac{\alpha-\beta}{1-\erf(y_{int})}\frac{2e^{-y_{int}^2}}{\sqrt{\pi}}\nonumber \\
& = &2\alpha \erfinv\left (\frac{1-\alpha}{1-\beta_0}\right )\left (1-\frac{\alpha-\eta\beta_0}{\alpha-\eta\beta}\frac{\alpha-\beta}{\alpha-\beta_0}\xi^{(p)}_{\alpha,\eta}(\beta_0)\right )\nonumber \\
& = &0, \label{eq:hdg5}
\end{eqnarray}
where the last equality follows by the fundamental characterization of the partial $\ell_1$'s LDP. For the second derivative we have a similar consideration
\begin{eqnarray}
\frac{d^2((\alpha-\eta\beta) y^2 +(\alpha-\beta)\log(\erfc(y)))}{dy^2}  & = &   2(\alpha-\eta\beta) -\frac{\alpha-\beta}{\erfc(y)^2}\left (\frac{2e^{-y^2}}{\sqrt{\pi}}\right )^2+\frac{\alpha-\beta}{\erfc(y)}\left (\frac{4ye^{-y^2}}{\sqrt{\pi}}\right )\nonumber \\
& > &   2(\alpha-\beta) -\frac{\alpha-\beta}{\erfc(y)^2}\left (\frac{2e^{-y^2}}{\sqrt{\pi}}\right )^2+\frac{\alpha-\beta}{\erfc(y)}\left (\frac{4ye^{-y^2}}{\sqrt{\pi}}\right )\nonumber \\
& = &   \frac{2(\alpha-\beta) e^{-2y^2}}{\pi\erfc(y)^2}\left (\pi\erfc(y)^2e^{2y^2} -2+2 y \sqrt{\pi}\erfc(y)e^{y^2}\right )\nonumber \\
 & > &      \frac{2(\alpha-\beta) e^{-2y^2}}{\pi\erfc(y)^2}\left (\frac{4-2(y+\sqrt{y^2+2})^2+4y(y+\sqrt{y^2+2})}{(y+\sqrt{y^2+2})^2}\right )\nonumber \\
& = & 0, \label{eq:hdg11}
\end{eqnarray}
where the last inequality was proven in \cite{Stojnicl1RegPosasymldp}. From (\ref{eq:hdg11}) one concludes that $(\alpha y^2 +(\alpha-\beta)\log(\erfc(y)))$ is convex and that $y_{int}$ is its a global optimum (minimum). Combining (\ref{eq:hdg2}) and (\ref{eq:hdg4}) further and relying on the partial $\ell_1$'s fundamental LDP then finally gives
\begin{eqnarray}\label{eq:hdg12}
\psiint & = &  (\alpha-\eta\beta) y_{int}^2 +(\alpha-\beta)\log(\erfc(y_{int})- (\alpha-\beta) \log(2) \nonumber \\
& = & (\alpha-\eta\beta) \log\left (e^{\left (\erfinv\left (\frac{1-\alpha}{1-\beta_0}\right )
\right )^2}\right ) +(\alpha-\beta)\log\left (\frac{\alpha-\beta_0}{1-\beta_0}\right )- (\alpha-\beta) \log(2)\nonumber \\
& = & (\alpha-\eta\beta) \log\left (\frac{\alpha-\beta}{\alpha-\beta_0}\frac{1-\beta_0}{(\alpha-\eta\beta)\sqrt{2}\erfinv\left (\frac{1-\alpha}{1-\beta_0}\right )}\right ) +(\alpha-\beta)\log\left (\frac{\alpha-\beta_0}{1-\beta_0}\right )\nonumber \\
&&- (\alpha-\beta) \log(2)+(\alpha-\eta\beta)\log\lp\sqrt{\frac{2}{\pi}}\rp\nonumber \\
& = & -(\alpha-\eta\beta) \log\left (\sqrt{2}\erfinv\left (\frac{1-\alpha}{1-\beta_0}\right )\right )+(\alpha-\eta\beta)\log\left (\frac{\alpha-\beta}{\alpha-\eta\beta}\right )
-\beta(1-\eta)\log\left (\frac{\alpha-\beta_0}{1-\beta_0}\right )\nonumber \\
&&- (\alpha-\beta) \log(2)+(\alpha-\eta\beta)\log\lp\sqrt{\frac{2}{\pi}}\rp.\nonumber \\
\end{eqnarray}
For $\psiext$ we have a similar set of considerations. Namely, to determine $y_{ext}$
we start by taking the following derivative
\begin{equation}
\frac{d((\alpha-\eta\beta) y^2 -(1-\alpha)\log(\erf(y)))}{dy} = 2(\alpha-\eta\beta) y-\frac{1-\alpha}{\erf(y)}\frac{2e^{-y^2}}{\sqrt{\pi}}. \label{eq:hdg3e}
\end{equation}
Choosing
\begin{equation}\label{eq:hdg4e}
y_{ext}  =  \erfinv\left (\frac{1-\alpha}{1-\beta_1}\right ),
\end{equation}
the derivative in (\ref{eq:hdg3e}) becomes
\begin{eqnarray}
\frac{d((\alpha-\eta\beta) y^2 -(1-\alpha)\log(\erf(y)))}{dy} | _{y=y_{ext}}  & = &   2(\alpha-\eta\beta) y_{ext}-\frac{1-\alpha}{\erf(y_{ext})}\frac{2e^{-y_{ext}^2}}{\sqrt{\pi}}\nonumber \\
& = &2(\alpha-\eta\beta)\erfinv\left (\frac{1-\alpha}{1-\beta_1}\right )\left (1-\frac{\alpha-\eta\beta_1}{\alpha-\eta\beta}\xi^{(p)}_{\alpha,\eta}(\beta_1)\right )\nonumber \\
& = &0, \label{eq:hdg5e}
\end{eqnarray}
where the last equality follows by the fundamental characterization of the partial $\ell_1$'s LDP. For the second derivative we quickly have
\begin{eqnarray}
\frac{d^2((\alpha-\eta\beta)y^2 +(\alpha-\beta)\log(\erfc(y)))}{dy^2}   =    2(\alpha-\eta\beta) +\frac{1-\alpha}{\erf(y)^2}\left (\frac{2e^{-y^2}}{\sqrt{\pi}}\right )^2+\frac{1-\alpha}{\erfc(y)}\left (\frac{4ye^{-y^2}}{\sqrt{\pi}}\right )
 > 0. \label{eq:hdg11e}
\end{eqnarray}
Moreover,
\begin{eqnarray}\label{eq:hdg13}
\psiext  & = &   \min_{y\geq 0} ((\alpha-\eta\beta) y^2 -(1-\alpha)\log(\erf(y)))\nonumber \\
& = &(\alpha-\eta\beta)\log\left (e^{\left (\erfinv\left (\frac{1-\alpha}{1-\beta_1}\right )\right )^2}\right )-(1-\alpha)\log\left (\frac{1-\alpha}{1-\beta_1}\right )\nonumber \\
& = &(\alpha-\eta\beta) \log\left (\frac{1-\beta_1}{(\alpha-\eta\beta)\sqrt{2}\erfinv\left (\frac{1-\alpha}{1-\beta_1}\right )}\right )-(1-\alpha)\log\left (\frac{1-\alpha}{1-\beta_1}\right )+(\alpha-\eta\beta)\log\lp\sqrt{\frac{2}{\pi}}\rp\nonumber \\
& = &-(\alpha-\eta\beta) \log\left (\sqrt{2}\erfinv\left (\frac{1-\alpha}{1-\beta_1}\right )\right )+(\alpha-\eta\beta) \log\left (\frac{1-\beta_1}{(\alpha-\eta\beta)}\right )\nonumber \\
&&-(1-\alpha)\log\left (\frac{1-\alpha}{1-\beta_1}\right )+(\alpha-\eta\beta)\log\lp\sqrt{\frac{2}{\pi}}\rp\nonumber \\
& = &-(\alpha-\eta\beta) \log\left (\sqrt{2}\erfinv\left (\frac{1-\alpha}{1-\beta_1}\right )\right )-(\alpha-\eta\beta) \log((\alpha-\eta\beta))\nonumber \\
&&-(1-\alpha)\log (1-\alpha)+(1-\eta\beta)\log(1-\beta_1)
+(\alpha-\eta\beta)\log\lp\sqrt{\frac{2}{\pi}}\rp.\nonumber \\
\end{eqnarray}
Finally one can combine (\ref{eq:hdg1}), (\ref{eq:hdg2}), (\ref{eq:hdg12}), and (\ref{eq:hdg13}) to obtain
\begin{eqnarray}
  \Psi^{(p)}_{net}(\alpha,\beta)&=&I^{(p)}_{err}(\alpha,\beta)=\psicom+\psiint-\psiext\nonumber \\
& = & (\alpha-\beta) \log(2)-(\alpha-\beta)\log\left (\frac{\alpha-\beta}{1-\beta}\right )-(1-\alpha)\log\left (\frac{1-\alpha}{1-\beta}\right )\nonumber \\
&&-(\alpha-\eta\beta) \log\left (\sqrt{2}\erfinv\left (\frac{1-\alpha}{1-\beta_0}\right )\right )+(\alpha-\eta\beta)\log\left (\frac{\alpha-\beta}{\alpha-\eta\beta}\right )
-\beta(1-\eta)\log\left (\frac{\alpha-\beta_0}{1-\beta_0}\right )\nonumber \\
&& - (\alpha-\beta) \log(2)+(\alpha-\eta\beta) \log\left (\sqrt{2}\erfinv\left (\frac{1-\alpha}{1-\beta_1}\right )\right )+(\alpha-\eta\beta) \log(\alpha-\eta\beta)\nonumber \\
&&+(1-\alpha)\log (1-\alpha)-(1-\eta\beta)\log(1-\beta_1)\nonumber \\
& = & (\alpha-\eta\beta) \log\left (\frac{\erfinv\left (\frac{1-\alpha}{1-\beta_1}\right )}{\erfinv\left (\frac{1-\alpha}{1-\beta_0}\right )}\right )+\beta(1-\eta)\log(\alpha-\beta)+(1-\beta)\log(1-\beta)\nonumber \\
&&-\beta(1-\eta)\log\left (\frac{\alpha-\beta_0}{1-\beta_0}\right )-(1-\beta\eta)\log(1-\beta_1)\nonumber \\
& = & (\alpha-\eta\beta) \log\left (\frac{\erfinv\left (\frac{1-\alpha}{1-\beta_1}\right )}{\erfinv\left (\frac{1-\alpha}{1-\beta_0}\right )}\right )+\beta(1-\eta)\log(\alpha-\beta)+(1-\beta)\log(1-\beta)\nonumber \\
&&-\beta(1-\eta)\log\left (\frac{\alpha-\beta_0}{1-\beta_0}\right )-(1-\beta)\log(1-\beta_1)-(\beta-\beta\eta)\log(1-\beta_1)\nonumber \\
& = & (\alpha-\eta\beta) \log\left (\frac{\erfinv\left (\frac{1-\alpha}{1-\beta_1}\right )}{\erfinv\left (\frac{1-\alpha}{1-\beta_0}\right )}\right )+
(1-\beta)\log\left (\frac{1-\beta}{1-\beta_1}\right )+\beta(1-\eta)\log\left (\frac{\alpha-\beta}{\alpha-\beta_0}\frac{1-\beta_0}{1-\beta_1}\right )\nonumber \\
&= &I^{(p)}_{ldp}(\alpha,\beta).\nonumber \\
\label{eq:hdg14}
\end{eqnarray}
A combination of (\ref{eq:ldpthm3Ierrub1}), (\ref{eq:hdg1}), and (\ref{eq:hdg14}) then gives
\begin{equation}\label{eq:hdg15}
  I^{(p)}_{err}(\alpha,\beta)=I^{(p)}_{ldp}(\alpha,\beta),
\end{equation}
and ensures that the choice for $\nu$, $A_0$, $c_3$, and $\gamma$ made in (\ref{eq:ldpthm3perrub2}) is indeed optimal. Moreover, in the lower tail regime ($\alpha<\alpha_w$, where $\alpha_w$ is such that $\psi^{(p)}_{\beta,\eta}(\alpha_w)=\xi^{(p)}_{\alpha_w}(\beta)=1$) considerations from \cite{StojnicEquiv10} ensure that one also has
\begin{equation}\label{eq:hdg1a}
  \Psi^{(p)}_{net}(\alpha,\beta)=I^{(p)}_{cor}(\alpha,\beta)\triangleq\lim_{n\rightarrow\infty}\frac{\log{P_{cor}}}{n}=\psicom+\psiint-\psiext,
\end{equation}
where $\psicom$, $\psiint$, and $\psiext$ are as in (\ref{eq:hdg2}). All of the above considerations are then enough to fully characterize the partial $\ell_1$'s LDP. We summarize the characterization in the following theorem.
\begin{theorem}[Partial $\ell_1$'s LDP]
Assume the setup of Theorem \ref{thm:thmweakthr} and assume that a pair $(\alpha,\beta)$ is given. Let $P_{err}$ be the probability that the solutions of (\ref{eq:l0}) and (\ref{eq:l1}) coincide and let $P_{cor}$ be the probability that the solutions of (\ref{eq:l0}) and (\ref{eq:l1}) do \emph{not} coincide. Let $\alpha_w$ and $\beta_w$ satisfy the \textbf{\emph{partial} $\ell_1$'s fundamental PT} characterizations in the following way
\begin{equation}
\psi^{(p)}_{\beta,\eta}(\alpha_w)\triangleq
(1-\beta)\frac{\sqrt{\frac{2}{\pi}}e^{-\lp\erfinv\lp\frac{1-\alpha_w}{1-\beta}\rp\rp^2}}{(\alpha_w-\eta\beta)\sqrt{2}\erfinv \lp\frac{1-\alpha_w}{1-\beta}\rp}=1\quad \mbox{and} \quad
\xi^{(p)}_{\alpha,\eta}(\beta_w)\triangleq
(1-\beta_w)\frac{\sqrt{\frac{2}{\pi}}e^{-\lp\erfinv\lp\frac{1-\alpha}{1-\beta_w}\rp\rp^2}}{(\alpha-\eta\beta_w)\sqrt{2}\erfinv \lp\frac{1-\alpha}{1-\beta_w}\rp}=1.\label{eq:thmfinalldpl11}
\end{equation}
Further let $\beta_1$ and $\beta_0$ satisfy the following \textbf{partial $\ell_1$'s fundamental LDP} characterizations
\begin{equation}
  \frac{\alpha-\eta\beta_1}{\alpha-\eta\beta}
\xi^{(p)}_{\alpha,\eta}(\beta_1)=\frac{(1-\beta_1)\sqrt{\frac{2}{\pi}}e^{-\lp\erfinv\lp\frac{1-\alpha}{1-\beta_1}\rp\rp^2}}{(\alpha-\eta\beta)\sqrt{2}\erfinv\lp\frac{1-\alpha}{1-\beta_1}\rp
  } =1,
\label{eq:thmfinalldpl12a}
\end{equation}

\noindent and

\begin{equation}
\frac{\alpha-\eta\beta_0}{\alpha-\eta\beta}
\frac{\alpha-\beta}{\alpha-\beta_0}\xi^{(p)}_{\alpha,\eta}(\beta_0)=\frac{(\alpha-\beta)(1-\beta_0)\sqrt{\frac{2}{\pi}}e^{-\lp\erfinv\lp\frac{1-\alpha}{1-\beta_0}\rp\rp^2}}
{(\alpha-\beta_0)(\alpha-\eta\beta)\sqrt{2}\erfinv\lp\frac{1-\alpha}{1-\beta_0}\rp}=1.
\label{eq:thmfinalldpl12b}
\end{equation}
Finally, let $I^{(p)}_{ldp}(\alpha,\beta)$ be defined through the following \textbf{partial $\ell_1$'s fundamental LDP rate function} characterization
\begin{equation}
I^{(p)}_{ldp}(\alpha,\beta)\triangleq
(\alpha-\eta\beta)\log\left (\frac{\erfinv\left (\frac{1-\alpha}{1-\beta_1}\right )}{\erfinv\left (\frac{1-\alpha}{1-\beta_0}\right )}\right )+(1-\beta)\log\left (\frac{1-\beta}{1-\beta_1}\right ) +\beta(1-\eta)\log\left (\frac{(\alpha-\beta)(1-\beta_0)}{(\alpha-\beta_0)(1-\beta_1)}\right ).
\label{eq:thmfinalldpl13}
\end{equation}
Then if $\alpha>\alpha_w$
\begin{equation}
I^{(p)}_{err}(\alpha,\beta)\triangleq\lim_{n\rightarrow\infty}\frac{\log{P_{err}}}{n}=I^{(p)}_{ldp}(\alpha,\beta).\label{eq:thmfinalldpl14}
\end{equation}
Moreover, if $\alpha<\alpha_w$
\begin{equation}
I^{(p)}_{cor}(\alpha,\beta)\triangleq\lim_{n\rightarrow\infty}\frac{\log{P_{cor}}}{n}=I^{(p)}_{ldp}(\alpha,\beta).\label{eq:thmfinalldpl15}
\end{equation}\label{thm:finalldpl1}
\end{theorem}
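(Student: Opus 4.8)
The plan is to assemble the statement from the two complementary developments already in place: the purely probabilistic chain (Theorems \ref{thm:ldp1}--\ref{thm:ldp3} in the upper tail and Theorems \ref{thm:ldp2lower}--\ref{thm:ldp3lower} in the lower tail), which yields the one-sided bounds $I^{(p)}_{err}\le I^{(p)}_{ldp}$ and $I^{(p)}_{cor}\le I^{(p)}_{ldp}$, and the high-dimensional integral-geometry identity of Section \ref{sec:hdg}, which supplies the matching opposite inequality and thereby promotes both bounds to equalities. In other words, the proof is an assembly step rather than a new estimate.

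First, for the upper tail $\alpha>\alpha_w$ I would invoke Theorem \ref{thm:ldp3}: the optimization defining $I^{(p,ub)}_{err,u}(\alpha,\beta)$, reduced via the substitution (\ref{eq:detanalIeer1}) to the stationarity system (\ref{eq:detanalIeer3a}) whose solution is (\ref{eq:emerb0b1h1}), evaluates through (\ref{eq:computingzeta4b}) to $\zeta_{\alpha,\beta}(c_3,\nu,A_0)=I^{(p)}_{ldp}(\alpha,\beta)$, so that $I^{(p)}_{err}(\alpha,\beta)\le I^{(p)}_{ldp}(\alpha,\beta)$. Then I would call on the equivalence of \cite{StojnicEquiv10} recalled in (\ref{eq:hdg1}), $I^{(p)}_{err}(\alpha,\beta)=\psicom+\psiint-\psiext$, together with the closed forms for $\psiint$ and $\psiext$ from (\ref{eq:hdg12}) and (\ref{eq:hdg13}); here one checks that $y_{int}=\erfinv\lp\frac{1-\alpha}{1-\beta_0}\rp$ and $y_{ext}=\erfinv\lp\frac{1-\alpha}{1-\beta_1}\rp$ are the global minimizers, which follows from the convexity established in (\ref{eq:hdg11}) and (\ref{eq:hdg11e}) using the $\erfc$ inequalities (\ref{eq:propxi2}), and that both stationarity conditions collapse, under the fundamental LDP characterizations (\ref{eq:thmfinalldpl12a})--(\ref{eq:thmfinalldpl12b}), to the identities (\ref{eq:hdg5}) and (\ref{eq:hdg5e}). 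Substituting into (\ref{eq:hdg1}) and merging the logarithms as in (\ref{eq:hdg14}) gives $\Psi^{(p)}_{net}(\alpha,\beta)=I^{(p)}_{ldp}(\alpha,\beta)$, and combining with the probabilistic upper bound yields the equality (\ref{eq:thmfinalldpl14}); this simultaneously certifies that the choice (\ref{eq:ldpthm3perrub2}) of $\nu,A_0,c_3,\gamma$ is optimal, as recorded in (\ref{eq:hdg15}).

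Second, for the lower tail $\alpha<\alpha_w$ I would run the identical two-sided argument with $P_{cor}$ in place of $P_{err}$: Theorem \ref{thm:ldp3lower}, obtained from the sign change $c_3\rightarrow -c_3$ in (\ref{eq:detanalIcor1})--(\ref{eq:detanalIcor3}) and the observation that the objective $\zeta_{\alpha,\beta}$ there coincides with (\ref{eq:detanalIeer3}), gives $I^{(p)}_{cor}(\alpha,\beta)\le I^{(p)}_{ldp}(\alpha,\beta)$ (now with $\beta_1<\beta_0$, hence $A_0>1$ and $c_3<0$), while the geometry-side identity (\ref{eq:hdg1a}), again supplied by \cite{StojnicEquiv10} in this regime, together with the unchanged computation (\ref{eq:hdg14}) gives $\Psi^{(p)}_{net}(\alpha,\beta)=I^{(p)}_{cor}(\alpha,\beta)=I^{(p)}_{ldp}(\alpha,\beta)$, hence (\ref{eq:thmfinalldpl15}). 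Throughout, existence and uniqueness of the $\beta_0,\beta_1$ solving (\ref{eq:thmfinalldpl12a})--(\ref{eq:thmfinalldpl12b}) is taken from the monotonicity analysis analogous to Section \ref{sec:propxi}, applied now to the weighted functions $\frac{\alpha-\eta\beta_1}{\alpha-\eta\beta}\xi^{(p)}_{\alpha,\eta}(\beta_1)$ and $\frac{\alpha-\eta\beta_0}{\alpha-\eta\beta}\frac{\alpha-\beta}{\alpha-\beta_0}\xi^{(p)}_{\alpha,\eta}(\beta_0)$.

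I expect the only real obstacle to be the bookkeeping that reconciles the two descriptions: verifying that the probabilistic optimizer (\ref{eq:emerb0b1h1}) is the unique stationary point and a global optimum of (\ref{eq:detanalIeer2}) — a point the main text deliberately defers and then lets the geometry computation certify — and executing the logarithm collapse of (\ref{eq:hdg14}) carefully enough that the geometric answer lands in exactly the normalized form $I^{(p)}_{ldp}$. Everything else reduces to citing statements already established above.
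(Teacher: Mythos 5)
Your proposal is correct and follows essentially the same route as the paper: the probabilistic chain (Theorems \ref{thm:ldp2}--\ref{thm:ldp3} and \ref{thm:ldp2lower}--\ref{thm:ldp3lower}) supplies the bound $I^{(p)}_{err},I^{(p)}_{cor}\leq I^{(p)}_{ldp}$, and the high-dimensional geometry identities (\ref{eq:hdg1}), (\ref{eq:hdg1a}) together with the computation (\ref{eq:hdg12})--(\ref{eq:hdg14}) pin the exact value to $I^{(p)}_{ldp}$, which is precisely the "above discussion" the paper's proof invokes. The assembly you describe, including the role of (\ref{eq:hdg15}) in certifying the optimality of the choice (\ref{eq:ldpthm3perrub2}) and the uniqueness of $\beta_0,\beta_1$ from the monotonicity analysis, matches the paper's argument.
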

\begin{proof} Follows from the above discussion.
\end{proof}
In the following section we will establish a few properties of functions
$\frac{\alpha-\eta\beta_1}{\alpha-\eta\beta}
\xi^{(p)}_{\alpha,\eta}(\beta_1)$ and $  \frac{\alpha-\eta\beta_0}{\alpha-\eta\beta}
\frac{\alpha-\beta}{\alpha-\beta_0}\xi^{(p)}_{\alpha,\eta}(\beta_0)$ that ensure that the above theorem unambiguously characterizes the partial $\ell_1$ LDP. After that we will be in position to showcase the results that one can obatin based on the above theorem.

\subsubsection{Properties of $\frac{\alpha-\eta\beta_1}{\alpha-\eta\beta}
\xi^{(p)}_{\alpha,\eta}(\beta_1)$}
\label{sec:propxibeta1}

The properties of functions $\frac{\alpha-\eta\beta_1}{\alpha-\eta\beta}
\xi^{(p)}_{\alpha,\eta}(\beta_1)$ and $  \frac{\alpha-\eta\beta_0}{\alpha-\eta\beta}
\frac{\alpha-\beta}{\alpha-\beta_0}\xi^{(p)}_{\alpha,\eta}(\beta_0)$ that we will show below will roughly have the same flavor as the properties of functions $\xi^{(p)}_{\alpha,\eta}(\beta)$ and $\psi^{(p)}_{\beta,\eta}(\alpha_w)$ from Theorem \ref{thm:thmweakthr} that we discussed in Section \ref{sec:propxi}. Ultimately they will ensure that the above partial $\ell_1$ LDP characterizations are unambiguous.

Below we start by highlighting that for any fixed $\eta\in(0,1)$ and any fixed $(\alpha,\beta)\in (0,1)\times(0,\alpha)$ there is a unique $\beta_1$ such that $\frac{\alpha-\eta\beta_1}{\alpha-\eta\beta}
\xi^{(p)}_{\alpha,\eta}(\beta_1)=1$ (this is in flavor very similar to what we showed for functions $\xi^{(p)}_{\alpha,\eta}(\beta)$ and $\psi^{(p)}_{\beta,\eta}(\alpha_w)$ in Section \ref{sec:propxi}) and essentially ensures that (\ref{eq:thmfinalldpl12a}) is an unambiguous LDP characterization. To confirm that this is indeed true we proceed in a fashion similar to the one showcased in Section \ref{sec:propxi}.
Namely, we will first show that for any fixed $\eta\in (0,1)$ and any fixed $\alpha\in (0,1)$, $\frac{\alpha-\eta\beta_1}{\alpha-\eta\beta}
\xi^{(p)}_{\alpha,\eta}(\beta_1)=1$ is a decreasing function of $\beta_1$ on interval $[0,\alpha)$. Computing the derivative with respect to $\beta_1$ gives
\begin{eqnarray}\label{eq:propxifinldp1}
  \frac{d(\frac{\alpha-\eta\beta_1}{\alpha-\eta\beta}
\xi^{(p)}_{\alpha,\eta}(\beta_1)}{d\beta_1} & = & \frac{d\lp\frac{(1-\beta_1)\sqrt{\frac{2}{\pi}}e^{-\lp\erfinv\lp\frac{1-\alpha}{1-\beta_1}\rp\rp^2}}{(\alpha-\eta\beta)\sqrt{2}\erfinv \lp\frac{1-\alpha}{1-\beta_1}\rp}-1\rp}{d\beta}\nonumber\\
  & = & \sqrt{\frac{2}{\pi}}\frac{-\frac{\sqrt{\pi} (1-\alpha)}{(1-\beta_1) \erfinv((1-\alpha)/(1-\beta_1))^2}-\frac{2 e^{-\lp \erfinv\lp\frac{1-\alpha}{1-\beta_1}\rp\rp^2}}{\erfinv((1-\alpha)/(1-\beta_1))}-\frac{2\sqrt{\pi} (1-\alpha)}{1-\beta_1}}{2 \sqrt{2} (\alpha-\eta\beta)}\nonumber \\
  &<& 0.
\end{eqnarray}
In Section \ref{sec:propxi} (and ultimately in \cite{Stojnicl1RegPosasymldp}) it was argued that for any fixed $\alpha\in (0,1)$, $\lim_{\beta\rightarrow \alpha} (\alpha-\eta\beta)\xi^{(p)}_{\alpha,\eta}(\beta_1)-1=-1$ which then implies that for any fixed $\eta\in (0,1)$ and any fixed $\alpha\in (0,1)$  one also has $\lim_{\beta\rightarrow \alpha} \frac{\alpha-\eta\beta_1}{\alpha-\eta\beta}\xi^{(p)}_{\alpha,\eta}(\beta_1)-1=-1$. Moreover, in \cite{Stojnicl1RegPosasymldp} we also showed that for any fixed $\alpha\in (0,1)$, $\xi^{(p)}_{\alpha,\eta}(0)-1>0$. Together with (\ref{eq:propxifinldp1}) this is then enough to conclude that for any fixed $\eta\in (0,1)$ and any fixed $\alpha\in (0,1)$ there is a unique $\beta_1$ such that $\frac{\alpha-\eta\beta_1}{\alpha-\eta\beta}\xi^{(p)}_{\alpha,\eta}(\beta_1)=1$, which as mentioned above essentially means that (\ref{eq:thmfinalldpl12a}) is an unambiguous LDP characterization. For the completeness, in Figure \ref{fig:propxibeta1} we present a few numerical results related to the behavior of $\xi^{(p)}_{\alpha,\eta}(\beta)$ that indeed confirm the above calculations.
\begin{figure}[htb]
\begin{minipage}[b]{.5\linewidth}
\centering
\centerline{\epsfig{figure=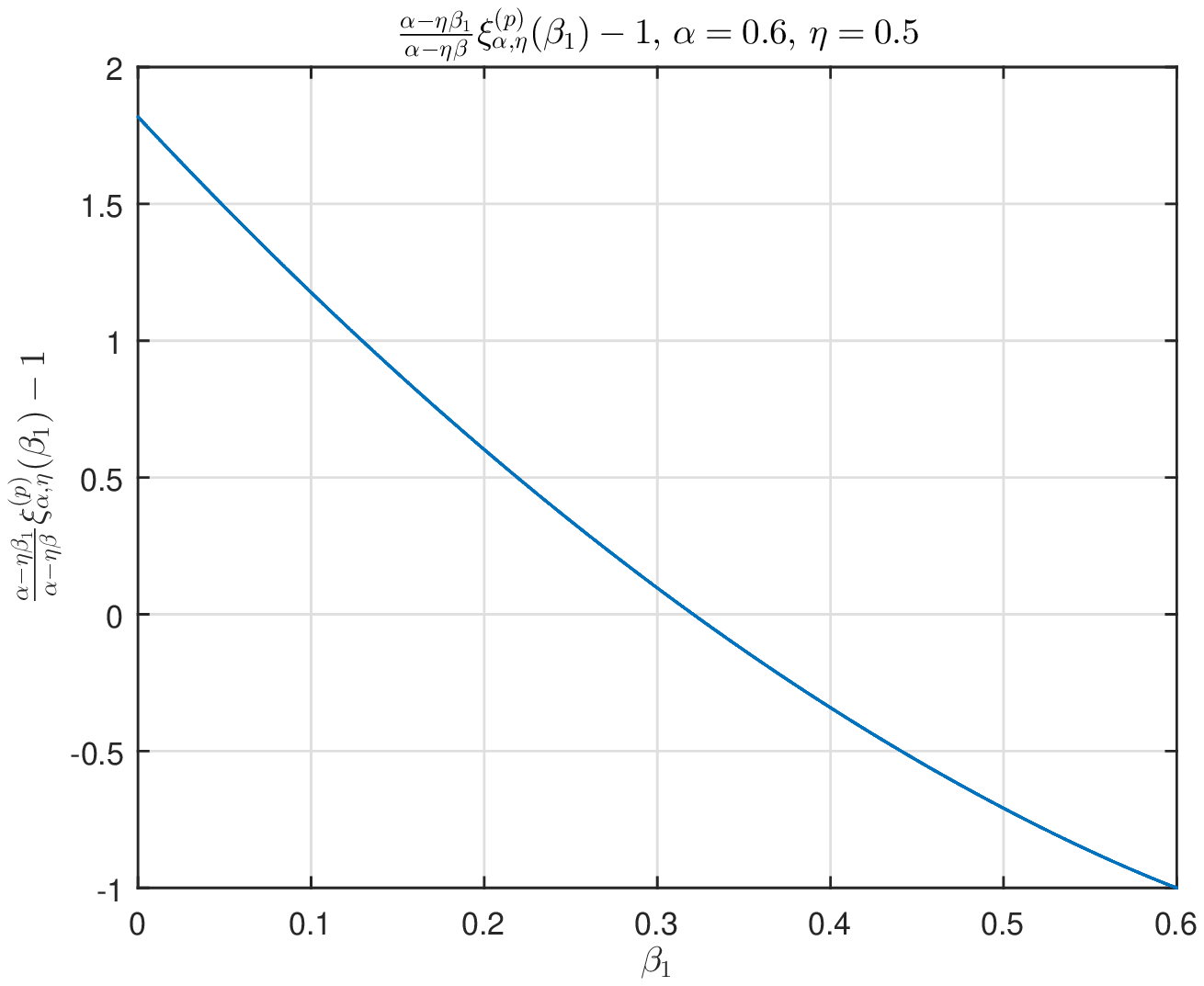,width=9cm,height=7cm}}
\end{minipage}
\begin{minipage}[b]{.5\linewidth}
\centering
\centerline{\epsfig{figure=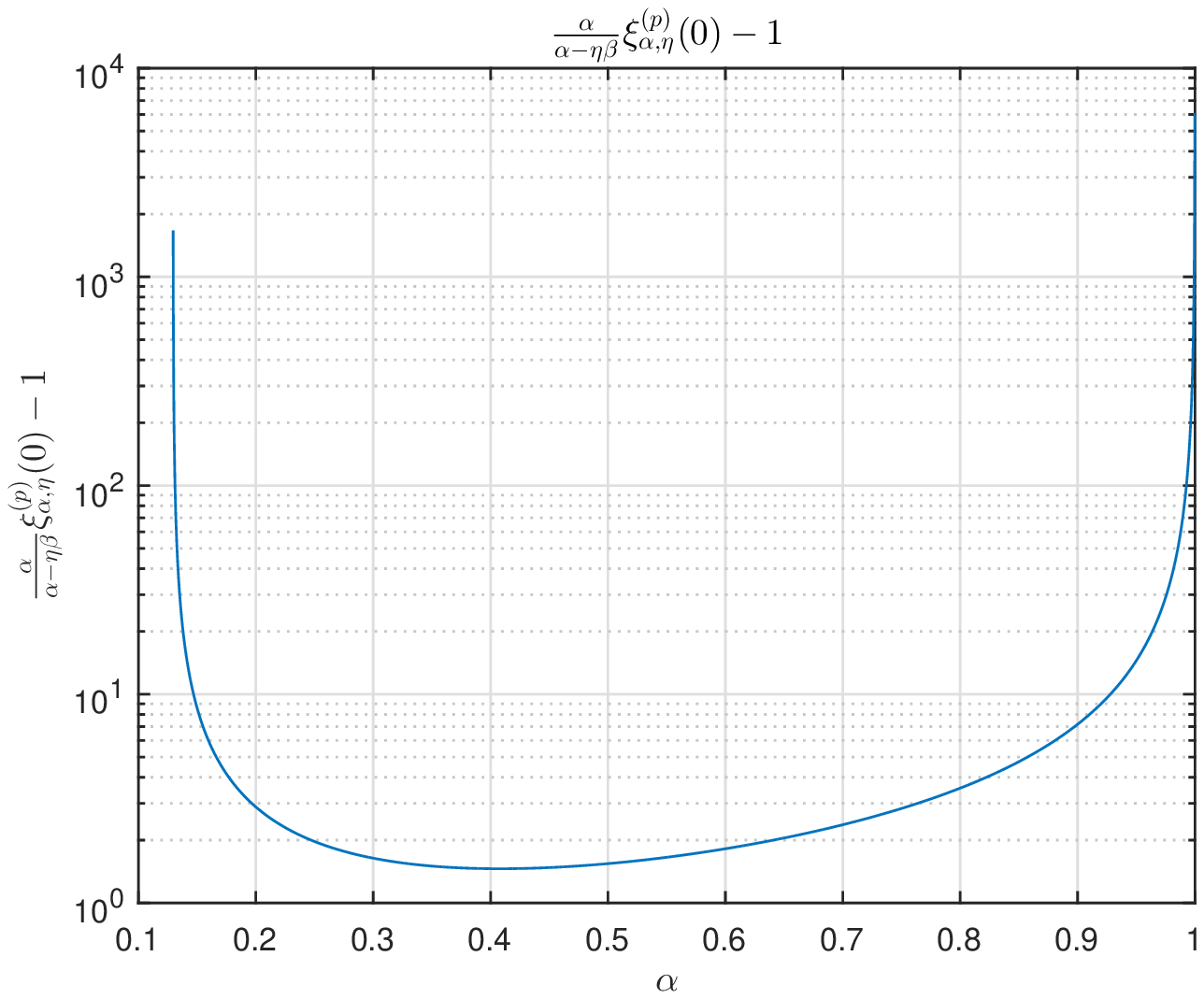,width=9cm,height=7cm}}
\end{minipage}
\caption{Properties of $\frac{\alpha-\eta\beta_1}{\alpha-\eta\beta}\xi^{(p)}_{\alpha,\eta}(\beta_1)$: $\lp \frac{\alpha-\eta\beta_1}{\alpha-\eta\beta}\xi^{(p)}_{\alpha,\eta}(\beta_1)-1\rp$ as a function of $\beta_1$ ($\alpha=0.6$, $\eta=0.5$) -- left; $\lp \frac{\alpha}{\alpha-\eta\beta}\xi^{(p)}_{\alpha,\eta}(0)-1\rp$ as a function of $\alpha$ ($\eta=0.5$)-- right}
\label{fig:propxibeta1}
\end{figure}

\subsubsection{Properties of $  \frac{\alpha-\eta\beta_0}{\alpha-\eta\beta}
\frac{\alpha-\beta}{\alpha-\beta_0}\xi^{(p)}_{\alpha,\eta}(\beta_0)$}
\label{sec:propxibeta0}

In this section we discuss $  \frac{\alpha-\eta\beta_0}{\alpha-\eta\beta}
\frac{\alpha-\beta}{\alpha-\beta_0}\xi^{(p)}_{\alpha,\eta}(\beta_0)$. The reasoning will be slightly different from what we presented above. We start by setting
\begin{equation}\label{eq:propxibeta01}
  q_0=\erfinv\left (\frac{1-\alpha}{1-\beta_0}\right ),
\end{equation}
and then have
\begin{equation}\label{eq:propxibeta02}
  \frac{\alpha-\eta\beta_0}{\alpha-\eta\beta}\frac{\alpha-\beta}{\alpha-\beta_0}
\xi^{(p)}_{\alpha,\eta}(\beta_0)=1 \Leftrightarrow \frac{\alpha-\beta}{\alpha-\eta\beta}\frac{1}{\erfc(q_0)}\frac{\sqrt{\frac{1}{\pi}}e^{-q_0^2}}{q_0}=1
  \Leftrightarrow \frac{\sqrt{\frac{1}{\pi}}e^{-q_0^2}}{q_0}-\erfc(q_0)c_{\alpha,\beta}=0, c_{\alpha,\beta}> 1.
\end{equation}
One can now continue using the same set of arguments as in \cite{Stojnicl1RegPosasymldp} when the properties of function $\frac{\alpha-\beta}{\alpha-\beta_0}\xi_{\alpha}(\beta_0)$ considered there were discussed. Instead of rewriting these arguments we just state the final conclusion that $\left ( \frac{\sqrt{\frac{1}{\pi}}e^{-q_0^2}}{q_0}-\erfc(q_0)c_{\alpha,\beta}\right )$ is decreasing for $q_0\leq \frac{1}{\sqrt{2c_{\alpha,\beta}(c_{\alpha,\beta}-1)}}$ and that $\left ( \frac{\sqrt{\frac{1}{\pi}}e^{-q_0^2}}{q_0}-\erfc(q_0)c_{\alpha,\beta}=0\right )$ has a unique solution (moreover, it is in the interval $(0, \frac{1}{\sqrt{2c_{\alpha,\beta}(c_{\alpha,\beta}-1)}})$). This then implies that $\frac{\alpha-\eta\beta_0}{\alpha-\eta\beta}\frac{\alpha-\beta}{\alpha-\beta_0}\xi^{(p)}_{\alpha,\eta}(\beta_0)=1$ also has a unique solution, i.e. that for any fixed $(\alpha,\beta)\in (0,1)\times (0,\alpha)$ there is a unique $\beta_0$ such that $\frac{\alpha-\eta\beta_0}{\alpha-\eta\beta}\frac{\alpha-\beta}{\alpha-\beta_0}\xi^{(p)}_{\alpha,\eta}(\beta_0)=1$, which as mentioned above essentially means that (\ref{eq:thmfinalldpl12b}) is also an unambiguous LDP characterization. For the completeness, in Figure \ref{fig:propxibeta0} we present a few numerical results related to the behavior of $\left ( \frac{\sqrt{\frac{1}{\pi}}e^{-q_0^2}}{q_0}-\erfc(q_0)c_{\alpha,\beta}\right )$ (and ultimately of $\left (\frac{\alpha-\eta\beta_0}{\alpha-\eta\beta}\frac{\alpha-\beta}{\alpha-\beta_0}\xi^{(p)}_{\alpha,\eta}(\beta_0)-1\right )$) that indeed confirm the above calculations.

\begin{figure}[htb]
\centering
\centerline{\epsfig{figure=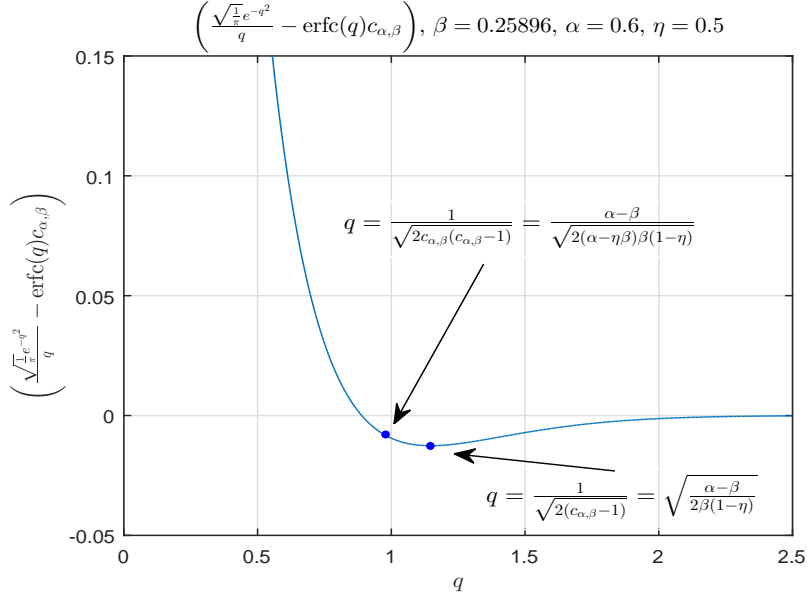,width=11.5cm,height=8cm}}
\caption{Uniqueness of the solution of $\frac{\alpha-\eta\beta_0}{\alpha-\eta\beta}\frac{\alpha-\beta}{\alpha-\beta_0}\xi^{(p)}_{\alpha,\eta}(\beta_0)=1$ is implied by the properties of $\left ( \frac{\sqrt{\frac{1}{\pi}}e^{-q_0^2}}{q_0}-\erfc(q_0)c_{\alpha,\beta}\right )$}
\label{fig:propxibeta0}
\end{figure}


\subsection{Theoretical and numerical LDP results -- partial $\ell_1$}
\label{sec:thnumresuts}

In this section we briefly discuss the results that can be obtained based on what is proven in Theorem \ref{thm:finalldpl1}. In Figure \ref{fig:l1regldpIerrub}, for two different values of $\beta$ ($\beta=0.25896$ and $\beta=\frac{1}{3}$; $\beta=0.25896$ is chosen as the breaking point that one obtains from the partial $\ell_1$ PT for $\alpha=0.5$) we show the obtained theoretical LDP rate function curves (these are of course calculated  based on Theorem \ref{thm:finalldpl1}). To facilitate reading, in Table \ref{tab:Ildptab1} we also show the numerical values for all the quantities of interest in Theorems \ref{thm:ldp3} and \ref{thm:finalldpl1}. On the other hand, in Figure \ref{fig:weakl1LDPthrsim} and Table \ref{tab:Ildptab2} we show how the simulated values compare to the theoretical ones and observe that they are quite close to each other even for small systems dimensions (of order $100$). Of course, we do recall that the LDP computations assume an infinite dimensional asymptotic regime. Having that in mind Figure \ref{fig:weakl1LDPthrsim} and Table \ref{tab:Ildptab2} then indicate that fairly small systems (of size of a few hundreds) already exhibit the LDP properties typical for infinite dimensional systems.


\begin{figure}[htb]
\begin{minipage}[b]{.5\linewidth}
\centering
\centerline{\epsfig{figure=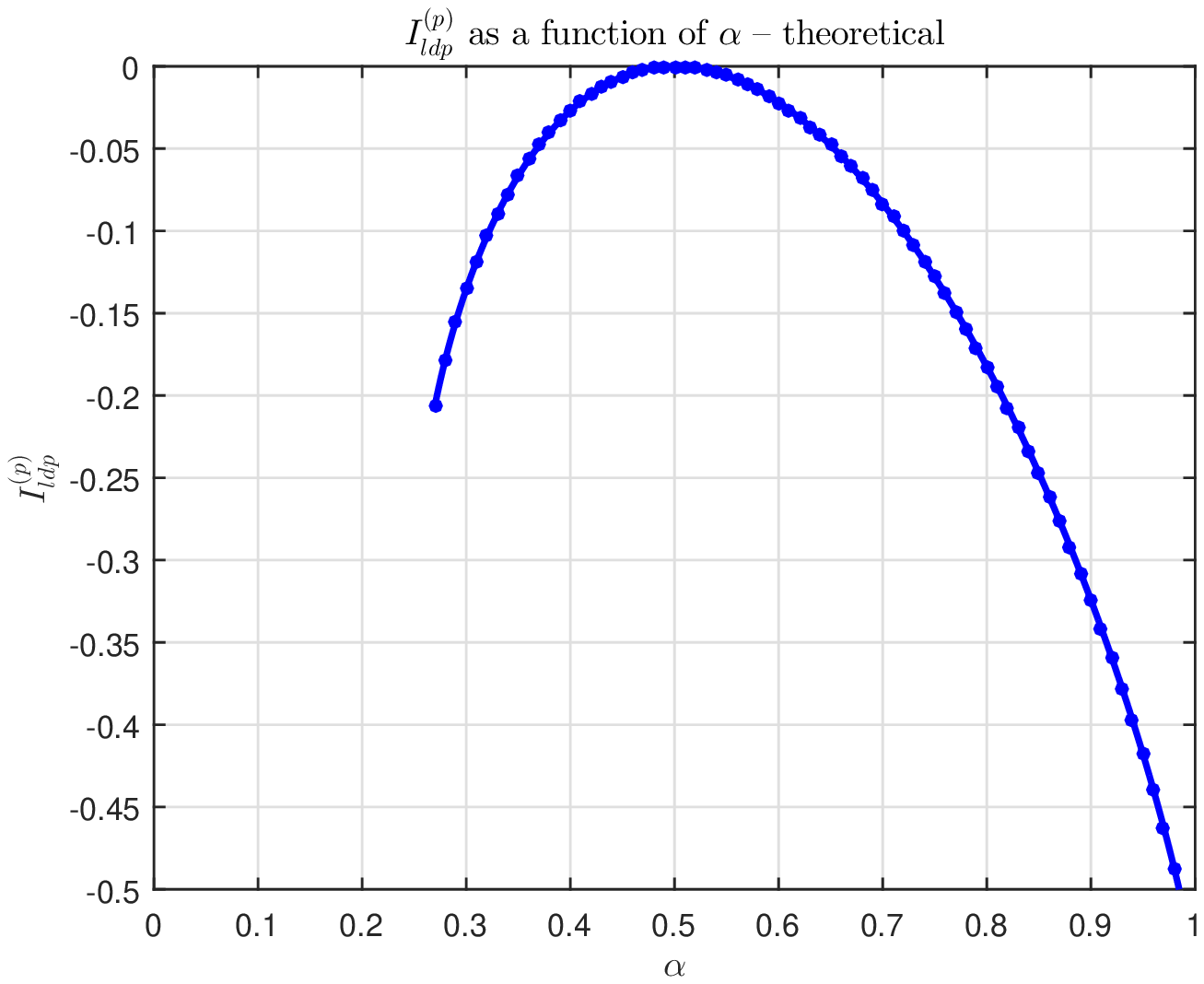,width=9cm,height=7cm}}
\end{minipage}
\begin{minipage}[b]{.5\linewidth}
\centering
\centerline{\epsfig{figure=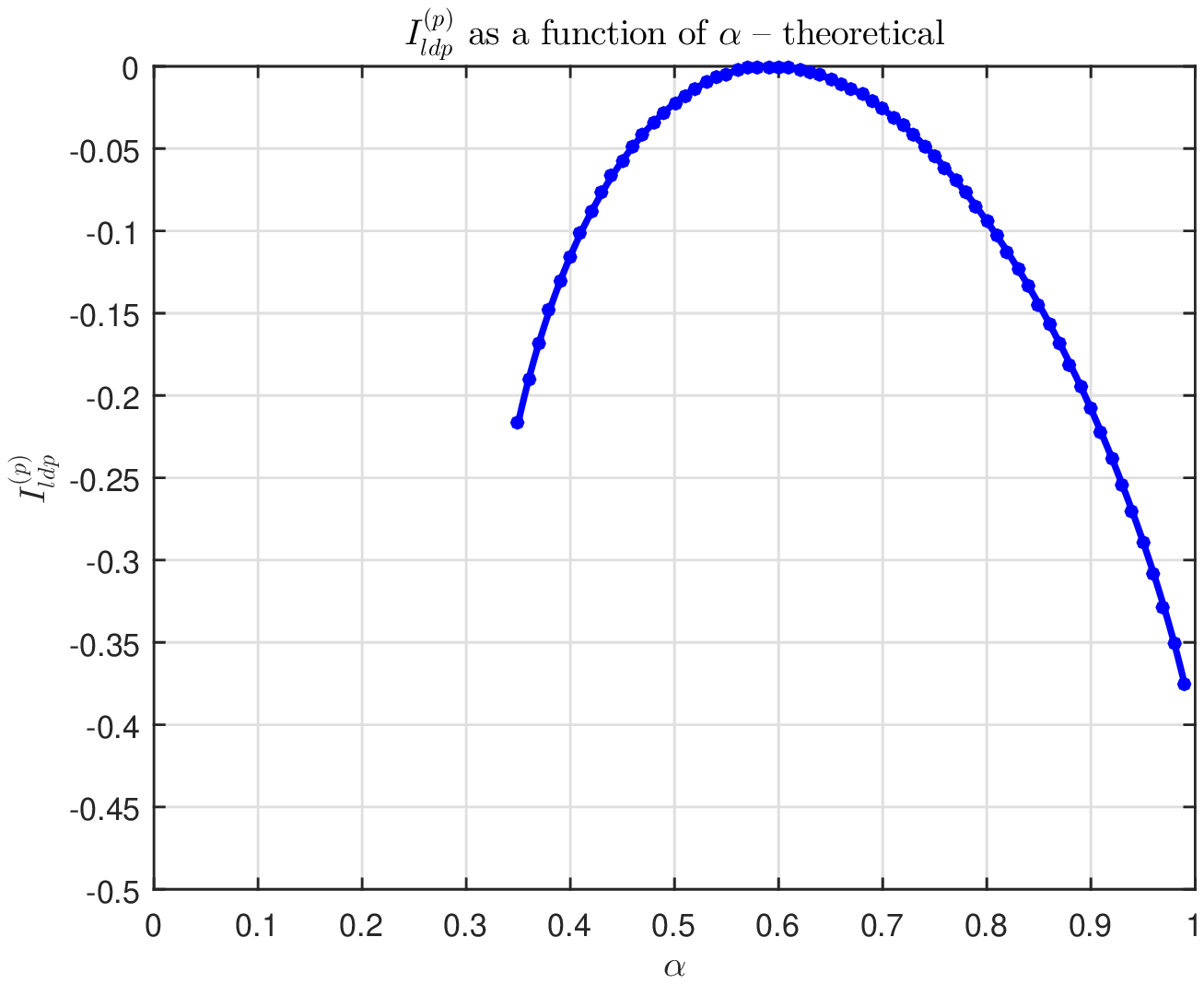,width=9cm,height=7cm}}
\end{minipage}
\caption{$I^{(p)}_{ldp}$ as a function of $\alpha$ for $\eta=0.5$; left -- $\beta=0.25896$; right -- $\beta=\frac{1}{3}$}
\label{fig:l1regldpIerrub}
\end{figure}

\begin{table}[h]
\caption{A collection of values for $\beta_1$, $\beta_0$, $\nu$, $A_0$, $c_3$, $\gamma$, and $I^{(p)}_{ldp}(\alpha,\beta)$ in Theorem \ref{thm:ldp3}; $\beta=0.25896$, $\eta=0.5$}\vspace{.1in}
\hspace{-0in}\centering
\begin{tabular}{||c||c|c|c|c|c||}\hline\hline
$\alpha$ & $ 0.40 $ & $ 0.45 $ & $ 0.50 $ & $ 0.55 $ & $ 0.60 $ \\ \hline\hline
$\beta_1$& $ 0.2095 $ & $ 0.2328 $ & $ 0.2590 $ & $ 0.2881 $ & $ 0.3207 $ \\ \hline
$\beta_0$& $ -0.2306 $ & $ 0.0715 $ & $ 0.2590 $ & $ 0.3918 $ & $ 0.4946 $ \\ \hline\hline
$\nu$    & $ 1.1725 $ & $ 1.0734 $ & $ 0.9837 $ & $ 0.9005 $ & $ 0.8218 $ \\ \hline
$A_0$    & $ 1.7900 $ & $ 1.2964 $ & $ 1.0000 $ & $ 0.7997 $ & $ 0.6534 $ \\ \hline
$c_3$    & $ -0.7788 $ & $ -0.3522 $ & $ -0.0000 $ & $ 0.3343 $ & $ 0.6793 $ \\ \hline
$\gamma$ & $ 0.1767 $ & $ 0.2587 $ & $ 0.3536 $ & $ 0.4637 $ & $ 0.5927 $ \\ \hline\hline
$I^{(p)}_{ldp}(\alpha,\beta)$& $ \mathbf{-0.0270} $ & $ \mathbf{-0.0063} $ & $ \mathbf{-0.0000} $ & $ \mathbf{-0.0057} $ & $ \mathbf{-0.0220} $ \\ \hline\hline
\end{tabular}
\label{tab:Ildptab1}
\end{table}

\begin{figure}[htb]
\centering
\centerline{\epsfig{figure=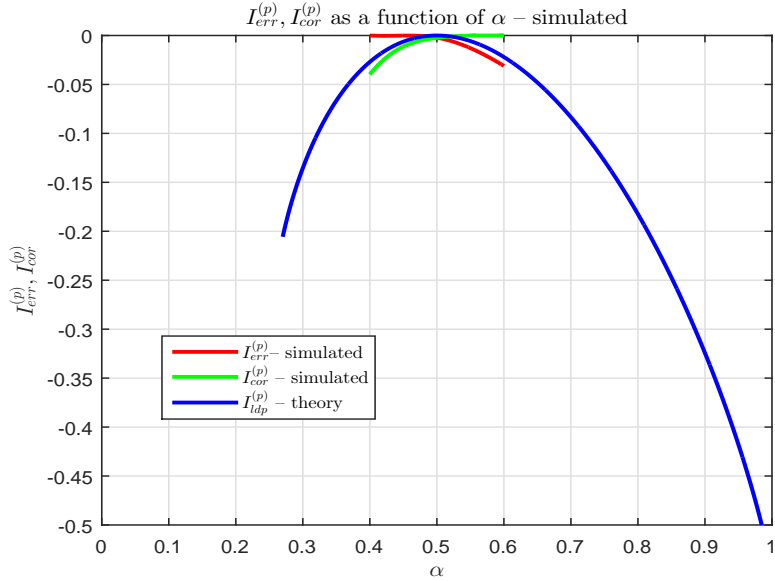,width=11.5cm,height=8cm}}
\caption{Partial $\ell_1$'s weak LDP rate function -- theory and simulation; $\beta=0.25896$, $\eta=0.5$}
\label{fig:weakl1LDPthrsim}
\end{figure}

\begin{table}[h]
\caption{$I^{(p)}_{err}(\alpha,\beta)$, $I^{(p)}_{cor}(\alpha,\beta)$ -- simulated; $I^{(p)}_{ldp}(\alpha,\beta)$ calculated for $\beta=0.25896$ and $\eta=0.5$}\vspace{.1in}
\hspace{-0in}\centering
\begin{tabular}{||c||c|c|c|c|c||}\hline\hline
$\alpha$ & $ 0.40 $ & $ 0.45 $ & $ 0.50 $ & $ 0.55 $ & $ 0.60 $ \\ \hline\hline
$k$      & $ 52 $ & $ 78 $ & $ 78 $ & $ 78 $ & $ 52 $\\ \hline
$m$      & $ 80 $ & $ 135 $ & $ 150 $ & $ 165 $ & $ 120 $\\ \hline
$n$      & $ 200 $ & $ 300 $ & $ 300 $ & $ 300 $ & $ 200 $\\ \hline\hline
$I^{(p)}_{err}(\alpha,\beta)$ -- simulated & $ -0.0000 $ & $ -0.0001 $ & \red{$ \mathbf{-0.0022} $} & \red{$ \mathbf{-0.0129} $} & \red{$ \mathbf{-0.0311} $} \\ \hline
$I^{(p)}_{cor}(\alpha,\beta)$ -- simulated & \gr{$ \mathbf{-0.0398} $} & \gr{$ \mathbf{-0.0113} $} & \gr{$ \mathbf{-0.0024} $} & $ -0.0001 $ & $ -0.0000 $ \\ \hline\hline
$I^{(p)}_{ldp}(\alpha,\beta)$ -- theory & \bl{$ \mathbf{-0.0270} $} & \bl{$ \mathbf{-0.0063} $} & \bl{$ \mathbf{-0.0000} $} & \bl{$ \mathbf{-0.0057} $} & \bl{$ \mathbf{-0.0220} $} \\ \hline\hline
\end{tabular}
\label{tab:Ildptab2}
\end{table}

\section{Hidden partial $\ell_1$}
\label{sec:hidparell1}

In this section we will look at an alternative form of the partial $\ell_1$ to which we will refer as the \emph{hidden} partial $\ell_1$. Before we introduce the hidden partial $\ell_1$ we will briefly discuss why one would be interesting in such a variant of the standard or even partial $\ell_1$.

As is now well known the main source of hardness of the problem in (\ref{eq:l0}) is determining the location of the nonzero components of $\x$. (\ref{eq:l1}), the standard $\ell_1$ relaxation of (\ref{eq:l0}), happens to have the ability to correctly identify these locations in some cases. In statistical contexts this has been rigorously mathematically confirmed through the work of \cite{DonohoPol,DonohoUnsigned,StojnicCSetam09,StojnicUpper10} and the large systems dimensions for which this happens have been fully determined. In the previous section though we went a bit further and looked at a bit relaxed scenario where one assumes that some of the unknown locations are a priori known. A modified version of (\ref{eq:l1}), namely (\ref{eq:l1imp}), is then typically employed and \cite{StojnicICASSP10knownsupp,StojnicTowBettCompSens13} provided its a detailed rigorous mathematical performance characterization. As discussed earlier, the main conclusion was that if some additional information about the unknown locations is available the recoverable sparsity should be higher. In fact not only that, \cite{StojnicICASSP10knownsupp,StojnicTowBettCompSens13} precisely quantified how much higher it will be. That in a way relaxed the original sparse recovery problem (\ref{eq:l0}), so that now one can, instead of solely searching for the algorithms that solve it exactly, also look at the algorithms that solve it so that only a fraction of the unknown locations (i.e. $supp(\x)$) is correctly identified.

Of course, the above comes with a small catch. Namely, if one is to expect (\ref{eq:l1imp}) to be as successful as the results of the previous section (and \cite{StojnicICASSP10knownsupp,StojnicTowBettCompSens13}) predict, one should ensure that a set $\Pi$ contains nothing more than a subset of $supp(\x)$. That can be a bit problematic though. Namely, most of the algorithms that fail to solve (\ref{eq:l0}) still provide an estimate for $supp(\x)$ that does contain a portion of the true $supp(\x)$. That is their good feature and is similar to what one needs to be able to run (\ref{eq:l1imp}). The problem typically is that part of such estimates are also the locations that are not in $supp(\x)$ and (\ref{eq:l1imp}) and its analysis from the previous sections do not allow for that. To handle this kind of situation \cite{StojnicTowBettCompSens13} went a bit further and introduced the so-called \emph{hidden} partially known $\ell_1$ as a modification of the standard/partial $\ell_1$. Before explaining this modification we will first briefly recall on a couple of definitions introduced in \cite{StojnicTowBettCompSens13}. We start by introducing vectors with \emph{hidden} partially known support. Let $\kappa\subset \{1,2,\dots,n\}$ and let the cardinality of $\kappa$ be $k$ (we will for the simplicity choose $k$; however our results easily extend to any other value). Let $\Pi$ be the intersection of the set of nonzero locations of $\x$ ($supp(\x)$) and $\kappa$. As in the previous sections, $\Pi$ is the set that is known to contain locations of some of the nonzero elements of $\x$. Differently though from what was the case earlier, $\Pi$ is not known now. Instad $\kappa$ is now known and the fact that $\Pi\in\kappa$. For the concreteness, we will again assume that the cardinality of $\Pi$ is $\eta k$ (where $\eta$ is again a constant independent of $n$ and $0\leq \eta\leq 1$) and that $\x$ is a vector with \emph{hidden} partially known support (clearly, $\kappa$ will represent the estimate of $\x$'s support ($supp(\x)$)). Then the above mentioned hidden partial $\ell_1$ assumes the following slight adjustment to (\ref{eq:l1imp})
\begin{eqnarray}
\mbox{min} & & \sum_{i\notin\kappa} |\x_i|\nonumber \\
\mbox{subject to} & & A\x=\y. \label{eq:l1imphidden}
\end{eqnarray}
\cite{StojnicTowBettCompSens13} then proceeded a bit further and provided a performance characterization of (\ref{eq:l1imphidden}). Such performance characterization relates to its PT characterizations. Here we will further widen our understanding of (\ref{eq:l1imphidden}) and the PT phenomena that comes with it by providing a set of LDP type of results in flavor similar to the ones obtained in earlier sections for (\ref{eq:l1imp}). We will split the remaining presentation into two main parts, the first one that we will use to discuss the hidden partial $\ell_1$'s PT itself and the second one that we will use to discuss the corresponding LDP.

\subsection{Hidden partial $\ell_1$ -- phase transitions}
\label{sec:hidparphasetrans}

In this section we discuss the phase transitions (PTs) of the hidden partial $\ell_1$. All the definitions introduced when we discussed the partial $\ell_1$ remain in place. Knowing that then the PTs are of course fully characterized once the corresponding PT curves in $(\alpha,\beta)$ plane are determined. The following theorem determines these curves and automatically settles the hidden partial $\ell_1$ weak PTs.
\begin{theorem}(\cite{StojnicTowBettCompSens13} Exact hidden partial $\ell_1$'s weak threshold/PT)
Let $A$ be an $m\times n$ matrix in (\ref{eq:l0})
with i.i.d. standard normal components. Let
the unknown $\x$ that solves (\ref{eq:l0}) be $k$-sparse. Further, let the location and signs of nonzero elements of $\x$ be arbitrarily chosen but fixed. Moreover, let the set of nonzero locations of $\x$ be $K$. Let $\kappa\subset \{1,2,\dots,n\}$ be a given set of cardinality $k$ such that the cardinality of set $K\cap \kappa$ is $\eta k$. Let $k,m,n$ be large
and let $\alpha_w=\frac{m}{n}$ and $\beta_w=\frac{k}{n}$ be constants
independent of $m$ and $n$. Let $\erfinv$ be the inverse of the standard error function associated with zero-mean unit variance Gaussian random variable.  Further, let $\alpha_w$ and $\beta_w$ satisfy the following \textbf{fundamental characterization of the \emph{hidden partial} $\ell_1$'s PT}

\begin{center}
\shadowbox{$
\xi^{(hp)}_{\alpha_{w},\eta}(\beta_w)\triangleq\psi^{(hp)}_{\beta_w,\eta}(\alpha_{w})\triangleq
\frac{(1-\beta_w(2-\eta))\sqrt{\frac{2}{\pi}}e^{-\lp\erfinv\lp\frac{1-\alpha_w}{1-\beta_w(2-\eta)}\rp\rp^2}}{(\alpha_w-\beta_w)\sqrt{2}\erfinv \lp\frac{1-\alpha_w}{1-\beta_w(2-\eta)}\rp}=1.
$}
-\vspace{-.5in}\begin{equation}
\label{eq:thmweaktheta2hidpar}
\end{equation}
\end{center}

Then:
\begin{enumerate}
\item If $\alpha>\alpha_w$ then with overwhelming probability the solution of (\ref{eq:l1imphidden}) is the $k$-sparse $\x$ that solves (\ref{eq:l0}).
\item If $\alpha<\alpha_w$ then with overwhelming probability there will be a $k$-sparse $\x$ (from a set of $\x$'s with fixed locations and signs of nonzero components) such that the solution of (\ref{eq:l0}) is \textbf{not} the solution of (\ref{eq:l1imphidden}).
    \end{enumerate}
\label{thm:thmweakthrhidpar}
\end{theorem}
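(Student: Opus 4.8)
The plan is to run, step for step, the same argument that yields Theorem \ref{thm:thmweakthr}, adapting each ingredient to the fact that in (\ref{eq:l1imphidden}) the \emph{whole} set $\kappa$ (and not merely the genuine support subset $\Pi$) is exempt from the $\ell_1$ penalty. The first step is to establish the null-space characterization that plays, for the hidden partial $\ell_1$, the role that Theorem \ref{thm:thmknownsuppcond} plays for the partial $\ell_1$. Writing $K=\mathrm{supp}(\x)$, the $n$ coordinates split into three groups: the $|\kappa|=k$ \emph{free} coordinates (of which $\eta k$ lie on $K$ and the remaining $(1-\eta)k$ are zeros of $\x$ lying in $\kappa\setminus K$); the $(1-\eta)k$ \emph{penalized nonzero} coordinates $K\setminus\kappa$, with fixed (say positive) signs; and the $n-|\kappa\cup K|=n-k(2-\eta)$ \emph{penalized zero} coordinates on $(\kappa\cup K)^c$. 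The standard subgradient/convexity argument then shows that the solution of (\ref{eq:l1imphidden}) coincides with the $k$-sparse $\x$ for every admissible magnitude pattern if and only if
\[
(\forall\,\w\in\mR^n\,|\,A\w=\0,\ \w\neq\0)\qquad -\sum_{i\in K\setminus\kappa}\w_i<\sum_{i\in(\kappa\cup K)^c}|\w_i|,
\]
while a violation of this inequality (with ``$\geq$'') furnishes, by the usual perturbation, a $k$-sparse $\x$ that solves (\ref{eq:l0}) but not (\ref{eq:l1imphidden}). Crucially, the free coordinates drop out of both sides, so the fact that $\kappa$ is known but $\Pi$ is not plays no role for the weak threshold.

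Given this reformulation, the second step is the probabilistic one and proceeds exactly as in Section \ref{sec:ldp}. For i.i.d. standard normal $A$ one introduces $\Swh\triangleq\{\w\in S^{n-1}\,|\,-\sum_{i\in K\setminus\kappa}\w_i<\sum_{i\in(\kappa\cup K)^c}|\w_i|\}$ (the analogue of $\Sw$ in (\ref{eq:defSwpr})), writes the failure probability as $P(\min_{\w\in\Swh}\|A\w\|_2\le 0)=P(\max_{\w\in\Swh}\min_{\|\y\|_2=1}\y^TA\w\ge 0)$ as in (\ref{eq:ldpprob}), and invokes the Gordon-type comparison inequality. For $\alpha>\alpha_w$ this produces an upper bound governed by $E\|\g\|_2-Ew(\h,\Swh)>0$, hence $P_{err}\to 0$, which is the first assertion; for $\alpha<\alpha_w$ the complementary (escape-through-a-mesh) lower bound, which is exactly the mechanism behind the second parts of Theorems \ref{thm:thmknownsuppcond} and \ref{thm:thmweakthr}, forces a non-vanishing failure probability and gives the second assertion. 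Everything therefore reduces to the Gaussian width $w(\h,\Swh)=\max_{\w\in\Swh}\h^T\w$.

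The third step is the explicit evaluation of $w(\h,\Swh)$ along the lines of (\ref{eq:workww2})--(\ref{eq:ldpwhSw}). The only changes relative to the partial $\ell_1$ calculation are that the free block now carries $k$ coordinates instead of $\eta k$ and the penalized-zero block carries $n-k(2-\eta)$ coordinates instead of $n-k$; solving the corresponding quadratic program and dualizing gives
\[
w(\h,\Swh)=\min_{\nu\ge 0}\sqrt{\sum_{i\in(\kappa\cup K)^c}\max(|\h_i|-\nu,0)^2+\sum_{i\in K\setminus\kappa}(\nu-\h_i)^2+\sum_{i\in\kappa}\h_i^2}.
\]
Concentration replaces the three sums by $n$ times $(1-\beta_w(2-\eta))\,E\max(|h|-\nu,0)^2$, $(1-\eta)\beta_w\,E(\nu-h)^2$ and $\beta_w$, respectively; carrying out these Gaussian expectations, combining the first-order stationarity condition in $\nu$ with the requirement that $\alpha_w$ equal the minimizing value of the normalized width-squared, and finally substituting $\nu=\sqrt{2}\,\erfinv\!\big(\tfrac{1-\alpha_w}{1-\beta_w(2-\eta)}\big)$ collapses the two relations into the single balance equation (\ref{eq:thmweaktheta2hidpar}). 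In other words, the zero-coordinate fraction $1-\beta_w$ of the partial case is replaced by $1-\beta_w(2-\eta)$ and the free-coordinate fraction $\eta\beta_w$ becomes $\beta_w$, so that $\alpha_w-\eta\beta_w$ in (\ref{eq:thmweaktheta2}) turns into $\alpha_w-\beta_w$.

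I expect the main obstacle to be the combinatorial bookkeeping of the first step rather than anything analytic: one has to partition the coordinates correctly into the three groups, check that the unpenalized zeros sitting in $\kappa\setminus K$ genuinely disappear from the null-space inequality, and verify that the inequality is tight in \emph{both} directions so that the two assertions are sharp (the failure direction, in particular, needs the reverse comparison inequality, precisely as in \cite{StojnicTowBettCompSens13}). The $\nu$-optimization of the third step is routine calculus of the type already carried out in Section \ref{sec:propxi}, and the uniqueness of the $\beta_w$ solving (\ref{eq:thmweaktheta2hidpar}) — which is what makes that equation an unambiguous PT characterization — would be treated separately, in complete analogy with the arguments given there for $\xi^{(p)}_{\alpha,\eta}$ and $\psi^{(p)}_{\beta,\eta}$.
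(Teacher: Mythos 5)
Your proposal is correct and follows essentially the same route as the paper, which proves this theorem by citing \cite{StojnicTowBettCompSens13}: the null-space condition you derive is exactly the one restated in Theorem \ref{thm:thmknownsuppcondhidpar} (with the three coordinate groups of sizes $n-k(2-\eta)$, $(1-\eta)k$, and $k$), and your width evaluation and $\nu$-optimization reproduce the computation in Section \ref{sec:ldphidpar}, amounting to the substitution $\beta\leftarrow(2-\eta)\beta$, $\eta\leftarrow\frac{1}{2-\eta}$ in the partial $\ell_1$ characterization (\ref{eq:thmweaktheta2}), which yields (\ref{eq:thmweaktheta2hidpar}).
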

\begin{proof}
It was established in \cite{StojnicTowBettCompSens13}.
\end{proof}
It is now also relatively easy to see that the above PT characterization is unambiguous. Namely, the change $\beta\leftarrow (2-\eta)\beta$ and $\eta\leftarrow \frac{1}{2-\eta}$ transforms the above hidden partial $\ell_1$ PT into the partial $\ell_1$ PT which by considerations provided in Section \ref{sec:propxi} is unambiguous (both sets of equations, $\xi^{(hp)}_{\alpha,\eta}(\beta)=1$ and $\psi^{(hp)}_{\beta,\eta}(\alpha)=1$ have unique solutions for any fixed $\eta\in(0,1)$ and any fixed $\alpha$, $\beta$ respectively). In Figure \ref{fig:weakl1PThidpar} we show the theoretical PT curves that one can obtain based on (\ref{eq:thmweaktheta2hidpar}) for several different values of $\eta$. Clearly, as $\eta$ increases the recoverable sparsity increases as well. In other words, the larger the size of the hidden partially known support of $\x$ the larger the cardinality of $supp(\x)$ that be recovered through hidden partial $\ell_1$ as well.
\begin{figure}[htb]
\centering
\centerline{\epsfig{figure=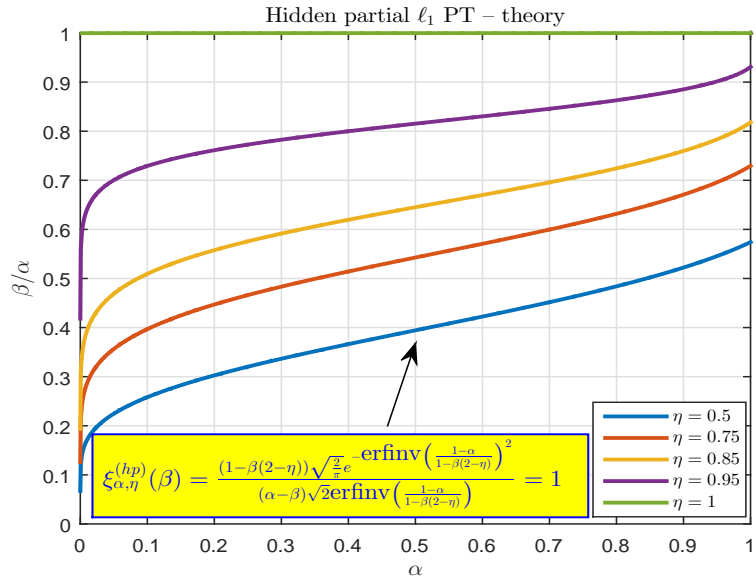,width=11.5cm,height=8cm}}
\caption{Hidden partial $\ell_1$'s weak PT; $\{(\alpha,\beta)|\xi^{(hp)}_{\alpha,\eta}(\beta)=1\}$}
\label{fig:weakl1PThidpar}
\end{figure}

\subsection{Hidden partial $\ell_1$ -- large deviations}
\label{sec:ldphidpar}

In this section we discuss the LDP of the hidden partial $\ell_1$. We will try to emulate as much as possible what was done when we analyzed the LDPs of the partial $\ell_1$. Along the same lines we will then skip all the arguments that directly translate to the hidden case and instead focus on those that bring/emphasize the difference. To that end we start by recalling on a couple of results that we established in \cite{StojnicTowBettCompSens13} (and in their core form in \cite{StojnicCSetam09,StojnicICASSP09,StojnicICASSP10knownsupp}). We again emphasize that these are among the key unsung heros of all the success that we achieved in designing our probabilistic approach for characterizing PTs and LDPs.

As was done in Section \ref{sec:ldp}, for the concreteness/simplicity and without loss of generality we will assume that the elements $\x_{1},\x_{2},\dots,\x_{n-k}$ of $\x$ are equal to zero and that the elements $\x_{n-k+1},\x_{n-k+2},\dots,\x_n$ have fixed signs, say they all are positive (these signs of course are not known beforehand and can not be used in the algorithms design). Moreover, we will also without loss of generality assume that $\kappa$ in (\ref{eq:l1imphidden}) is $\kappa=\{n-k-(1-\eta)k+1,n-k-(1-\eta)k+2,\dots,n-k,n-\eta k+1,n-\eta k+2,\dots,n\}$, where $0\leq\eta\leq 1$ (this is fairly obvious but for the completeness we state that when $k$ is finite $\eta$ is throughout the paper assumed to be such that $\eta k$ is an integer). The following was obtained in \cite{StojnicTowBettCompSens13} relying on the breakthrough observations of \cite{StojnicCSetam09,StojnicICASSP09,StojnicICASSP10knownsupp} and is one of the key features that enabled us to run the entire machinery developed in \cite{StojnicCSetam09,StojnicICASSP09,StojnicICASSP10knownsupp,StojnicTowBettCompSens13}.
\begin{theorem}(\cite{StojnicTowBettCompSens13} Nonzero elements of $\x$ have fixed signs and location)
Assume that an $m\times n$ measurement matrix $A$ is given. Let $\x$
be a $k$ sparse vector. Also let $\x_1=\x_2=\dots=\x_{n-k}=0$. Let the signs of $\x_{n-k+1},\x_{n-k+2},\dots,\x_n$ be fixed, say all positive. let $\kappa=\{n-k-(1-\eta)k+1,n-k-(1-\eta)k+2,\dots,n-k,n-\eta k+1,n-\eta k+2,\dots,n\}$, where $0\leq\eta\leq 1$. Further, assume that $\y\triangleq A\x$ and that $\w$ is
an $n\times 1$ vector. If
\begin{equation}
(\forall \w\in \textbf{R}^n | A\w=0) \quad  -\sum_{i=n-k+1}^{n-\eta k} -\w_i<\sum_{i=1}^{n-k-(1-\eta)k}|\w_{i}|,
\end{equation}
then the solutions of (\ref{eq:l1imphidden}) and (\ref{eq:l0}) coincide. Moreover, if
\begin{equation}
(\exists \w\in \textbf{R}^n | A\w=0) \quad  -\sum_{i=n-k+1}^{n-\eta k} -\w_i\geq \sum_{i=1}^{n-k-(1-\eta)k}|\w_{i}|,
\label{eq:thmeqgenhidpar}
\end{equation}
then there will be a $k$-sparse $\x$ (from the set of $\x$'s with fixed locations and signs of nonzero components) such that the solution of (\ref{eq:l0}) and is not the solution of (\ref{eq:l1imphidden}).
\label{thm:thmknownsuppcondhidpar}
\end{theorem}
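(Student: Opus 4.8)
The plan is to prove this exactly as one proves the analogous null-space characterization for the plain partial $\ell_1$ (Theorem \ref{thm:thmknownsuppcond}), the only change being that the $(1-\eta)k$ a priori ``guessed'' coordinates that happen to be \emph{zero} are now among those excluded from the objective of (\ref{eq:l1imphidden}). Concretely, I would first invoke the standing reduction (zero on the first $n-k$ coordinates, positive on the last $k$, and $\kappa$ as displayed above) and split the index set that the objective of (\ref{eq:l1imphidden}) sums over, namely $\{1,\dots,n\}\setminus\kappa$, into the ``free zero'' block $J_{0}=\{1,\dots,n-k-(1-\eta)k\}$ and the ``free positive'' block $J_{+}=\{n-k+1,\dots,n-\eta k\}$. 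Writing $f(\x)=\sum_{i\in J_{0}}|\x_{i}|+\sum_{i\in J_{+}}|\x_{i}|$ for this objective, the claim is the deterministic statement that $f$ is (uniquely) minimized over $\{\x: A\x=\y\}$ at the $k$-sparse vector iff the stated null-space inequality holds for every nonzero $\w$ with $A\w=\0$.

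For the sufficiency direction, let $\x$ be the $k$-sparse vector solving (\ref{eq:l0}); every competitor feasible for (\ref{eq:l1imphidden}) has the form $\x+\w$ with $A\w=\0$, $\w\neq\0$. On $J_{0}$ we have $\x_{i}=0$, so $|\x_{i}+\w_{i}|=|\w_{i}|$, and on $J_{+}$ we use $\x_{i}>0$ together with the triangle inequality to get $|\x_{i}+\w_{i}|\ge \x_{i}+\w_{i}$. Summing, $f(\x+\w)-f(\x)\ge \sum_{i\in J_{0}}|\w_{i}|+\sum_{i=n-k+1}^{n-\eta k}\w_{i}$. Since the null space of $A$ is symmetric, the hypothesis applied to $-\w$ reads $\sum_{i=n-k+1}^{n-\eta k}(-\w_{i})<\sum_{i=1}^{n-k-(1-\eta)k}|\w_{i}|$, which is exactly the statement that the right-hand side of the previous bound is strictly positive; hence $f(\x+\w)>f(\x)$ for every nonzero $\w$ in the null space, so $\x$ is the unique minimizer of (\ref{eq:l1imphidden}) and therefore coincides with the solution of (\ref{eq:l0}).

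For the necessity direction, suppose some $\w\neq\0$ with $A\w=\0$ satisfies $-\sum_{i=n-k+1}^{n-\eta k}-\w_{i}\ge\sum_{i=1}^{n-k-(1-\eta)k}|\w_{i}|$, i.e. $\sum_{i=n-k+1}^{n-\eta k}\w_{i}\ge\sum_{i\in J_{0}}|\w_{i}|$. I would then exhibit a bad instance by keeping the prescribed support $\{n-k+1,\dots,n\}$ and the prescribed positive signs, but choosing the magnitudes of $\x_{n-k+1},\dots,\x_{n}$ large enough that $\x_{i}-\w_{i}$ retains the sign of $\x_{i}$ for every $i\in J_{+}$. For such an $\x$ the point $\x-\w$ is feasible for (\ref{eq:l1imphidden}) and $f(\x-\w)-f(\x)=\sum_{i\in J_{0}}|\w_{i}|-\sum_{i=n-k+1}^{n-\eta k}\w_{i}\le 0$; since $\x-\w$ is not $k$-sparse, the $k$-sparse $\x$ solving (\ref{eq:l0}) fails to be the (unique) solution of (\ref{eq:l1imphidden}), with the failure being strict precisely when the hypothesis is a strict inequality.

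There is no genuine analytic obstacle: the argument is the classical null-space/triangle-inequality computation, identical in structure to the proof of Theorem \ref{thm:thmknownsuppcond}. The only places that require care are purely combinatorial — correctly delimiting the index blocks once the $(1-\eta)k$ guessed zero coordinates are absorbed into $\kappa$ (so the free zero block shrinks from $\{1,\dots,n-k\}$ to $\{1,\dots,n-k-(1-\eta)k\}$, while the free positive block is unchanged), and tracking the double negation in the way the hypothesis is written, using the null-space symmetry $\w\leftrightarrow-\w$ to pass between its two equivalent forms. As an alternative one could try to deduce the statement from Theorem \ref{thm:thmknownsuppcond} via the coordinate relabeling used for the PT formula in Section \ref{sec:hidparphasetrans}, but since the sparsity of the underlying vector differs, carrying out the bookkeeping directly is cleaner.
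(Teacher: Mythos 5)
Your argument is correct, and it is the standard null-space/triangle-inequality proof with the index bookkeeping done properly: the objective of (\ref{eq:l1imphidden}) runs exactly over $J_0\cup J_+$ as you delimit them, the sufficiency bound $f(\x+\w)-f(\x)\ge\sum_{i\in J_0}|\w_i|+\sum_{i=n-k+1}^{n-\eta k}\w_i$ combined with the hypothesis applied to $-\w$ gives strict positivity, and the converse construction with large magnitudes on the true support exhibits a feasible $\x-\w\neq\x$ whose objective is no larger than $f(\x)$. Note, however, that the paper itself gives no proof of Theorem \ref{thm:thmknownsuppcondhidpar} --- it is imported verbatim from \cite{StojnicTowBettCompSens13} --- so there is no in-paper argument to compare against; your write-up simply supplies the proof that the cited works carry out, in the same spirit as Theorem \ref{thm:thmknownsuppcond}. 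One small remark: the parenthetical claim that $\x-\w$ is ``not $k$-sparse'' is neither needed nor always true (e.g.\ if $\w$ were supported inside $supp(\x)$); what the converse actually requires is only that $\x-\w\neq\x$ is feasible with $f(\x-\w)\le f(\x)$, so that the $k$-sparse $\x$ fails to be the unique minimizer of (\ref{eq:l1imphidden}), which is exactly the failure event the theorem (and its probabilistic use later) refers to. With that cosmetic adjustment, and the implicit restriction of the quantifier to $\w\neq\0$ (shared with the statement itself), the proof is complete.
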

To facilitate the exposition we set
\begin{equation}
\Sw^{(hp)}\triangleq\{\w\in S^{n-1}| \quad -\sum_{i=n-k+1}^{n-\eta k} \w_i<\sum_{i=1}^{n-k-(1-\eta)k}|\w_{i}|\}.\label{eq:defSwpr}
\end{equation}
As in Section \ref{sec:ldp}, we will start with the analysis of the so-called LDPs upper tail. Fairly soon it will then become clear that the analogous versions of all other results derived in Section \ref{sec:ldp} will quickly follow.

\subsubsection{Upper tail}
\label{sec:uppertailhidpar}

As usual, the LDPs upper tail assumes consideration of points $(\alpha,\beta)$ such that $\alpha\geq \alpha_w$ where $\alpha_w$ is such that $\psi^{(hp)}_{\beta,\eta}(\alpha_w)=\xi^{(hp)}_{\alpha_w,\eta}(\beta)=1$. For the time being we assume this regime and follow what was done in Section \ref{sec:ldp}. Namely, assuming that the elements of $A$ are i.i.d. standard normals we look at the following probability
\begin{equation}
P^{(hp)}_{err}\triangleq P(\min_{\w\in S^{(hp)}_w}\|A\w\|_2\leq 0)=P(\max_{\w\in S^{(hp)}_w}\min_{\|\y\|_2=1}(\y^T A\w )\geq 0).
\label{eq:ldpprobhidpar}
\end{equation}
where, as earlier, $P^{(hp)}_{err}$ is the so-called probability of error/failure, i.e. the probability that (\ref{eq:l1imphidden}) fails to produce the solution of (\ref{eq:l0}). As in (\ref{eq:widthdefSw}) we, for any $c_3\geq 0$, have
\begin{equation}
P^{(hp)}_{err}\leq e^{-\frac{c_3^2}{2}}E\max_{\w\in S^{(hp)}_w}\min_{\|\y\|_2=1}e^{-c_3(\y^T A\w+g)}\leq
e^{-\frac{c_3^2}{2}}Ee^{-c_3\|\g\|_2}Ee^{c_3w(\h,S^{(hp)}_w)},
\label{eq:ldpprob3hidpar}
\end{equation}
where
\begin{equation}
w(\h,\Sw^{(hp)})\triangleq\max_{\w\in \Sw^{(hp)}} (\h^T\w), \label{eq:widthdefSwhidpar}
\end{equation}
and, as earlier, the elements of $\h$ are i.i.d. standard normals. Continuing further as in Section \ref{sec:ldp}, we have for $w(\h,\Sw^{(hp)})$ in (\ref{eq:widthdefSwhidpar})
\begin{eqnarray}
w(\h,\Sw^{(hp)}) = \max_{\bar{\y}\in \mR^{n}} & &  \sum_{i=1}^{n} \hw_i \bar{\y}_i\nonumber \\
\mbox{subject to} &  & \bar{\y}_i\geq 0, 0\leq i\leq n-k\nonumber \\
& & \sum_{i=n-k+1}^{n-\eta k}\bar{\y}_i\geq \sum_{i=1}^{n-k-(1-\eta)k} \bar{\y}_i \nonumber \\
& & \sum_{i=1}^{n}\bar{\y}_i^2\leq 1.\label{eq:workww2hidpar}
\end{eqnarray}
Finally, after solving (\ref{eq:workww2hidpar}) as in Section \ref{sec:ldp} (and ultimately in \cite{StojnicCSetam09,StojnicCSetamBlock09,StojnicBlockasymldpfinn15,StojnicLiftStrSec13,StojnicICASSP10knownsupp,StojnicTowBettCompSens13}) one obtains
\begin{eqnarray}
w(\h,\Sw) & = & \min_{\nu\geq0,\gamma\geq 0} (\frac{\sum_{i=1}^{n-k-(1-\eta)k}\max(\hw_i-\nu,0)^2+\sum_{i=n-k+1}^{n-\eta k}(\hw_i+\nu)^2}{4\gamma}\nonumber \\
&&+\frac{\sum_{i=n-\eta k+1}^{n}(\hw_i)^2+\sum_{i=n-k-(1-\eta)k+1}^{n-k}(\hw_i)^2}{4\gamma}+\gamma)\nonumber \\
& = & \min_{\nu\geq0}\sqrt{\sum_{i=1}^{n-k-(1-\eta)k}\max(\hw_i-\nu,0)^2+\sum_{i=n-k+1}^{n-\eta k}(\hw_i+\nu)^2+\sum_{i=n-\eta k+1}^{n}(\hw_i)^2+\sum_{i=n-k-(1-\eta)k+1}^{n-k}(\hw_i)^2}.\nonumber \\\label{eq:ldpwhSw}
\end{eqnarray}
We summarize the above methodology to upper bound $P^{(hp)}_{err}$ in the following theorem.
\begin{theorem}
Let $A$ be an $m\times n$ matrix in (\ref{eq:l0})
with i.i.d. standard normal components. Let
the unknown $\x$ in (\ref{eq:l0}) be $k$-sparse and let the location and the signs of nonzero elements of $\x$ be arbitrarily chosen but fixed. Moreover, let the set of nonzero locations of $\x$ be $K$. Let $\kappa\subset \{1,2,\dots,n\}$ be a given set of cardinality $k$ such that the cardinality of set $K\cap \kappa$ is $\eta k$. Let $P^{(hp)}_{err}$ be the probability that the solution of (\ref{eq:l1imphidden}) is not the solution of (\ref{eq:l0}). Then
\begin{eqnarray}
P^{(hp)}_{err}& \leq & \min_{c_3\geq 0}e^{-\frac{c_3^2}{2}}e^{-c_3\|\g\|_2}Ee^{c_3w(\h,S_w)}\nonumber \\
& = & \min_{c_3\geq 0}\left (e^{-\frac{c_3^2}{2}}\frac{1}{\sqrt{2\pi}^m}\int_{\g}e^{-\sum_{i=1}^{m}\g_i^2/2-c_3\|\g\|_2}d\g \min_{\nu\geq 0,\gamma\geq\frac{c_3}{2}} w_1^{n-k(2-\eta)}w_2^{k(1-\eta)}w_3^{k}e^{c_3\gamma}\right ),
\label{eq:ldpthm1perrub1hidpar}
\end{eqnarray}
where
\begin{eqnarray}
w_1 &=& \frac{1}{\sqrt{2\pi}}\int_{\bar{h}}e^{-\bar{h}^2/2}e^{c_3\max(|\bar{h}|-\nu,0)^2/4/\gamma}d\bar{h}
  =\frac{e^{\frac{c_3\nu^2/4/\gamma}{1-c_3/2/\gamma}}}{\sqrt{1-c_3/2/\gamma}}\erfc\left (\frac{\nu}{\sqrt{2}\sqrt{1-c_3/2/\gamma}}\right )+\erf\left (\frac{\nu}{\sqrt{2}}\right )\nonumber \\
w_2 &=& \frac{1}{\sqrt{2\pi}}\int_{\bar{h}}e^{-\bar{h}^2/2}e^{c_3(\bar{h}+\nu)^2/4/\gamma}d\bar{h}
  =\frac{e^{\frac{c_3\nu^2/4/\gamma}{1-c_3/2/\gamma}}}{\sqrt{1-c_3/2/\gamma}}\nonumber \\
w_3 &=& \frac{1}{\sqrt{2\pi}}\int_{\bar{h}}e^{-\bar{h}^2/2}e^{c_3(\bar{h})^2/4/\gamma}d\bar{h}
  =\frac{1}{\sqrt{1-c_3/2/\gamma}}.\label{eq:ldpthm1perrub2hidpar}
\end{eqnarray}\label{thm:ldp1hidpar}
\end{theorem}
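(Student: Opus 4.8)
The plan is to mirror, step for step, the derivation behind Theorem~\ref{thm:ldp1}, the only genuinely new ingredient being the coordinate bookkeeping forced by the extra block of indices of $\kappa$ that lie outside $supp(\x)$. First I would rewrite $P^{(hp)}_{err}=P(\min_{\w\in \Sw^{(hp)}}\|A\w\|_2\leq 0)$ as the min-max $P(\max_{\w\in \Sw^{(hp)}}\min_{\|\y\|_2=1}\y^TA\w\geq 0)$, introduce the auxiliary standard normal $g$, and apply the same Gaussian comparison inequality that produced (\ref{eq:ldpprob3}); this gives, for every $c_3\geq 0$, exactly the bound (\ref{eq:ldpprob3hidpar}), namely $P^{(hp)}_{err}\leq e^{-c_3^2/2}Ee^{-c_3\|\g\|_2}Ee^{c_3w(\h,\Sw^{(hp)})}$. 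This part is word-for-word the partial $\ell_1$ argument (an instance of the machinery of \cite{StojnicCSetam09,StojnicICASSP09}) and requires nothing new.

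The second step is the explicit evaluation of the spherical width $w(\h,\Sw^{(hp)})$ with $\Sw^{(hp)}$ as in (\ref{eq:defSwpr}). Via the usual absolute-value reduction one passes to the vector $\hw$ (with entries $|\h_i|$ on the zero coordinates of $\x$ and $-\h_i$ on the nonzero ones) and restricts the maximization to $\bar\y_i\geq 0$ on those coordinates, turning $w(\h,\Sw^{(hp)})$ into the linear program (\ref{eq:workww2hidpar}). The single — and crucial — change from (\ref{eq:workww2}) is that the mass-balance constraint now reads $\sum_{i=n-k+1}^{n-\eta k}\bar\y_i\geq\sum_{i=1}^{n-k-(1-\eta)k}\bar\y_i$: the $(1-\eta)k$ zero coordinates of $\x$ that happen to sit in $\kappa$ are \emph{unpenalized} and hence do not appear on the right-hand side. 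Dualizing the inequality with a multiplier $\nu\geq0$ and the norm constraint with $\gamma\geq0$, then minimizing the resulting separable problem over $\bar\y$ subject to the sign constraints, yields the closed form (\ref{eq:ldpwhSw}), exactly as in Section~\ref{sec:ldp}. The key observation for the final statement is that the unpenalized-in-$\kappa$ zero block and the a priori known nonzero block \emph{both} contribute a bare $\hw_i^2$, so there are $k$ coordinates of that type, $n-k(2-\eta)$ coordinates of $\max(\hw_i-\nu,0)^2$ type, and $(1-\eta)k$ coordinates of $(\hw_i+\nu)^2$ type.

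Third, I would use $e^{c_3w(\h,\Sw^{(hp)})}\leq e^{c_3\gamma}\prod_i e^{c_3(\cdot)_i/4\gamma}$, valid for every feasible pair $(\nu,\gamma)$ with $\gamma\geq c_3/2$ so that the one-dimensional Gaussian integrals below converge, and then exploit the independence of the $\hw_i$ to factor $Ee^{c_3w(\h,\Sw^{(hp)})}\leq \min_{\nu\geq0,\gamma\geq c_3/2}e^{c_3\gamma}\,w_1^{n-k(2-\eta)}w_2^{k(1-\eta)}w_3^{k}$. Each of $w_1,w_2,w_3$ is a scalar Gaussian integral evaluated in closed form by completing the square: for $w_1$ one splits the range into $|\bar h|\leq\nu$, giving the $\erf(\nu/\sqrt{2})$ term, and $|\bar h|>\nu$, giving the $\erfc$ term; $w_2$ and $w_3$ are immediate. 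Combining with the bound for $Ee^{-c_3\|\g\|_2}$ already used in Theorem~\ref{thm:ldp1} and minimizing over $c_3\geq0$ produces exactly (\ref{eq:ldpthm1perrub1hidpar})--(\ref{eq:ldpthm1perrub2hidpar}); since no step invokes $n\to\infty$, the bound holds for all integers $k\leq m\leq n$.

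The one place where real care is needed — and hence the main obstacle — is step two: writing out and solving the dual of (\ref{eq:workww2hidpar}) so that the $(1-\eta)k$ unpenalized-in-$\kappa$ zero coordinates are correctly merged with the $\eta k$ a priori known nonzero coordinates. Getting this right is precisely what turns the exponent $\eta k$ on $w_3$ (as in the partial $\ell_1$ Theorem~\ref{thm:ldp1}) into $k$ and the exponent $n-k$ on $w_1$ into $n-k(2-\eta)$; once this is settled, everything else is a transcription of the partial $\ell_1$ proof.
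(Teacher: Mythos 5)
Your proposal is correct and follows essentially the same route as the paper: the Gaussian comparison bound (\ref{eq:ldpprob3hidpar}), the width evaluation via the LP (\ref{eq:workww2hidpar}) and its dual, and the factorization into the scalar integrals $w_1,w_2,w_3$ with the exponents $n-k(2-\eta)$, $k(1-\eta)$, and $k$ obtained exactly by the coordinate bookkeeping you describe (the $\eta k$ supported coordinates in $\kappa$ and the $(1-\eta)k$ unsupported ones in $\kappa$ both entering as bare $\hw_i^2$ terms). This matches the considerations preceding Theorem \ref{thm:ldp1hidpar} and the adaptation of the partial $\ell_1$ argument of Theorem \ref{thm:ldp1} that the paper itself invokes.
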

\begin{proof}
Follows from the above considerations, what was presented in Section \ref{sec:ldp}, and ultimately through an adaptation of the mechanisms developed in \cite{StojnicCSetam09,StojnicCSetamBlock09,StojnicBlockasymldpfinn15,StojnicLiftStrSec13,StojnicICASSP10knownsupp,StojnicTowBettCompSens13}.
\end{proof}
As in Section \ref{sec:ldp}, our main concern below is the asymptotic regime, the same one as in Theorem \ref{thm:thmweakthrhidpar}. In particular, and following \cite{Stojnicl1RegPosasymldp}, we will be interested in the rate, $I^{(hp)}_{err}(\alpha,\beta)$, at which $P^{(hp)}_{err}$ decays
\begin{equation}\label{eq:ldpasymp1hidpar}
  I^{(hp)}_{err}(\alpha,\beta)\triangleq\lim_{n\rightarrow\infty}\frac{\log{P^{(hp)}_{err}}}{n}.
\end{equation}
Based on Theorem \ref{thm:ldp1hidpar} we have the following LDP type of theorem.
\begin{theorem}
Assume the setup of Theorem \ref{thm:ldp1hidpar}. Further, let integers $m$, $k$, and $n$ be large ($k\leq m\leq n$) such that $\beta=\frac{k}{n}$ and $\alpha=\frac{m}{n}$ are constants independent of $n$. Assume that a pair $(\alpha,\beta)$  is given. Also, assume the following scaling: $c_3\rightarrow c_3\sqrt{n}$ and $\gamma\rightarrow\gamma\sqrt{n}$. Then
\begin{eqnarray}
I^{(p)}_{err}(\alpha,\beta)& \triangleq &\lim_{n\rightarrow\infty}\frac{\log{P^{(hp)}_{err}}}{n}\nonumber \\
& \leq & \min_{c_3\geq 0}\left (-\frac{(c_3)^2}{2}+I_{sph}+\min_{\nu\geq 0,\gamma\geq \frac{c_3}{2}} ((1-\beta(2-\eta))\log{w_1}+(1-\eta)\beta\log{w_2}+\beta\log{w_3}+c_3\gamma)\right )\nonumber \\
& \triangleq &I_{err,u}^{(p,ub)}(\alpha,\beta),
\label{eq:ldpthm2Ierrub1hidpar}
\end{eqnarray}
where
\begin{eqnarray}
I_{sph} &=& \widehat{\gamma}c_3-\frac{\alpha }{2}\log\left (1-\frac{c_3}{2\widehat{\gamma}}\right )\nonumber \\
  \widehat{\gamma} &=& \frac{c_3-\sqrt{(c_3)^2+4\alpha}}{4}\nonumber \\
w_1 &=& \frac{1}{\sqrt{2\pi}}\int_{\bar{h}}e^{-\bar{h}^2/2}e^{c_3\max(|\bar{h}|-\nu,0)^2/4/\gamma}d\bar{h}
  =\frac{e^{\frac{c_3\nu^2/4/\gamma}{1-c_3/2/\gamma}}}{\sqrt{1-c_3/2/\gamma}}\erfc\left (\frac{\nu}{\sqrt{2}\sqrt{1-c_3/2/\gamma}}\right )+\erf\left (\frac{\nu}{\sqrt{2}}\right )\nonumber \\
  w_2 &=& \frac{1}{\sqrt{2\pi}}\int_{\bar{h}}e^{-\bar{h}^2/2}e^{c_3(\bar{h}+\nu)^2/4/\gamma}d\bar{h}
  =\frac{e^{\frac{c_3\nu^2/4/\gamma}{1-c_3/2/\gamma}}}{\sqrt{1-c_3/2/\gamma}}\nonumber \\
  w_3 &=& \frac{1}{\sqrt{2\pi}}\int_{\bar{h}}e^{-\bar{h}^2/2}e^{c_3(\bar{h})^2/4/\gamma}d\bar{h}
  =\frac{1}{\sqrt{1-c_3/2/\gamma}}.\label{eq:ldpthm2perrub2hidpar}
\end{eqnarray}\label{thm:ldp2hidpar}
\end{theorem}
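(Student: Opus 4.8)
The plan is to obtain Theorem~\ref{thm:ldp2hidpar} as a direct asymptotic evaluation of the finite-dimensional bound in Theorem~\ref{thm:ldp1hidpar}, following verbatim the template that took Theorem~\ref{thm:ldp1} to Theorem~\ref{thm:ldp2} (which in turn mirrors \cite{Stojnicl1RegPosasymldp}). First I would start from the inequality
\begin{equation*}
P^{(hp)}_{err}\leq \min_{c_3\geq 0}\left (e^{-\frac{c_3^2}{2}}\frac{1}{\sqrt{2\pi}^m}\int_{\g}e^{-\sum_{i=1}^{m}\g_i^2/2-c_3\|\g\|_2}d\g \min_{\nu\geq 0,\gamma\geq\frac{c_3}{2}} w_1^{n-k(2-\eta)}w_2^{k(1-\eta)}w_3^{k}e^{c_3\gamma}\right ),
\end{equation*}
take logarithms, divide by $n$, and pass to the limit using $\beta=k/n$, $\alpha=m/n$. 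The exponents $n-k(2-\eta)$, $k(1-\eta)$, $k$ become $(1-\beta(2-\eta))$, $(1-\eta)\beta$, $\beta$ respectively, which is exactly the only place where the hidden variant differs from the plain partial $\ell_1$ case: there, the triple was $(1-\beta,(1-\eta)\beta,\eta\beta)$. Everything else in the passage to the limit — in particular the spherical integral term — is literally unchanged.

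The key steps, in order, are: (i) apply the stated scaling $c_3\rightarrow c_3\sqrt{n}$, $\gamma\rightarrow\gamma\sqrt{n}$, so that the Gaussian (spherical) integral $\frac{1}{\sqrt{2\pi}^m}\int_{\g}e^{-\|\g\|_2^2/2-c_3\sqrt{n}\|\g\|_2}d\g$ concentrates; a standard Laplace/saddle-point evaluation gives $\frac{1}{n}\log(\cdot)\rightarrow I_{sph}=\widehat\gamma c_3-\frac{\alpha}{2}\log(1-\frac{c_3}{2\widehat\gamma})$ with $\widehat\gamma=\frac{c_3-\sqrt{c_3^2+4\alpha}}{4}$, which is the optimizing value obtained by differentiating in the (rescaled) auxiliary variable — this piece is quoted from \cite{Stojnicl1RegPosasymldp} and carries over unchanged since it depends only on $\alpha$ and $c_3$. (ii) Note $w_1,w_2,w_3$ from (\ref{eq:ldpthm1perrub2hidpar}) are exactly the same functions of $c_3,\nu,\gamma$ as in Theorem~\ref{thm:ldp1}; after the scaling they are $O(1)$, so $\frac{1}{n}\log(w_1^{n-k(2-\eta)}w_2^{k(1-\eta)}w_3^{k})\rightarrow (1-\beta(2-\eta))\log w_1+(1-\eta)\beta\log w_2+\beta\log w_3$. (iii) Collect the $-\frac{c_3^2}{2}$ term, the $c_3\gamma$ term, $I_{sph}$, and the three $\log w_i$ terms; since $\frac{1}{n}\log\min(\cdot)=\min\frac{1}{n}\log(\cdot)$ for finitely parametrized families and the $\limsup$ of a minimum is bounded by the minimum of the $\limsup$s, one gets the claimed inequality for $I^{(hp)}_{err}(\alpha,\beta)$ with the $\min_{c_3\geq0}\min_{\nu\geq0,\gamma\geq c_3/2}$ outside. (iv) Record the closed forms of $w_1,w_2,w_3$ exactly as in (\ref{eq:ldpthm1perrub2hidpar}) to complete the statement.

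The only genuine obstacle is bookkeeping: making sure the combinatorial exponents $1-\beta(2-\eta)$, $(1-\eta)\beta$, $\beta$ are assigned to the correct $w_i$. The assignment is dictated by the structure of $w(\h,\Sw^{(hp)})$ in (\ref{eq:ldpwhSw}): the $\max(\hw_i-\nu,0)^2$ block runs over the $n-k-(1-\eta)k=n-k(2-\eta)$ truly-free coordinates (pairing with $w_1$); the $(\hw_i+\nu)^2$ block runs over the $(1-\eta)k$ coordinates indexed $n-k+1$ through $n-\eta k$ (pairing with $w_2$); and the plain $(\hw_i)^2$ block now has size $\eta k+(1-\eta)k=k$, since it collects both the $\eta k$ a priori-correct nonzero coordinates and the $(1-\eta)k$ coordinates of $\kappa$ that are \emph{not} in $supp(\x)$ (pairing with $w_3$). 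Everything else is a mechanical repetition of the argument already carried out for Theorem~\ref{thm:ldp2}, so I would simply remark that the proof "follows in a fashion analogous to the one employed in \cite{Stojnicl1RegPosasymldp}" and in Theorem~\ref{thm:ldp2}, with the exponents adjusted as above.

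\begin{proof} Follows in a fashion analogous to the one employed in \cite{Stojnicl1RegPosasymldp} and in the proof of Theorem \ref{thm:ldp2}, with the combinatorial exponents of $w_1$, $w_2$, and $w_3$ adjusted according to the block structure of $w(\h,\Sw^{(hp)})$ in (\ref{eq:ldpwhSw}).
\end{proof}
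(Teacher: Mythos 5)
Your proposal is correct and follows essentially the same route as the paper, which proves Theorem \ref{thm:ldp2hidpar} by the same asymptotic evaluation used for Theorem \ref{thm:ldp2} (i.e.\ the machinery of \cite{Stojnicl1RegPosasymldp}) applied to the finite-dimensional bound of Theorem \ref{thm:ldp1hidpar}; your exponent bookkeeping $\lp 1-\beta(2-\eta),(1-\eta)\beta,\beta\rp$ for $w_1,w_2,w_3$ matches the block structure in (\ref{eq:ldpwhSw}) exactly. The only difference is that you spell out the Laplace/limit steps that the paper leaves implicit, which changes nothing of substance.
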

\begin{proof} As Theorem \ref{thm:ldp2}, follows in a fashion analogous to the one employed in \cite{Stojnicl1RegPosasymldp}.
\end{proof}
Now one could repeat all the arguments after Theorem \ref{thm:ldp2}. There is no need to do that though after one observes that the change $\beta\leftarrow (2-\eta)\beta$ and $\eta\leftarrow \frac{1}{2-\eta}$ transforms the above LDP characterization of the hidden partial $\ell_1$ PT into the one given for the partial $\ell_1$ in Theorem \ref{thm:ldp2}. One then automatically arrives to the following hidden partial analogue of Theorem \ref{thm:finalldpl1}.
\begin{theorem}[Hidden partial $\ell_1$'s LDP]
Assume the setup of Theorems \ref{thm:ldp1hidpar} and \ref{thm:ldp2hidpar} with $\eta^{(hp)}$ as the cardinality of set $K\cap \kappa$ and assume that a pair $(\alpha,\beta^{(hp)})$ is given. Let $P^{(hp)}_{err}$ be the probability that the solutions of (\ref{eq:l0}) and (\ref{eq:l1imphidden}) coincide and let $P_{cor}$ be the probability that the solutions of (\ref{eq:l0}) and (\ref{eq:l1imphidden}) do \emph{not} coincide. Set $\beta\leftarrow (2-\eta^{(hp)})\beta^{(hp)}$ and $\eta\leftarrow \frac{1}{2-\eta^{(hp)}}$ and let $\alpha_w$ and $\beta_w$ satisfy the \textbf{\emph{partial} $\ell_1$'s fundamental PT} characterizations from (\ref{eq:thmfinalldpl11}), and let $\beta_1$ and $\beta_0$ satisfy the \textbf{\emph{partial} $\ell_1$'s fundamental LDP} characterizations from (\ref{eq:thmfinalldpl12a}), and (\ref{eq:thmfinalldpl12b}). Also, for such $\beta_1$ and $\beta_0$ let $I^{(hp)}_{ldp}(\alpha,\beta)$ be defined through the \textbf{partial $\ell_1$'s fundamental LDP rate function} characterization from (\ref{eq:thmfinalldpl13}). Then if $\alpha>\alpha_w$
\begin{equation}
I^{(hp)}_{err}(\alpha,\beta^{(hp)})\triangleq\lim_{n\rightarrow\infty}\frac{\log{P^{(hp)}_{err}}}{n}=I^{(hp)}_{ldp}(\alpha,\beta).\label{eq:thmfinalldpl14hidpar}
\end{equation}
Moreover, if $\alpha<\alpha_w$
\begin{equation}
I^{(hp)}_{cor}(\alpha,\beta^{(hp)})\triangleq\lim_{n\rightarrow\infty}\frac{\log{P^{(hp)}_{cor}}}{n}=I^{(hp)}_{ldp}(\alpha,\beta).\label{eq:thmfinalldpl15hidpar}
\end{equation}
Additionally, for $\beta_1$ and $\beta_0$ as above, the choice for $\nu$, $c_3$, and $\gamma$ that achieves the optimal value of the optimization problem on the right hand side of (\ref{eq:ldpthm2Ierrub1hidpar}) is as in (\ref{eq:ldpthm3perrub2}).
\label{thm:finalldpl1hidpar}
\end{theorem}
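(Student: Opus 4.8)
The plan is to avoid redoing any of the analysis of Sections~\ref{sec:ldp} and~\ref{sec:hdg} and instead to observe that the \emph{hidden} partial $\ell_1$ problem is, at the level of every optimization problem and every characterizing equation entering Theorems~\ref{thm:ldp3} and~\ref{thm:finalldpl1}, identical to the ordinary partial $\ell_1$ problem once one performs the change of variables $\eta\leftarrow\frac{1}{2-\eta^{(hp)}}$ and $\beta\leftarrow(2-\eta^{(hp)})\beta^{(hp)}$. The theorem then follows by invoking Theorem~\ref{thm:finalldpl1} at these substituted parameters, exactly as the remark after Theorem~\ref{thm:ldp2hidpar} already anticipates.

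First I would record the elementary identities this substitution produces. With $\eta=\frac{1}{2-\eta^{(hp)}}$ and $\beta=(2-\eta^{(hp)})\beta^{(hp)}$ one has $1-\beta=1-(2-\eta^{(hp)})\beta^{(hp)}$, $\beta(1-\eta)=(1-\eta^{(hp)})\beta^{(hp)}$, $\eta\beta=\beta^{(hp)}$, and $\alpha-\eta\beta=\alpha-\beta^{(hp)}$. I would then compare the upper-bound optimization of Theorem~\ref{thm:ldp2hidpar} with that of Theorem~\ref{thm:ldp2}: the quantities $I_{sph}$, $\widehat{\gamma}$, $w_1$, $w_2$, $w_3$ are literally the same functions of $(\alpha,c_3,\nu,\gamma)$ in both, and the summands $-\frac{c_3^2}{2}$, $I_{sph}$, $c_3\gamma$ together with the constraint $\gamma\geq\frac{c_3}{2}$ involve neither $\beta$ nor $\eta$; the only $\beta$- and $\eta$-dependent part is the combination $(1-\beta)\log w_1+\beta(1-\eta)\log w_2+\eta\beta\log w_3$, which by the identities above equals the hidden combination $(1-(2-\eta^{(hp)})\beta^{(hp)})\log w_1+(1-\eta^{(hp)})\beta^{(hp)}\log w_2+\beta^{(hp)}\log w_3$ term by term. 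Hence the two optimization problems coincide, so the entire stationarity computation of Section~\ref{sec:analysisIerr} (equations~(\ref{eq:detanalIeer1})--(\ref{eq:detanalIeer10f})) carries over verbatim with $\beta,\eta$ replaced by their hidden values, and the optimizers emerge in the form~(\ref{eq:ldpthm3perrub2}).

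Next I would verify that the characterizing equations transform coherently. The partial $\ell_1$ PT condition~(\ref{eq:thmfinalldpl11}) turns into the hidden PT condition~(\ref{eq:thmweaktheta2hidpar}) (using $1-\beta_w=1-(2-\eta^{(hp)})\beta_w^{(hp)}$ and $\alpha_w-\eta\beta_w=\alpha_w-\beta_w^{(hp)}$), and the two fundamental LDP equations~(\ref{eq:thmfinalldpl12a}),~(\ref{eq:thmfinalldpl12b}) and the rate function~(\ref{eq:thmfinalldpl13}) reference $\beta,\eta$ only through $1-\beta$, $\alpha-\eta\beta$, $\alpha-\beta$, $\beta(1-\eta)$, each of which maps to the intended hidden counterpart. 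Thus $\beta_0,\beta_1$ for the hidden problem are exactly the partial $\ell_1$ $\beta_0,\beta_1$ at the substituted parameters, and $I^{(hp)}_{ldp}(\alpha,\beta^{(hp)})=I^{(p)}_{ldp}(\alpha,(2-\eta^{(hp)})\beta^{(hp)})$. To upgrade the upper bound to the equalities~(\ref{eq:thmfinalldpl14hidpar}) and~(\ref{eq:thmfinalldpl15hidpar}) I would run the high-dimensional geometry argument of Section~\ref{sec:hdg}: the identities~(\ref{eq:hdg1}) and~(\ref{eq:hdg1a}) of~\cite{StojnicEquiv10} apply to the hidden cone $\Sw^{(hp)}$, the three pieces $\psicom$, $\psiint$, $\psiext$ are obtained from~(\ref{eq:hdg2}) by the same substitution, and the chain~(\ref{eq:hdg3})--(\ref{eq:hdg15}) then reproduces $\Psi^{(p)}_{net}=I^{(p)}_{ldp}$ verbatim in both tails.

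The only point that genuinely requires care — and which I expect to be the main obstacle — is checking that the geometric decomposition~(\ref{eq:hdg1})--(\ref{eq:hdg2}) really is the correct one for the hidden cone $\Sw^{(hp)}$, i.e. that the $(1-\eta^{(hp)})k$ coordinates of $\kappa$ that are \emph{not} in $supp(\x)$ enter the integral-geometry count exactly as unpenalized, free-sign directions do. This is precisely the structural fact that makes $\eta\leftarrow\frac{1}{2-\eta^{(hp)}}$, $\beta\leftarrow(2-\eta^{(hp)})\beta^{(hp)}$ the right change of variables (the effective sparsity $(2-\eta^{(hp)})\beta^{(hp)}$ being the true nonzeros together with the false positives inside $\kappa$, and the effective ``known'' count $\eta\cdot(2-\eta^{(hp)})\beta^{(hp)}=\beta^{(hp)}$ accounting for the whole set $\kappa$ being dropped from the penalty); once this is granted — as it is by the analyses of~\cite{StojnicTowBettCompSens13,StojnicEquiv10} — everything else is bookkeeping.
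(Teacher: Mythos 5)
Your proposal is correct and is exactly the argument the paper uses: the paper's proof of Theorem \ref{thm:finalldpl1hidpar} consists precisely of observing that the substitution $\beta\leftarrow(2-\eta^{(hp)})\beta^{(hp)}$, $\eta\leftarrow\frac{1}{2-\eta^{(hp)}}$ maps the hidden partial $\ell_1$ optimization of Theorem \ref{thm:ldp2hidpar} term by term onto the partial $\ell_1$ optimization of Theorem \ref{thm:ldp2} and then invoking Theorem \ref{thm:finalldpl1}. Your explicit verification of the identities $\eta\beta=\beta^{(hp)}$, $\beta(1-\eta)=(1-\eta^{(hp)})\beta^{(hp)}$, $1-\beta=1-(2-\eta^{(hp)})\beta^{(hp)}$ is a useful (and correct) expansion of what the paper leaves implicit.
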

\begin{proof} Follows immediately from Theorem \ref{thm:finalldpl1} after noting the change $\beta\leftarrow (2-\eta^{(hp)})\beta^{(hp)}$ and $\eta\leftarrow \frac{1}{2-\eta^{(hp)}}$.
\end{proof}
One can scale back the above results through $\beta^{(hp)}_1=\frac{\beta_1}{2-\eta^{(hp)}}$ and $\beta^{(hp)}_0=\frac{\beta_0}{2-\eta^{(hp)}}$ and the proper adjustment for $\nu$, $c_3$, $\gamma$, and $I^{(hp)}_{ldp}(\alpha,\beta)$. In the following section we present the results that one finally obtains after such an adjustment.

\subsection{Theoretical and numerical LDP results --  hidden partial $\ell_1$}
\label{sec:thnumresutshidpar}

As mentioned above, in this section we finally provide the LDP results that can be obtained based on Theorem \ref{thm:finalldpl1hidpar}. These results are the hidden partial $\ell_1$ analogues to the results that we presented in Section \ref{sec:thnumresuts}. The theoretical LDP rate function curves that one can obtain for two different values of $\beta$ based on Theorem \ref{thm:finalldpl1hidpar} are shown in Figure \ref{fig:l1regldpIerrubhidpar}. We also supplement this figure with Table \ref{tab:Ildptab1hidpar} where the numerical values for all the quantities of interest in Theorem \ref{thm:finalldpl1hidpar} are shown for several $\alpha$'s from the transition zone (here the transition zones are around $\alpha$'s obtained for $\beta^{(hp)}=0.27153$ and $\beta^{(hp)}=\frac{1}{3}$; $\beta^{(hp)}=0.27153$ is chosen as the breaking point/threshold from the hidden partial $\ell_1$ PT curve for $\alpha=0.5$). Finally, in Figure \ref{fig:weakl1LDPthrsimhidpar} and Table \ref{tab:Ildptab2hidpar} we show the comparison between the simulated values and the theoretical ones. As was the case for the partial $\ell_1$ in Section \ref{sec:thnumresuts}, here we again observe that even for fairly small dimensions one already approaches the theoretical curves (derived of course assuming an infinite dimensional asymptotic regime).

\begin{figure}[htb]
\begin{minipage}[b]{.5\linewidth}
\centering
\centerline{\epsfig{figure=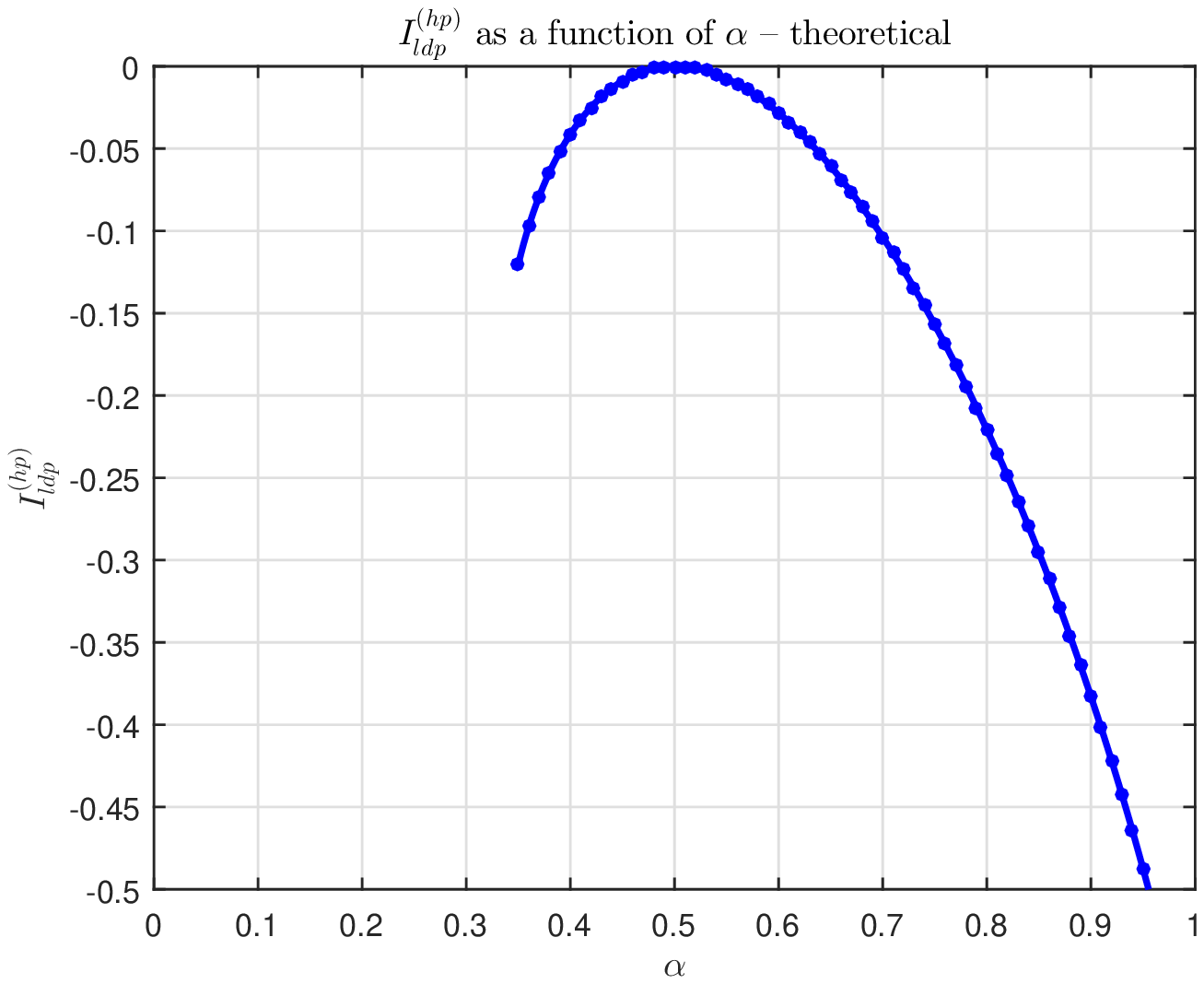,width=9cm,height=7cm}}
\end{minipage}
\begin{minipage}[b]{.5\linewidth}
\centering
\centerline{\epsfig{figure=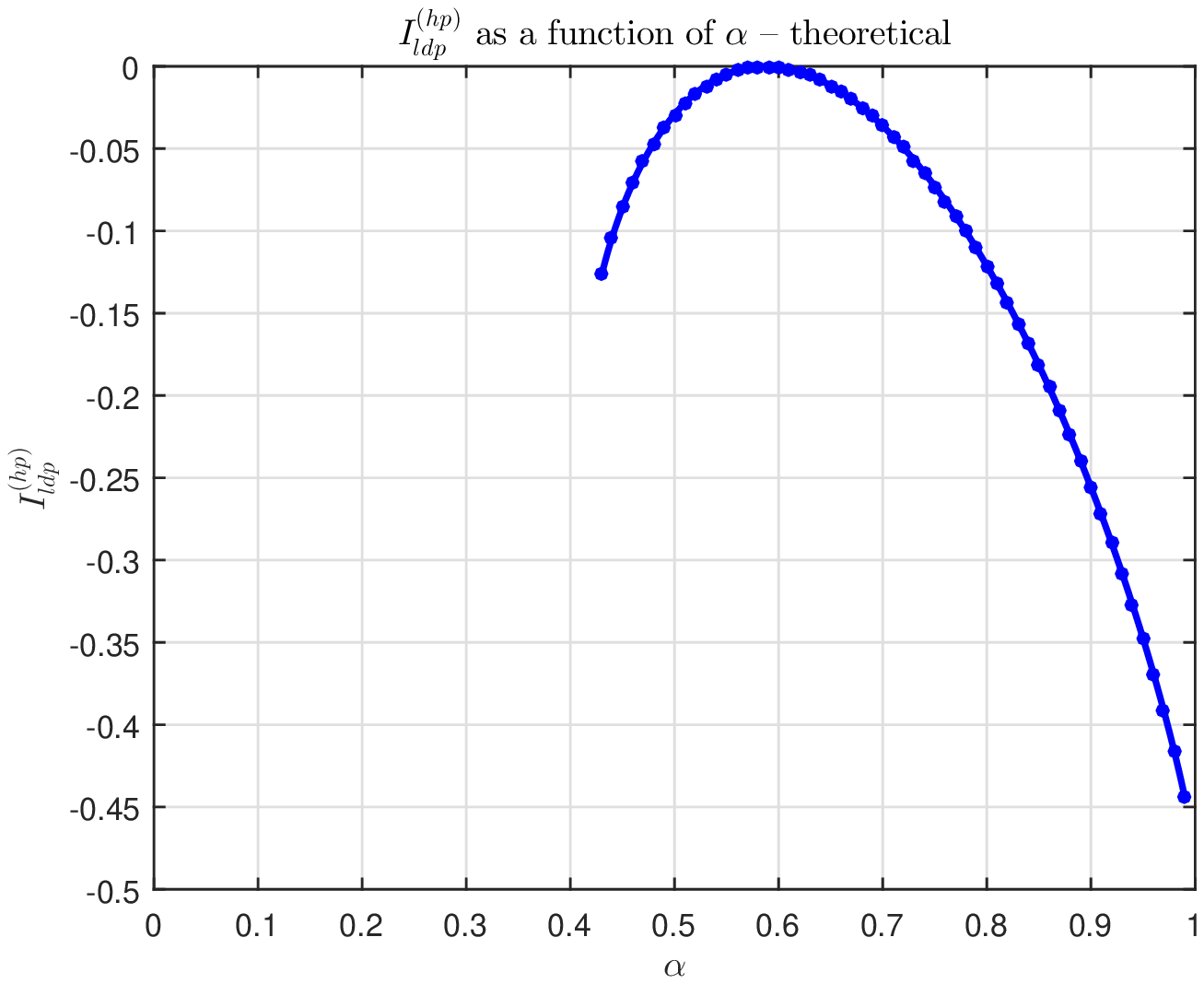,width=9cm,height=7cm}}
\end{minipage}
\caption{$I^{(hp)}_{ldp}$ as a function of $\alpha$ for $\eta^{(hp)}=0.75$; left -- $\beta^{(hp)}=0.27153$; right -- $\beta^{(hp)}=\frac{1}{3}$}
\label{fig:l1regldpIerrubhidpar}
\end{figure}

\begin{table}[h]
\caption{A collection of values for $\beta^{(hp)}_1$, $\beta^{(hp)}_0$, $\nu$, $A_0$, $c_3$, $\gamma$, and $I^{(hp)}_{ldp}(\alpha,(2-\eta^{(hp)})\beta^{(hp)})$ in Theorems \ref{thm:ldp2hidpar} and \ref{thm:finalldpl1hidpar}; $\beta^{(hp)}=0.27153$, $\eta^{(hp)}=0.75$}\vspace{.1in}
\hspace{-0in}\centering
\begin{tabular}{||c||c|c|c|c|c||}\hline\hline
$\alpha$ & $ 0.40 $ & $ 0.45 $ & $ 0.50 $ & $ 0.55 $ & $ 0.60 $ \\ \hline\hline
$\beta^{(hp)}_1$& $ 0.2410 $ & $ 0.2551 $ & $ 0.2715 $ & $ 0.2905 $ & $ 0.3122 $ \\ \hline
$\beta^{(hp)}_0$& $ -0.3336 $ & $ 0.0988 $ & $ 0.2715 $ & $ 0.3715 $ & $ 0.4422 $ \\ \hline\hline
$\nu$    & $ 1.4710 $ & $ 1.3030 $ & $ 1.1673 $ & $ 1.0507 $ & $ 0.9464 $ \\ \hline
$A_0$    & $ 2.6341 $ & $ 1.4612 $ & $ 1.0000 $ & $ 0.7474 $ & $ 0.5848 $ \\ \hline
$c_3$    & $ -1.4259 $ & $ -0.5211 $ & $ -0.0000 $ & $ 0.4379 $ & $ 0.8715 $ \\ \hline
$\gamma$ & $ 0.1201 $ & $ 0.2295 $ & $ 0.3535 $ & $ 0.4961 $ & $ 0.6623 $ \\ \hline\hline
$I^{(hp)}_{ldp}(\alpha,(2-\eta^{(hp)})\beta^{(hp)})$& $ \mathbf{-0.0413} $ & $ \mathbf{-0.0090} $ & $ \mathbf{-0.0000} $ & $ \mathbf{-0.0075} $ & $ \mathbf{-0.0284} $ \\ \hline\hline
\end{tabular}
\label{tab:Ildptab1hidpar}
\end{table}

\begin{figure}[htb]
\centering
\centerline{\epsfig{figure=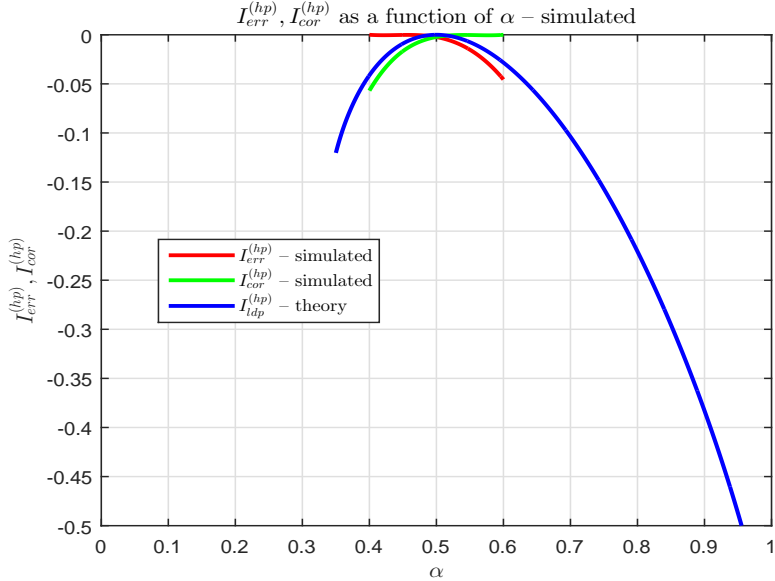,width=11.5cm,height=8cm}}
\caption{Hidden partial $\ell_1$'s weak LDP rate function -- theory and simulation; $\beta^{(hp)}=0.27153$, $\eta^{(hp)}=0.75$}
\label{fig:weakl1LDPthrsimhidpar}
\end{figure}

\begin{table}[h]
\caption{$I^{(hp)}_{err}(\alpha,\beta^{(hp)})$, $I^{(hp)}_{err}(\alpha,\beta^{(hp)})$ -- simulated; $I^{(hp)}_{ldp}(\alpha,(2-\eta^{(hp)})\beta^{(hp)})$ calculated for $\beta^{(hp)}=0.27153$ and $\eta^{(hp)}=0.75$}\vspace{.1in}
\hspace{-0in}\centering
\begin{tabular}{||c||c|c|c|c|c||}\hline\hline
$\alpha$ & $ 0.40 $ & $ 0.45 $ & $ 0.50 $ & $ 0.55 $ & $ 0.60 $ \\ \hline\hline
$\eta^{(hp)} k$ & $ 20 $ & $ 40 $ & $ 60 $ & $ 40 $ & $ 25 $ \\ \hline
$k$      & $ 27 $ & $ 54 $ & $ 81 $ & $ 54 $ & $ 34 $ \\ \hline
$m$      & $ 40 $ & $ 90 $ & $ 150 $ & $ 110 $ & $ 75 $ \\ \hline
$n$      & $ 100 $ & $ 200 $ & $ 300 $ & $ 200 $ & $ 125 $ \\ \hline\hline
$I^{(hp)}_{err}(\alpha,\beta^{(hp)})$ -- simulated & $ -0.0000 $ & $ -0.0002 $ & \red{$ \mathbf{-0.0023} $} & \red{$ \mathbf{-0.0153} $} & \red{$ \mathbf{-0.0456} $} \\ \hline
$I^{(hp)}_{cor}(\alpha,\beta^{(hp)})$ -- simulated & \gr{$ \mathbf{-0.0569} $} & \gr{$ \mathbf{-0.0167} $} & \gr{$ \mathbf{-0.0023} $} & $ -0.0002 $ & $ -0.0000 $ \\ \hline\hline
$I^{(hp)}_{ldp}(\alpha,(2-\eta^{(hp)})\beta^{(hp)})$ -- theory & \bl{$ \mathbf{-0.0413} $} & \bl{$ \mathbf{-0.0090} $} & \bl{$ \mathbf{-0.0000} $} & \bl{$ \mathbf{-0.0075} $} & \bl{$ \mathbf{-0.0284} $} \\ \hline\hline
\end{tabular}
\label{tab:Ildptab2hidpar}
\end{table}

\section{Conclusion}
\label{sec:conc}

This paper revisits random linear systems and their solving through the standard $\ell_1$ heuristic. It does so by considering two modifications of the standard $\ell_1$ (to which we referred as the partial and the hidden partial $\ell_1$). In addition to being of independent interest in certain practical scenarios these modifications have been known for a while as paths that could sometimes lead towards new algorithms potentially even capable of outperforming the standard $\ell_1$ heuristic. After briefly revisiting the standard phase transition characterizations of these modifications we proceed by providing a much deeper understanding of these phenomena by connecting them to the large deviations principles from the classical probability theory. A collection of novel probabilistic techniques that we introduced turned out to be very powerful and enabled us to fully characterize the large deviations while maintaining the elegance that we achieved earlier in phase transitions characterizations.

In addition to the above mentioned probabilistic analysis, we also conducted a high-dimensional geometry type of analysis and showed that one obtains exactly the same results pursuing both of these different mathematical paths. Finally, we presented quite a few numerical results that are in a very good agreement with all of our theoretical/mathematically rigorous predictions/results (in fact, the simulated results indicate that this already happens for systems of rather small dimensions of order of few hundreds which is perhaps somewhat surprising given that the theoretical results, by the definitions of the LDPs, assume systems of very large, basically infinite, dimensions). Of course, there are many opportunities to continue further and consider various other aspects of the algorithms/problems at hand. One typically needs a bit of cosmetic adjustments of the techniques introduced here and in a few of our earlier works so that they fit those problems as well. The simplifications of the arguments that we managed to achieve here makes these adjustments fairly routine tasks and, for a selected collection of related problems that we view as of particular interest, we will present them in several companion papers.

\begin{singlespace}
\bibliographystyle{plain}
\bibliography{l1hidparldpasym1Refs}
\end{singlespace}

\end{document}